\newtheorem{theorem}{Theorem}[section]
\newtheorem{lemma}{Lemma}[section]
\newtheorem{corollary}{Corollary}
\date{}
\begin{document}

\title{A fast second-order implicit difference method for time-space
fractional advection-diffusion equation}

\author{Yong-Liang Zhao$^{a}$ \thanks{\textit{E-mail address:} uestc\_ylzhao@sina.com},
Ting-Zhu Huang$^{a}$ \thanks{Corresponding author. \textit{E-mail address:}
tingzhuhuang@126.com. Tel: +86 28 61831608, fax: +86 28 61831280.},
Xian-Ming Gu$^{b}$ \thanks{\textit{E-mail address:} guxianming@live.cn, x.m.gu@rug.nl},
Wei-Hua Luo$^{c}$ \thanks{\textit{E-mail address:} huaweiluo2012@163.com}\\
{\small{\it a. School of Mathematical Sciences,}}\\
{\small{\it University of Electronic Science and Technology of China,}}\\
{\small{\it Chengdu, Sichuan 611731, P.R. China}}\\
{\small{\it b. School of Economic Mathematics,}}\\
{\small{\it Southwestern University of Finance and Economics,}}\\
{\small{\it Chengdu, Sichuan 611130, P.R. China}}\\
{\small{\it c. Data Recovery Key Laboratory of Sichuan Province,}}\\
{\small{\it Neijiang Normal University, Neijiang, Sichuan, 641100, P.R. China}}\\
}

\maketitle

\numberwithin{equation}{section}
\begin{abstract}
In this paper, we consider a fast and second-order implicit difference method to
approximate a class of linear time-space fractional variable coefficients advection-diffusion equation.
To begin with, we construct an implicit difference scheme based on $L2$-$1_{\sigma}$ formula
[Alikhanov AA. 2015;280:424-38.] for the temporal discretization
and weighted and shifted Gr\"{u}nwald method for the spatial discretization.
Then, unconditional stability of the implicit difference scheme is proved, and
we theoretically and numerically show that it converges in the $L_2$-norm with the optimal order
$\mathcal{O}(\tau^2 + h^2)$ with time step $\tau$ and mesh size $h$.
Moreover, we utilize the same measure to solve the nonlinear case of this problem.
For purpose of effectively solving the discretized systems with the Toeplitz matrix,
two fast Krylov subspace solvers with suitable circulant preconditioners are designed.
In each iterative step, these methods reduce the storage requirements of the resulting
equations from $\mathcal{O}(N^2)$ to $\mathcal{O}(N)$ and the computational complexity from
$\mathcal{O}(N^3)$ to $\mathcal{O}(N \log N)$, where $N$ is the number of grid nodes.
Numerical experiments are carried out to demonstrate that these methods are more practical
than the traditional direct solvers of the implicit difference methods,
in terms of memory requirement and calculation time.

\textbf{Key words}: Linear/Nonlinear fractional advection-diffusion equation; Weighted and shifted Gr\"{u}nwald scheme;
$L2$-$1_{\sigma}$ formula; Krylov subspace method; Toeplitz matrix; Fast Fourier transform; Circulant preconditioner.


\end{abstract}
\section{Introduction}
\label{sec1}\quad\

This manuscropt is concerned with a fast second-order implicit difference method (IDM)
for solving the initial-boundary value problem of
the time-space fractional advection-diffusion equation (TSFADE)
\cite{liu2007stability,zhao2016preconditioned}:
\begin{equation}
\begin{cases}
\begin{split}
D_{0,t}^{\theta} u(x,t) =
& - d_{+}(t) D_{0,x}^{\alpha} u(x,t) - d_{-}(t) D_{x,L}^{\alpha} u(x,t) \\
& + e_{+}(t) D_{0,x}^{\beta} u(x,t) + e_{-}(t) D_{x,L}^{\beta} u(x,t) + f(x,t),
\end{split}\\
u(x,0) = u_{0}(x), \qquad\qquad\qquad 0 \leq x \leq L,\\
u(0,t) = u(L,t) = 0, \qquad\qquad 0 \leq t \leq T,
\end{cases}
\label{eq1.1}
\end{equation}
where $\theta, \alpha \in (0,1]$, $\beta \in (1,2]$, $0 < x < L$, and $0 < t \leq T$.
Here, the parameters $\theta, \alpha$ and $\beta$ are the order of the TSFADE,
$f(x,t)$ is the source term,
and diffusion coefficient functions $d_{\pm}(t)$ and $e_{\pm}(t)$ are
non-negative under the assumption
that the flow is from left to right.
The TSFADE (\ref{eq1.1}) can be regarded as generalizations of
classical advection-diffusion equations with
the first-order time derivative replaced by the Caputo fractional derivative
of order $\theta \in (0,1]$, and
the first-order and second-order space derivatives replaced
by the two-sided Riemann-Liouville fractional derivatives of order
$\alpha \in (0,1]$ and $\beta \in(1,2]$.
Namely, the time fractional derivative in (\ref{eq1.1}) is
the Caputo fractional derivative\cite{podlubny1998fractional, samko1993fractional} of order $\theta$
denoted by
\begin{equation}
D_{0,t}^{\theta} u(x,t) = \frac{1}{\Gamma(1 - \theta)}
\int_{0}^{t} \frac{\partial u(x,\xi)}{\partial \xi} \frac{d \xi}{(t - \xi)^{\theta}},
\label{eq1.2}
\end{equation}
and the left ($D_{0,x}^{\alpha}$, $D_{0,x}^{\beta}$) and
right ($D_{x,L}^{\alpha}$, $D_{x,L}^{\beta}$) space fractional derivatives in (\ref{eq1.1})
are the Riemann-Liouville fractional derivatives\cite{podlubny1998fractional, samko1993fractional}
 defined as

(1) left Riemann-Liouville fractional derivative:
\begin{equation}
D_{0,x}^{\gamma} u(x,t) = \frac{1}{\Gamma(n - \gamma)} \frac{d^{n}}{dx^{n}}
\int_{0}^{x} \frac{u(\eta,t)}{(x - \eta)^{\gamma - n + 1}} d \eta,
\label{eq1.3}
\end{equation}

(2) right Riemann-Liouville fractional derivative:
\begin{equation}
D_{x,L}^{\gamma} u(x,t) =  \frac{(-1)^{n}}{\Gamma(n - \gamma)} \frac{d^{n}}{dx^{n}}
\int_{x}^{L} \frac{u(\eta,t)}{(\eta - x)^{\gamma - n + 1}} d \eta,
\label{eq1.4}
\end{equation}
where $\Gamma(\cdot)$ denotes the Gamma function and $n - 1 \leq \gamma  < n$ ($n$ is a positive integer).
In reality, when $\theta = \alpha = 1$ and $\beta = 2$,
the above equation (\ref{eq1.1}) reduces to the classical advection-diffusion equation (ADE).

There have been many studies and applications of the fractional ADE.
It should be point out that the fractional ADE provides more adequate and accurate description of
the movement of solute in an
aquifer than traditional second-order ADEs do
\cite{benson2001fractional,benson2000fractional}, and
also used to approach the description of transport dynamics
in complex systems governed by anomalous diffusion
and non-exponential relaxation patterns\cite{metzler2000random}.
Moreover, the fractional ADE is a more suitable model for many problems,
such as engineering, chemistry, entropy\cite{yuriy2015generalization}
and hydrology\cite{benson2000appl}.
To obtain the analytical solutions of fractional partial differential equations
\cite{podlubny1998fractional}, numerous analytical methods,
such as the Fourier transform method,
Adomian¡¯s decomposition method,
the Laplace transform method,
shifted Legendre polynomials\cite{abb2015appl}
and the Mellin transform method,
have been developed.
Since there are very few cases
in which the closed-form analytical solutions are available,
or the obtained analytical solutions are less practical
(expressed by the transcendental functions or infinite series).
Researches on numerical approximation and techniques for
the solution of fractional difference equations (FDEs) have attracted intensive interest;
see \cite{vj2007numerical,vj2001numerical,liu2004numerical,
mm2004finite,mm2006finite,tadjeran2006finite,wang2010direct, luo2016quadratic, gu2015strang}
and references therein.

Regarding numerical methods for fractional advection-diffusion problems.
In the last years,
most early established numerical methods are developed for handling
the space or time fractional ADE.
For the time fractional ADE, many early implicit numerical methods are derived by
the combination of the $L1$ approximate formula\cite{wei2013analysis, zhuang2011time}.
These methods are unconditionally stable, but the time accuracy can not meet second order.
In addition, some other related numerical methods have already been proposed for
handling the time fractional ADE; see
\cite{fu2013boundary, zhai2014uncondition, wang2015compact} for details.
On the other hand, for the space fractional ADE, many researchers exploited the conventional
shifted Gr\"{u}nwald discretization\cite{liu2012numerical} and the implicit Euler (or Crank-Nicolson)
time-step discretization for two-sided Riemann-Liouville fractional derivatives and the first order
time derivative, respectively. Later, Sousa and Li\cite{li2015weighted} derived
a weighted finite difference scheme for producing the novel numerical methods,
which achieved the second order accuracy in both time and space for
space fractional ADE. Qu and Lei et al. employed
circulant and skew-circulant splitting iteration,
which could reduce the required algorithm storage, for implementing the above mentioned second
order numerical method; see \cite{qu2014cscs} for details.

Contrarily, although the numerical methods for space or time
fractional ADE were extensively investigated in the past researches,
the work about numerically handling the TSFADE is not too much.
Firstly, Liu et al. \cite{liu2012numerical} and Zhang \cite{zhang2009afinite,zhang2009finitestfde}
worked out a series of studies about constructing the implicit
difference scheme (IDS) for TSFADE, however all these numerical schemes achieve the
convergence with first order accuracy in both space and time from both the theoretical
and numerical perspectives.
Liu et al. \cite{liu2007stability} also considered a space-time fractional advection dispersion equation
and the implicit proposed difference method has convergence $\mathcal{O}(\tau + h)$.
Then, Shen et al.\cite{shen2011numerical} presented an explicit difference approximation
with convergence $\mathcal{O}(\tau + h)$ and
an implicit difference approximation with convergence rate $\mathcal{O}(\tau^{2 - \alpha} + h)$.
Inspired by Liu's paper \cite{liu2007stability},
Zhao et al.\cite{zhao2016preconditioned} introduced preconditioned iterative methods
to reduce the amount of computation while solving TSFADE,
but its convergence can only reach $\mathcal{O}(\tau^{2 - \alpha}+h)$.
Later, Gu et al. \cite{gu2016fast} established fast solution techniques
to solve the discretization linear system of the time-space fractional convection-diffusion equation,
which has convergence rate $\mathcal{O}(\tau^2 + h^2)$.
Furthermore, most of
these methods have no complete theoretical analysis for both stability and convergence,
refer to \cite{wei2012tsades, hejazi2013finite, irandoust2014numerical, parvizi2015numerical}
for details.

Since the fractional differential operator is nonlocal,
it shows that a naive discretization of the FDE,
even though implicit, leads to unconditionally unstable \cite{mm2004finite,mm2006finite}.
Moreover, traditional methods for solving FDEs tend to
generate full coefficient matrices, which require computational cost of $\mathcal{O}(N^3)$
and storage of $\mathcal{O}(N^2)$ \cite{cui2015compact}. There is no doubt that
if $N$ is not small, both calculation time and storage requirements are tremendous,
and this situation is that we try to avoid. To optimize the computational complexity,
Meerschaet and Tadjeran \cite{mm2004finite,mm2006finite} proposed
a shifted Gr\"{u}nwald discretization to approximate FDEs, which has been proved
to be unconditionally stable. Later, Wang, Wang and Sircar \cite{ww2010direct}
discovered that the full coefficient matrix derived by the shifted Gr\"{u}nwald scheme holds
a Toeplitz-like structure. More precisely, this matrix can be written as a sum of
diagonal-multiply-Toeplitz matrices. This means that
the matrix-vector multiplication for Toeplitz matrix can be computed by
the fast Fourier transform (FFT)
with $\mathcal{O}(N \log N)$ operations \cite{ng2001itertoep}, and the storage requirement
reduces to $\mathcal{O}(N)$. Thanks to the Toeplitz-like structure,
Wang and Wang \cite{ww2011fast} employed the CGNR method to solve discretized linear system
of FDE by the Meerschaet-Tadjeran's method, and the cost of each iteration is
$\mathcal{O}(N \log N)$. Numerical experiments indicate that the CGNR method is fast when
the diffusion coefficients are small enough, namely, the discretized system is well-conditioned.
Nevertheless, the discretized system will become ill-conditioned when the diffusion coefficients
are not small, hence the CGNR method converges very slowly.
To overcome this shortcoming, Zhao et al. extended the precondition technique,
for handling the Toeplitz-like matrix with structure as
the sum of diagonal-multiply-Toeplitz matrices \cite{zhao2016preconditioned}.
Their results are related to the promising acceleration of the
convergence of the iterative methods, while solving (\ref{eq1.1}),
see \cite{gu2015cscs, bai2015hshs} for other preconditioning methods.
For the same reason,
we propose two preconditioned iterative methods,
i.e., the preconditioned biconjugate gradient stabilized
(BiCGSTAB) method \cite{van1992bicgstab}
and the preconditioned generalized product-type solvers
based on BiCOR (GPBiCOR($m$, $\ell$)) method \cite{gu2015bicorstab},
and observe results related to the acceleration of the convergence of
the iterative methods to solve (\ref{eq1.1}).

The rest of this paper is arranged as follows. In Section 2,
the implicit difference scheme for (\ref{eq1.1}) is presented,
and we prove that this scheme is unconditionally stable and convergent
with the accuracy of $\mathcal{O}(\tau^2 + h^2)$.
Then, we extend the problem to nonlinear situation through utilizing the same technique.
In Section 3, the BiCGSTAB method and the GPBiCOR($m$, $\ell$) method with
suitable circulant preconditioners are proposed to solve the discretized system.
In Section 4, numerical results are reported to demonstrate the efficiency of our numerical approaches,
and concluding remarks are given in Section 5.

\section{An implicit difference scheme for TSFADE}
\label{sec2}\quad\

In this section, we present an implicit difference method for discretizing
the TSFADE defined by (\ref{eq1.1}).
Unlike the former numerical approaches \cite{liu2007stability,zhao2016preconditioned,shen2011numerical},
we exploit  henceforth two-sided fractional derivatives to approximate
the Riemann-Liouville derivatives in (\ref{eq1.3}) and (\ref{eq1.4}).
we can show that, by two-sided fractional derivatives,
this proposed method is also unconditionally stable and
convergent second order accuracy in both time and space.

\subsection{Discretization of the TSFADE}
\label{sec2.1}\quad\

In order to derive the proposed scheme,
we first introduce the mesh $\bar{\omega}_{h \tau} = \bar{\omega}_{h} \times \bar{\omega}_{\tau}$,
where
$\bar{\omega}_{h} = \{ x_i =ih, i = 0, 1, \cdots, N; x_0 = 0, x_N = L \}$ and
$\bar{\omega}_{\tau} = \{ t_j = j \tau, j = 0, 1, \cdots, M; t_M = T \}$.
Besides, $\bm{v} = \{ v_{i} \mid 0 \leq i \leq N \} $ be any grid function.
Then the following lemma introduced in \cite{Alikhanov2015424}
gives a description on the Caputo fractional derivative discretization.
\begin{lemma}
suppose $0< \theta <1$, $\sigma = 1 - \frac{\theta}{2}$, $y(t) \in \mathcal{C}^3 [0,T]$, and
$t_{j + \sigma} = (j + \sigma) \tau$.
Then
\begin{equation*}
\left |  D_{0,t}^{\theta} y(t)
- \Delta^{\theta}_{0,t_{j + \sigma}} y(t) \right | = \mathcal{O}(\tau^{3 - \theta}),
\end{equation*}
where
\begin{equation}
\Delta^{\theta}_{0,t_{j + \sigma}} y(t) =
\frac{\tau^{-\theta}}{\Gamma(2 - \theta)} \sum\limits^{j}_{s = 0} c^{(\theta,\sigma)}_{j - s}
\big[ y(t_{s + 1}) - y(t_{s}) \big],
\label{eq2.1}
\end{equation}
and for $j = 0$, $c^{(\theta,\sigma)}_{0} = a^{(\theta,\sigma)}_{0}$, for $j \geq 1$,
\begin{equation*}
c^{(\theta,\sigma)}_{s}=
\begin{cases}
a^{(\theta,\sigma)}_{0} + b^{(\theta,\sigma)}_{1}, & s = 0,\\
a^{(\theta,\sigma)}_{s} + b^{(\theta,\sigma)}_{s + 1} - b^{(\theta,\sigma)}_{s}, & 1\leq s \leq j - 1,\\
a^{(\theta,\sigma)}_{j} - b^{(\theta,\sigma)}_{j}, & s = j,
\end{cases}
\end{equation*}
in which
$a^{(\theta,\sigma)}_{0} = \sigma^{1 - \theta}, ~ a^{(\theta,\sigma)}_{l}
= (l + \sigma)^{1 - \theta} - (l - 1 + \sigma)^{1 - \theta},~ for ~ l \geq 1$;
and
$b^{(\theta,\sigma)}_{l} = \frac{1}{2 - \theta}
\Big[(l + \sigma)^{2 - \theta} - (l - 1 + \sigma)^{2 - \theta} \Big]
- \frac{1}{2} \Big[ (l + \sigma)^{1 - \theta} - (l - 1 + \sigma)^{1 - \theta} \Big]$.
\label{lemma2.1}
\end{lemma}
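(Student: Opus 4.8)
The plan is to prove this as a Taylor-expansion estimate for the $L2$-$1_\sigma$ discretization of the Caputo derivative at the non-integer grid point $t_{j+\sigma}$, following the standard piecewise-quadratic interpolation argument. First I would write
\[
D_{0,t}^{\theta} y(t_{j+\sigma})
= \frac{1}{\Gamma(1-\theta)} \int_{0}^{t_{j+\sigma}}
\frac{y'(\xi)}{(t_{j+\sigma}-\xi)^{\theta}}\, d\xi
= \frac{1}{\Gamma(1-\theta)} \sum_{s=0}^{j} \int_{t_s}^{t_{s+1}}
\frac{y'(\xi)}{(t_{j+\sigma}-\xi)^{\theta}}\, d\xi,
\]
where on the last subinterval $[t_j, t_{j+\sigma}]$ the upper limit is $t_{j+\sigma}$ rather than $t_{j+1}$. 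On each of the full subintervals $[t_s,t_{s+1}]$ with $s\le j-1$ I would replace $y$ by its quadratic interpolant through $y(t_{s-1}), y(t_s), y(t_{s+1})$ (a Newton–Gregory-type quadratic), and on the last subinterval $[t_j,t_{j+\sigma}]$ by the linear interpolant through $y(t_j), y(t_{j+1})$. Substituting $y'$ by the derivative of the interpolant, integrating the resulting polynomials against the weakly singular kernel $(t_{j+\sigma}-\xi)^{-\theta}$ in closed form, and collecting the coefficients of each nodal value $y(t_{s+1})-y(t_s)$, one obtains exactly the weights $c^{(\theta,\sigma)}_{j-s}$ in terms of the $a^{(\theta,\sigma)}_l$ (from the linear/constant part of the kernel integral) and $b^{(\theta,\sigma)}_l$ (from the quadratic correction), matching the formulas in the statement. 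This identifies $\Delta^{\theta}_{0,t_{j+\sigma}} y$ with the quantity obtained by exact integration of the interpolant.

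Next I would estimate the interpolation error. On each interior subinterval the quadratic-interpolation remainder for $y'$ is $O(\tau^2 \|y'''\|_\infty)$ pointwise (since $y\in\mathcal{C}^3[0,T]$ means $y'$ has a bounded second derivative and the quadratic interpolant reproduces $y'$ up to its second divided difference); on the final short subinterval $[t_j,t_{j+\sigma}]$, whose length is $\sigma\tau\le\tau$, the linear-interpolation remainder for $y'$ is $O(\tau^2\|y''\|_\infty)$. Multiplying these pointwise remainders by the kernel and integrating,
\[
\bigl| D_{0,t}^{\theta} y(t_{j+\sigma}) - \Delta^{\theta}_{0,t_{j+\sigma}} y(t_{j+\sigma}) \bigr|
\le \frac{C}{\Gamma(1-\theta)} \sum_{s=0}^{j} \tau^{2}
\int_{t_s}^{t_{s+1}} \frac{d\xi}{(t_{j+\sigma}-\xi)^{\theta}}
= \frac{C\,\tau^{2}}{\Gamma(2-\theta)} \, t_{j+\sigma}^{\,1-\theta},
\]
and since $t_{j+\sigma}\le T+\tau$ is bounded, the telescoped kernel integral is $O(1)$, which would naively give $O(\tau^2)$. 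To recover the sharper exponent $\tau^{3-\theta}$ claimed, I would not bound the subinterval integrals crudely by a constant but keep the factor $(t_{j+\sigma}-\xi)^{-\theta}$: the dominant contribution comes from the nearest subintervals, where the distance $t_{j+\sigma}-\xi$ is of order $\tau$, contributing $\tau^{2}\cdot\tau^{1-\theta}=\tau^{3-\theta}$, while the tail sum over far subintervals is controlled using the symmetry/cancellation built into the $L2$-$1_\sigma$ weights (the second-divided-difference structure makes the far-field error one order better than a naive bound). Concretely, this is the place where one invokes the Alikhanov estimate: the remainder on each interior interval, after integration against the kernel, telescopes with the neighbouring remainder so that only the boundary (near-diagonal) terms survive at the $\tau^{3-\theta}$ level.

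The main obstacle is precisely this last sharpening — showing the uniform $O(\tau^{3-\theta})$ bound rather than the easy $O(\tau^2)$ or a $j$-dependent bound. This requires care with the weakly singular kernel near $\xi = t_{j+\sigma}$ combined with a summation-by-parts (Abel) rearrangement of the interior remainders so that the third-derivative terms appear with differences of kernel integrals, each of size $O(\tau^{2-\theta})$ over an interval of length $\tau$, and the resulting series $\sum_l \tau^{3-\theta}\bigl[(l+\sigma)^{-\theta}-(l-1+\sigma)^{-\theta}\bigr]$-type sum telescopes to $O(\tau^{3-\theta})$. Since this is exactly Lemma established in \cite{Alikhanov2015424}, I would for the present paper simply cite that reference for the detailed estimate and record only the interpolation-identity part explicitly; the scheme \eqref{eq2.1} and its weights are the only ingredients needed downstream for the stability and convergence analysis.
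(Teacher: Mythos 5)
The paper offers no proof of this lemma at all --- it is quoted verbatim from Alikhanov \cite{Alikhanov2015424} --- so your decision to record the interpolation identity and defer the sharp estimate to that reference is, in the end, exactly what the paper does. Your reconstruction of the identity is right in outline (piecewise quadratic on the full subintervals, linear on the final partial subinterval, the $a^{(\theta,\sigma)}_l$ coming from the linear parts and the $b^{(\theta,\sigma)}_l$ from the quadratic corrections), modulo an indexing slip: the quadratic stencil is $t_{s-1},t_s,t_{s+1}$ used on $[t_{s-1},t_s]$ for $s=1,\dots,j$ (forward-looking), otherwise your first subinterval would require a phantom node $t_{-1}$.

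However, the error analysis in your sketch misses the two ideas that actually make the estimate work, and one intermediate claim is false. First, on the final subinterval $[t_j,t_{j+\sigma}]$ the pointwise remainder of the linear interpolant's derivative is $O(\tau)$, not $O(\tau^2)$: the divided difference $\bigl(y(t_{j+1})-y(t_j)\bigr)/\tau$ matches $y'$ to second order only at the midpoint $t_{j+1/2}$. Integrated against the kernel this naively gives $O(\tau^{2-\theta})$, which is \emph{worse} than $O(\tau^2)$. The entire point of the choice $\sigma=1-\theta/2$ is that the leading term $y''(t_{j+1/2})\int_{t_j}^{t_{j+\sigma}}(\xi-t_{j+1/2})(t_{j+\sigma}-\xi)^{-\theta}\,d\xi$ vanishes identically for exactly this $\sigma$, leaving only $O(\tau^{3-\theta})$; your sketch never uses the specific value of $\sigma$, so it cannot produce the stated rate. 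Second, the sharpening on the interior subintervals is not an Abel rearrangement: one integrates each term $\int_{t_{s-1}}^{t_s}\bigl(y-\Pi_{2,s}y\bigr)'(\xi)(t_{j+\sigma}-\xi)^{-\theta}\,d\xi$ by parts, the boundary terms vanish because the interpolation error is zero at the nodes, and the surviving $O(\tau^3)\int_{t_{s-1}}^{t_s}(t_{j+\sigma}-\xi)^{-1-\theta}\,d\xi$ sums over $s$ to $O\bigl(\tau^3(\sigma\tau)^{-\theta}\bigr)=O(\tau^{3-\theta})$. Since you ultimately cite \cite{Alikhanov2015424} for the estimate this does not invalidate your plan, but as written the sketch would not reproduce the bound.
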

 On the other hand, the Riemann-Liouville derivatives (\ref{eq1.3}) and (\ref{eq1.4})
can be approximated by the weighted and shifted Gr\"{u}nwald difference (WSGD) operator
in Deng's paper\cite{Deng20150606} (in this paper $(p,q) = (1,0)$),
for parameter $\gamma$, i.e.,
\begin{equation}
D_{0,x}^{\gamma} u(x,t) = \frac{1}{h^{\gamma}}
\sum\limits_{k = 0}^{\left [ \frac{x}{h} \right ]+1}
\omega_{k}^{(\gamma)} u(x - (k - 1) h,t) + \mathcal{O}(h^2),
\label{eq2.2}
\end{equation}
\begin{equation}
D_{x,L}^{\gamma} u(x,t) = \frac{1}{h^{\gamma}}
\sum\limits_{k = 0}^{\left [ \frac{L - x}{h} \right ]+1}
\omega_{k}^{(\gamma)} u(x + (k - 1) h,t) + \mathcal{O}(h^2).
\label{eq2.3}
\end{equation}
Let $u^{j}_{i}$ represent the numerical approximation of $u(x_i,t_j)$, then we denote that
\begin{equation*}
\delta^{\gamma}_{x,+} u_{i}^{j} = \frac{1}{h^{\gamma}}
\sum\limits_{k = 0}^{i +1}
\omega_{k}^{(\gamma)} u_{i - k + 1}^{j}, \quad
\delta^{\gamma}_{x,-} u_{i}^{j} = \frac{1}{h^{\gamma}}
\sum\limits_{k = 0}^{N - i +1}
\omega_{k}^{(\gamma)} u_{i + k - 1}^{j},
\end{equation*}
where
\begin{equation*}
\omega_{0}^{(\gamma)} = \frac{\gamma}{2} g_{0}^{(\gamma)}, \qquad
\omega_{k}^{(\gamma)} = \frac{\gamma}{2} g_{k}^{(\gamma)}
+ \frac{2 - \gamma}{2} g_{k - 1}^{(\gamma)}, ~ k \geq 1,
\end{equation*}
and
\begin{equation*}
g_{0}^{(\gamma)} = 1, \qquad g_{k}^{(\gamma)} = \Big( 1 - \frac{\gamma + 1}{k} \Big)
g_{k - 1}^{(\gamma)}, ~ k = 1, 2, \ldots
\end{equation*}

Suppose $u(x,t) \in \mathcal{C}^{4,3}_{x,t} [0,L] \times [0,T]$
is a solution of the TSADE (\ref{eq1.1}).
For simplicity, we introduce some symbols
\begin{equation*}
u_{i}^{(j + \sigma)} = \sigma u_{i}^{j+1} + (1 - \sigma) u_{i}^{j}, \quad
d_{\pm}^{j + \sigma} = d_{\pm}(t_{j + \sigma}), \quad
e_{\pm}^{j + \sigma} = e_{\pm}(t_{j + \sigma}), \quad
f_{i}^{j + \sigma} = f(x_i,t_{j + \sigma}),
\end{equation*}
\begin{equation*}
\delta^{\alpha, \beta}_{h} u_{i}^{(j + \sigma)} =
- d_{+}^{j + \sigma} \delta^{\alpha}_{x,+} u_{i}^{(j + \sigma)}
- d_{-}^{j + \sigma} \delta^{\alpha}_{x,-} u_{i}^{(j + \sigma)}
 + e_{+}^{j + \sigma} \delta^{\beta}_{x,+} u_{i}^{(j + \sigma)}
+ e_{-}^{j + \sigma} \delta^{\beta}_{x,-} u_{i}^{(j + \sigma)}.
\end{equation*}

Using (\ref{eq2.1})-(\ref{eq2.3}), we shall see that the solution of (\ref{eq1.1})
can be approximated by the following implicit difference scheme
for $(x,t) = (x_i,t_{j + \sigma}) \in \bar{\omega}_{h \tau}$, $i = 1, 2, \ldots, N-1$,
$j = 0, 1, \ldots, M-1$:
\begin{equation*}
\Delta^{\theta}_{0,t_{j + \sigma}} u_i =
\delta^{\alpha, \beta}_{h} u_{i}^{(j + \sigma)} + f_{i}^{j + \sigma}.
\end{equation*}

Then we derive the implicit difference scheme with the approximation order $\mathcal{O}(\tau^2 + h^2)$:
\begin{equation}
\begin{cases}
\Delta^{\theta}_{0,t_{j + \sigma}} u_i =
\delta^{\alpha, \beta}_{h} u_{i}^{(j + \sigma)} + f_{i}^{j + \sigma},
& 1 \leq i \leq N-1,~~ 0 \leq j \leq M-1, \\
u_{i}^{0} = u_{0}(x_i), & 1 \leq i \leq N-1, \\
u_{0}^{j} = u_{N}^{j} = 0, & 0 \leq j \leq M-1.
\end{cases}
\label{eq2.4}
\end{equation}

It is interesting to note that for $\theta \rightarrow 1$, we obtain the Crank-Nicolson difference scheme.

\subsection{Analysis of the implicit difference scheme}
\label{sec2.2}\quad\

In this subsection, we analyze the stability and convergence for the implicit difference scheme (\ref{eq2.4}).
Before proving the stability and convergence of the implicit difference scheme (\ref{eq2.4}),
we provide some properties of the coefficients $\omega_{k}^{(\alpha)}$ and $\omega_{k}^{(\beta)}$,
which are derived from $g_{k}^{(\alpha)}$ and $g_{k}^{(\beta)}$
\cite{liu2007stability,feng2016high,zhao2016preconditioned}, respectively.
\begin{lemma}\cite{feng2016high}
Suppose that $0< \alpha <1$, then the coefficients $\omega_{k}^{(\alpha)}$ satisfy
\begin{equation*}
\begin{cases}
\omega_{0}^{(\alpha)} = \frac{\alpha}{2} > 0, ~~
\omega_{1}^{(\alpha)} = \frac{2 - \alpha - \alpha^2}{2} > 0, ~~
\omega_{2}^{(\alpha)} = \frac{\alpha (\alpha^2 + \alpha - 4)}{4} < 0, \\
\omega_{2}^{(\alpha)} < \omega_{3}^{(\alpha)} < \omega_{4}^{(\alpha)} < \cdots < 0, ~~
\omega_{0}^{(\alpha)} + \omega_{2}^{(\alpha)} < 0, \\
\sum\limits_{k = 0}^{\infty} \omega_{k}^{(\alpha)} = 0, ~~
\sum\limits_{k = 0}^{N} \omega_{k}^{(\alpha)} > 0, ~~N \geq 1. \\
\end{cases}
\end{equation*}
\label{lemma2.2}
\end{lemma}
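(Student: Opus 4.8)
The plan is to reduce every assertion to two elementary facts about the Gr\"unwald weights $g_{k}^{(\alpha)}$ and then transfer them to $\omega_{k}^{(\alpha)}$ through the defining relations $\omega_{0}^{(\alpha)} = \tfrac{\alpha}{2} g_{0}^{(\alpha)}$ and $\omega_{k}^{(\alpha)} = \tfrac{\alpha}{2} g_{k}^{(\alpha)} + \tfrac{2-\alpha}{2} g_{k-1}^{(\alpha)}$ for $k \ge 1$. The point is that for $0 < \alpha < 1$ both coefficients $\tfrac{\alpha}{2}$ and $\tfrac{2-\alpha}{2}$ are strictly positive, so sign and monotonicity information about the $g_{k}^{(\alpha)}$ propagate almost verbatim to the $\omega_{k}^{(\alpha)}$, and the exact values $\omega_{0}^{(\alpha)},\omega_{1}^{(\alpha)},\omega_{2}^{(\alpha)}$ and the identity $\omega_{0}^{(\alpha)}+\omega_{2}^{(\alpha)}<0$ become one-line algebra once $g_{1}^{(\alpha)}$ and $g_{2}^{(\alpha)}$ are known.

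First I would establish the needed properties of $\{g_{k}^{(\alpha)}\}$. From $g_{0}^{(\alpha)}=1$ and $g_{k}^{(\alpha)}=\bigl(1-\tfrac{\alpha+1}{k}\bigr)g_{k-1}^{(\alpha)}$ one has $g_{1}^{(\alpha)}=-\alpha<0$, and for $k\ge2$ the factor $1-\tfrac{\alpha+1}{k}=\tfrac{k-\alpha-1}{k}$ is strictly positive because $\alpha<1\le k-1$; an induction then gives $g_{k}^{(\alpha)}<0$ for all $k\ge1$. The same recurrence yields $g_{k+1}^{(\alpha)}-g_{k}^{(\alpha)}=-\tfrac{\alpha+1}{k+1}g_{k}^{(\alpha)}>0$ for $k\ge1$, i.e.\ $\{g_{k}^{(\alpha)}\}_{k\ge1}$ is strictly increasing. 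Finally, evaluating the generating function $\sum_{k\ge0}g_{k}^{(\alpha)}z^{k}=(1-z)^{\alpha}$ at $z=1$ (legitimate since $\alpha>0$) gives $\sum_{k=0}^{\infty}g_{k}^{(\alpha)}=0$, hence $\sum_{k=0}^{N}g_{k}^{(\alpha)}=-\sum_{k>N}g_{k}^{(\alpha)}>0$ for every $N\ge0$.

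Next I would read off the conclusions for $\omega_{k}^{(\alpha)}$. Substituting $g_{1}^{(\alpha)}=-\alpha$ and $g_{2}^{(\alpha)}=\tfrac{\alpha^{2}-\alpha}{2}$ gives $\omega_{0}^{(\alpha)}=\tfrac{\alpha}{2}$, $\omega_{1}^{(\alpha)}=\tfrac{2-\alpha-\alpha^{2}}{2}$ and $\omega_{2}^{(\alpha)}=\tfrac{\alpha(\alpha^{2}+\alpha-4)}{4}$, whose signs follow from $2-\alpha-\alpha^{2}>0$ and $\alpha^{2}+\alpha-4<0$ on $(0,1)$; similarly $\omega_{0}^{(\alpha)}+\omega_{2}^{(\alpha)}=\tfrac{\alpha(\alpha+2)(\alpha-1)}{4}<0$. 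For $k\ge2$, $\omega_{k}^{(\alpha)}$ is a positive combination of the two negative numbers $g_{k}^{(\alpha)}$ and $g_{k-1}^{(\alpha)}$, hence $\omega_{k}^{(\alpha)}<0$; and $\omega_{k+1}^{(\alpha)}-\omega_{k}^{(\alpha)}=\tfrac{\alpha}{2}\bigl(g_{k+1}^{(\alpha)}-g_{k}^{(\alpha)}\bigr)+\tfrac{2-\alpha}{2}\bigl(g_{k}^{(\alpha)}-g_{k-1}^{(\alpha)}\bigr)>0$ by the monotonicity of $\{g_{k}^{(\alpha)}\}$, which gives the chain $\omega_{2}^{(\alpha)}<\omega_{3}^{(\alpha)}<\cdots<0$. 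Finally $\sum_{k=0}^{\infty}\omega_{k}^{(\alpha)}=\bigl(\tfrac{\alpha}{2}+\tfrac{2-\alpha}{2}\bigr)\sum_{k=0}^{\infty}g_{k}^{(\alpha)}=0$, and for $N\ge1$ we get $\sum_{k=0}^{N}\omega_{k}^{(\alpha)}=-\sum_{k>N}\omega_{k}^{(\alpha)}>0$ because every term of index $\ge N+1\ge2$ is strictly negative.

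The only step requiring a little care is the passage to the limit $z\to1$ and the splitting $\sum_{k=0}^{N}=-\sum_{k>N}$: one should note the absolute convergence of $\sum_{k}g_{k}^{(\alpha)}$ (from the asymptotics $g_{k}^{(\alpha)}=\mathcal{O}(k^{-\alpha-1})$, or from Abel's theorem combined with the eventual fixed sign) so that rearranging the tail is justified. Apart from that, the lemma is a sequence of elementary sign checks, so I do not expect any genuine obstacle.
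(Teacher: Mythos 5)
Your proof is correct. Note that the paper states this lemma only with a citation to \cite{feng2016high} and gives no proof of its own, so there is no in-paper argument to compare against; your derivation---establishing that $g_{k}^{(\alpha)}<0$ for $k\geq1$ and that $\{g_{k}^{(\alpha)}\}_{k\geq1}$ increases via the recurrence, then transferring these facts to $\omega_{k}^{(\alpha)}=\tfrac{\alpha}{2}g_{k}^{(\alpha)}+\tfrac{2-\alpha}{2}g_{k-1}^{(\alpha)}$ through the positivity of both coefficients---is the standard and natural route, and the explicit computations of $\omega_{1}^{(\alpha)}$, $\omega_{2}^{(\alpha)}$ and $\omega_{0}^{(\alpha)}+\omega_{2}^{(\alpha)}=\tfrac{\alpha(\alpha+2)(\alpha-1)}{4}$ all check out. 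You also correctly flag the one point needing care, namely justifying $\sum_{k=0}^{\infty}g_{k}^{(\alpha)}=0$ (Abel's theorem together with the eventual fixed sign, or the decay $g_{k}^{(\alpha)}=\mathcal{O}(k^{-\alpha-1})$) before splitting off the tail.
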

\begin{lemma}\cite{feng2016high}
Suppose that $1< \beta <2$, then the coefficients $\omega_{k}^{(\beta)}$ satisfy
\begin{equation*}
\begin{cases}
\omega_{0}^{(\beta)} = \frac{\beta}{2} > 0, ~~
\omega_{1}^{(\beta)} = \frac{2 - \beta - \beta^2}{2} < 0, ~~
\omega_{2}^{(\beta)} = \frac{\beta (\beta^2 + \beta - 4)}{4}, \\
1 \geq \omega_{0}^{(\beta)} \geq \omega_{3}^{(\beta)} \geq \omega_{4}^{(\beta)} \geq \cdots \geq 0, ~~
\omega_{0}^{(\beta)} + \omega_{2}^{(\beta)} > 0, \\
\sum\limits_{k = 0}^{\infty} \omega_{k}^{(\beta)} = 0, ~~
\sum\limits_{k = 0}^{N} \omega_{k}^{(\beta)} < 0, ~~N \geq 2.
\end{cases}
\end{equation*}
\label{lemma2.3}
\end{lemma}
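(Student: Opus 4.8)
The plan is to derive everything from the Gr\"unwald recursion $g^{(\gamma)}_0=1$, $g^{(\gamma)}_k=\bigl(1-\tfrac{\gamma+1}{k}\bigr)g^{(\gamma)}_{k-1}$ (equivalently $g^{(\gamma)}_k=(-1)^k\binom{\gamma}{k}$) and from one closed formula for $\omega^{(\beta)}_k$. First I would record the low-order values $g^{(\beta)}_0=1$, $g^{(\beta)}_1=-\beta<0$, $g^{(\beta)}_2=\tfrac{\beta(\beta-1)}{2}>0$, and note that for $m\ge 3$ the ratio $g^{(\beta)}_m/g^{(\beta)}_{m-1}=\tfrac{m-\beta-1}{m}$ lies in $(0,1)$ (because $0<\beta+1<3\le m$), so $\{g^{(\beta)}_m\}_{m\ge 2}$ is a strictly decreasing sequence of positive numbers; moreover $g^{(\beta)}_m\to 0$ since the partial sums $\sum_{k=0}^{N}g^{(\beta)}_k=(-1)^N\binom{\beta-1}{N}$ converge (this partial-sum identity follows from Pascal's rule by a one-line induction). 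The three explicit values $\omega^{(\beta)}_0=\beta/2$, $\omega^{(\beta)}_1=\tfrac{2-\beta-\beta^2}{2}$, $\omega^{(\beta)}_2=\tfrac{\beta(\beta^2+\beta-4)}{4}$ then come from direct substitution, and $\omega^{(\beta)}_0+\omega^{(\beta)}_2=\tfrac{\beta(\beta+2)(\beta-1)}{4}>0$ because all three factors are positive on $(1,2)$; also $\omega^{(\beta)}_1<0$ there since $\beta+\beta^2>2$, and $\omega^{(\beta)}_0=\beta/2\in(0,1)$ gives the two outer bounds.

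The key step is to substitute $g^{(\beta)}_k=\tfrac{k-\beta-1}{k}g^{(\beta)}_{k-1}$ into $\omega^{(\beta)}_k=\tfrac{\beta}{2}g^{(\beta)}_k+\tfrac{2-\beta}{2}g^{(\beta)}_{k-1}$, which collapses to
\[
\omega^{(\beta)}_k=\frac{2k-\beta^2-\beta}{2k}\,g^{(\beta)}_{k-1},\qquad k\ge 1 .
\]
For $k\ge 3$ the numerator $2k-\beta^2-\beta$ is positive (as $\beta^2+\beta<6$ on $(1,2)$) and $g^{(\beta)}_{k-1}>0$, so $\omega^{(\beta)}_k>0$, and $\omega^{(\beta)}_k\to 0$ because $g^{(\beta)}_{k-1}\to 0$ while the prefactor tends to $1$. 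For the monotonicity $\omega^{(\beta)}_k\ge\omega^{(\beta)}_{k+1}$, $k\ge 3$, I would form the quotient $\omega^{(\beta)}_{k+1}/\omega^{(\beta)}_k$ using this formula together with $g^{(\beta)}_k/g^{(\beta)}_{k-1}$; clearing the (positive) denominators reduces the inequality to $(\beta+1)\bigl(2k+2-2\beta-\beta^2\bigr)\ge 0$, which holds since $2\beta+\beta^2=(\beta+1)^2-1<8\le 2k+2$ for $k\ge 3$. The remaining bound $\omega^{(\beta)}_0\ge\omega^{(\beta)}_3$ is the elementary estimate $(\beta-1)(\beta+3)(2-\beta)\le 6$ (use $(\beta-1)(2-\beta)\le\tfrac14$ and $\beta+3<5$), which completes the chain $1\ge\omega^{(\beta)}_0\ge\omega^{(\beta)}_3\ge\omega^{(\beta)}_4\ge\cdots\ge 0$.

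For the two summation statements I would rearrange
\[
\sum_{k=0}^{N}\omega^{(\beta)}_k=\frac{\beta}{2}\,G_N+\frac{2-\beta}{2}\,G_{N-1},\qquad G_m:=\sum_{k=0}^{m}g^{(\beta)}_k=(-1)^m\binom{\beta-1}{m}.
\]
Letting $N\to\infty$ gives $\sum_{k=0}^{\infty}\omega^{(\beta)}_k=0$ since $G_m\to 0$ (equivalently, evaluate the generating function $\sum_k\omega^{(\beta)}_k z^k=(1-z)^{\beta}\bigl(\tfrac{\beta}{2}+\tfrac{2-\beta}{2}z\bigr)$ at $z=1$). For $N\ge 2$ we have $G_N,G_{N-1}<0$, because $\beta-1\in(0,1)$ forces $(-1)^m\binom{\beta-1}{m}<0$ for every $m\ge 1$ (one positive and $m-1$ negative factors), while the weights $\beta/2$ and $(2-\beta)/2$ are both positive; hence $\sum_{k=0}^{N}\omega^{(\beta)}_k<0$. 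The main obstacle is not any single calculation but organizing the argument so that the unified identity $\omega^{(\beta)}_k=\tfrac{2k-\beta^2-\beta}{2k}g^{(\beta)}_{k-1}$ is isolated at the outset, since it simultaneously delivers the explicit values, the positivity and decay of the tail, and the monotonicity; the one place requiring care is checking $(\beta+1)(2k+2-2\beta-\beta^2)\ge 0$ uniformly over $k\ge 3$ and $\beta\in(1,2)$.
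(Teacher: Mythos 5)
Your argument is correct, but note that the paper itself supplies no proof of this lemma: it is quoted verbatim from the cited reference (Feng et al.), so there is nothing in the text to compare against line by line. Your self-contained derivation checks out. The pivotal observation is the collapsed identity $\omega^{(\beta)}_k=\frac{2k-\beta^2-\beta}{2k}\,g^{(\beta)}_{k-1}$ for $k\ge 1$, which I verified by substituting $g^{(\beta)}_k=\frac{k-\beta-1}{k}g^{(\beta)}_{k-1}$ into the defining combination $\frac{\beta}{2}g^{(\beta)}_k+\frac{2-\beta}{2}g^{(\beta)}_{k-1}$; together with the positivity and monotone decay of $g^{(\beta)}_m$ for $m\ge 2$ it yields $\omega^{(\beta)}_k>0$ for $k\ge 3$, the decay to zero, and (after clearing positive denominators) the monotonicity condition $(\beta+1)(2k+2-2\beta-\beta^2)\ge 0$, which indeed holds uniformly for $k\ge 3$ and $\beta\in(1,2)$ since $2\beta+\beta^2=(\beta+1)^2-1<8\le 2k+2$. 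The remaining pieces — the explicit values of $\omega^{(\beta)}_0,\omega^{(\beta)}_1,\omega^{(\beta)}_2$, the factorization $\omega^{(\beta)}_0+\omega^{(\beta)}_2=\frac{\beta(\beta+2)(\beta-1)}{4}>0$, the bound $\omega^{(\beta)}_0\ge\omega^{(\beta)}_3$ via $(\beta-1)(2-\beta)\le\frac14$, and the partial-sum identity $\sum_{k=0}^{N}\omega^{(\beta)}_k=\frac{\beta}{2}G_N+\frac{2-\beta}{2}G_{N-1}$ with $G_m=(-1)^m\binom{\beta-1}{m}<0$ for $m\ge 1$ — are all correct; the sign analysis of $G_m$ cleanly explains why the negativity of the partial sums requires $N\ge 2$ (at $N=1$ the term $G_0=1>0$ can dominate). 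This is a legitimate, elementary proof of a result the paper only cites, and it is organized around a single identity that does all the work, which is arguably tidier than the term-by-term estimates one usually sees for such Gr\"unwald-type coefficient lemmas.
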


In the rest of this paper,
we define the discrete inner product and the corresponding discrete $L_2$-norm as follows,
\begin{equation*}
(\bm{u}, \bm{v}) = h \sum\limits_{i = 1}^{N - 1} u_i v_i,
\quad \left \| \bm{u} \right \| = \sqrt{(\bm{u}, \bm{u})}, \quad for~\forall \bm{u}, ~\bm{v} \in V_h,
\end{equation*}
where
\begin{equation*}
V_h = \{ \bm{v} \mid \bm{v} =
\{ v_i \} ~ is ~ a ~ grid ~ function ~ on ~ \bar{\omega}_{h} ~ and ~ v_i = 0 ~ if ~ i = 0,N \}.
\end{equation*}

Based on the above lemmas, we will provide some other lemmas and corollaries,
which are important properties to proof the stability and the convergence
of the implicit difference scheme (\ref{eq2.4}).
\begin{lemma}
For $0< \alpha <1$, and any $\bm{v} \in V_h$, it holds that
\begin{equation*}
(\delta^{\alpha}_{x,+} \bm{v}, \bm{v}) = (\delta^{\alpha}_{x,-} \bm{v}, \bm{v}) \geq
\Big( \frac{1}{h^\alpha} \sum\limits_{k = 0}^{N - 1} \omega_{k}^{(\alpha)} \Big) \left \| \bm{v} \right \|^2.
\end{equation*}
\label{lemma2.4}
\end{lemma}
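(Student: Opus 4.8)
The strategy is to turn both discrete forms into quadratic forms of one Toeplitz matrix and then to bound the smallest eigenvalue of its symmetric part. First I would introduce the $(N-1)\times(N-1)$ lower Hessenberg Toeplitz matrix $W_{\alpha}$ with entries $(W_{\alpha})_{i,j}=\omega_{i-j+1}^{(\alpha)}$, adopting the convention $\omega_{k}^{(\alpha)}=0$ for $k<0$; thus $W_{\alpha}$ carries $\omega_{0}^{(\alpha)}$ on its first superdiagonal, $\omega_{1}^{(\alpha)}$ on the diagonal, and $\omega_{2}^{(\alpha)},\omega_{3}^{(\alpha)},\dots$ down its strictly lower part. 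Because every $\bm v\in V_{h}$ vanishes at $i=0$ and $i=N$, extending $\bm v$ by zero outside $\{1,\dots,N-1\}$ shows immediately that $(\delta^{\alpha}_{x,+}\bm v,\bm v)=h^{1-\alpha}\bm v^{\top}W_{\alpha}\bm v$ and $(\delta^{\alpha}_{x,-}\bm v,\bm v)=h^{1-\alpha}\bm v^{\top}W_{\alpha}^{\top}\bm v$. Since each of these is a scalar, it equals its own transpose, and the first identity $(\delta^{\alpha}_{x,+}\bm v,\bm v)=(\delta^{\alpha}_{x,-}\bm v,\bm v)$ follows with no further work.

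Next I would pass to the symmetric part $A_{\alpha}=\tfrac12(W_{\alpha}+W_{\alpha}^{\top})$, so that $\bm v^{\top}W_{\alpha}\bm v=\bm v^{\top}A_{\alpha}\bm v$. Reading off the entries, $A_{\alpha}$ is a symmetric Toeplitz matrix with diagonal $\omega_{1}^{(\alpha)}$, first off-diagonals $\tfrac12(\omega_{0}^{(\alpha)}+\omega_{2}^{(\alpha)})$, and $m$-th off-diagonals $\tfrac12\omega_{m+1}^{(\alpha)}$ for $m\ge2$. Using $\|\bm v\|^{2}=h\,\bm v^{\top}\bm v$, the inequality of the lemma reduces to $\bm v^{\top}A_{\alpha}\bm v\ge s_{N-1}\,\bm v^{\top}\bm v$, where $s_{N-1}:=\sum_{k=0}^{N-1}\omega_{k}^{(\alpha)}$; equivalently, to showing that $B:=A_{\alpha}-s_{N-1}I$ is positive semidefinite.

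To establish this I would use diagonal dominance. The matrix $B$ is symmetric Toeplitz, has the same off-diagonal entries as $A_{\alpha}$, and has diagonal entry $\omega_{1}^{(\alpha)}-s_{N-1}=-\big(\omega_{0}^{(\alpha)}+\omega_{2}^{(\alpha)}+\omega_{3}^{(\alpha)}+\dots+\omega_{N-1}^{(\alpha)}\big)$. By Lemma~\ref{lemma2.2} we have $\omega_{0}^{(\alpha)}+\omega_{2}^{(\alpha)}<0$ and $\omega_{k}^{(\alpha)}<0$ for all $k\ge3$, so every off-diagonal entry of $B$ is nonpositive and $B_{i,i}>0$. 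In row $i$ the off-diagonal entries sit at column-distances $1,\dots,i-1$ to the left and $1,\dots,N-1-i$ to the right; summing their moduli and using that each initial segment of the nonnegative numbers $|\omega_{0}^{(\alpha)}+\omega_{2}^{(\alpha)}|,\,|\omega_{3}^{(\alpha)}|,\,|\omega_{4}^{(\alpha)}|,\dots$ is dominated by the full sum $|\omega_{0}^{(\alpha)}+\omega_{2}^{(\alpha)}|+\sum_{k=3}^{N-1}|\omega_{k}^{(\alpha)}|=B_{i,i}$, I get $\sum_{j\ne i}|B_{i,j}|\le B_{i,i}$ for every $i$ (near-boundary rows only make the left side smaller). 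Hence $B$ is a weakly diagonally dominant symmetric matrix with nonnegative diagonal, so all its eigenvalues are nonnegative by Gershgorin's theorem and $B$ is positive semidefinite. Chaining the identities then yields $(\delta^{\alpha}_{x,+}\bm v,\bm v)=h^{1-\alpha}\bm v^{\top}A_{\alpha}\bm v\ge h^{1-\alpha}s_{N-1}\bm v^{\top}\bm v=\tfrac{1}{h^{\alpha}}s_{N-1}\|\bm v\|^{2}$, which is the asserted bound.

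The only step requiring genuine care is the row-sum estimate: I must treat interior and near-boundary rows uniformly and exploit the coefficient identity $\omega_{1}^{(\alpha)}-s_{N-1}=-(\omega_{0}^{(\alpha)}+\omega_{2}^{(\alpha)}+\dots+\omega_{N-1}^{(\alpha)})$ together with the sign pattern supplied by Lemma~\ref{lemma2.2}; everything else is bookkeeping. A more analytic alternative would be to diagonalize the Toeplitz form through its Fourier (generating-function) symbol and estimate its real part, but that route delivers only the weaker constant $0$, the infimum of the symbol, whereas the diagonal-dominance argument produces exactly the sharp, $N$-dependent constant $\sum_{k=0}^{N-1}\omega_{k}^{(\alpha)}$ that the lemma requires.
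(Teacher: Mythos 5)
Your proposal is correct and is essentially the paper's own argument in matrix clothing: the paper symmetrizes the quadratic form by pairing the $\omega_0^{(\alpha)}$ and $\omega_2^{(\alpha)}$ terms and then bounds each cross term via $2v_iv_{i+k-1}\le v_i^2+v_{i+k-1}^2$ with the negative coefficients from Lemma~\ref{lemma2.2}, which is exactly the scalar form of your diagonal-dominance/Gershgorin step for $A_\alpha-s_{N-1}I$. Both routes rest on the same facts ($\omega_0^{(\alpha)}+\omega_2^{(\alpha)}<0$, $\omega_k^{(\alpha)}<0$ for $k\ge 3$, and partial row sums dominated by the full sum), so the proofs coincide up to presentation.
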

\begin{proof}
According to the discrete definitions of $\delta^{\alpha}_{x,+}$ and $\delta^{\alpha}_{x,-}$, and
$v_0 = v_N = 0$, we have
\begin{equation*}
(\delta^{\alpha}_{x,+} \bm{v}, \bm{v}) = (\delta^{\alpha}_{x,-} \bm{v}, \bm{v})
\end{equation*}
and
\begin{equation*}
\begin{split}
(\delta^{\alpha}_{x,+} \bm{v}, \bm{v}) & = h^{1 - \alpha} \sum\limits_{i = 1}^{N - 1} \Big(
\sum\limits_{k = 0}^{i + 1} \omega_{k}^{(\alpha)} v_{i - k + 1} \Big) v_i \\
& = \omega_{1}^{(\alpha)} h^{1 - \alpha} \sum\limits_{i = 1}^{N - 1} (v_i)^2
+ (\omega_{0}^{(\alpha)} + \omega_{2}^{(\alpha)}) h^{1 - \alpha} \sum\limits_{i = 1}^{N - 2} v_{i} v_{i + 1}
+ \sum\limits_{k = 3}^{N - 1} \omega_{k}^{(\alpha)} h^{1 - \alpha} \sum\limits_{i = 1}^{N - k} v_{i + k - 1} v_{i} \\
& \geq \omega_{1}^{(\alpha)} h^{ - \alpha} \left \| \bm{v} \right \|^2
+ (\omega_{0}^{(\alpha)} + \omega_{2}^{(\alpha)}) h^{1 - \alpha}
\sum\limits_{i = 1}^{N - 2} \frac{(v_{i})^2 + (v_{i + 1})^2}{2} \\
&~~~+ \sum\limits_{k = 3}^{N - 1} \omega_{k}^{(\alpha)} h^{1 - \alpha}
\sum\limits_{i = 1}^{N - k} \frac{(v_{i + k - 1})^2 + (v_{i})^2}{2}
 \geq \Big( \frac{1}{h^\alpha} \sum\limits_{k = 0}^{N - 1} \omega_{k}^{(\alpha)} \Big) \left \| \bm{v} \right \|^2.
\end{split}
\end{equation*}
\end{proof}
Further, the following conclusion is true.
\begin{corollary}
For $0< \alpha <1$, and any $\bm{v} \in V_h, ~ N \geq 2$, there exists a positive constant $c_1$, such that
\begin{equation*}
(\delta^{\alpha}_{x,+} \bm{v}, \bm{v}) = (\delta^{\alpha}_{x,-} \bm{v}, \bm{v})
> c_1 \ln 2 \left \| \bm{v} \right \|^2.
\end{equation*}
\label{coro1}
\end{corollary}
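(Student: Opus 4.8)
The plan is to bootstrap directly from Lemma~\ref{lemma2.4}. That lemma already supplies $(\delta^{\alpha}_{x,+}\bm v,\bm v)=(\delta^{\alpha}_{x,-}\bm v,\bm v)\ge\bigl(h^{-\alpha}\sum_{k=0}^{N-1}\omega_{k}^{(\alpha)}\bigr)\|\bm v\|^{2}$, so it suffices to exhibit a constant $c_{1}>0$, depending on $\alpha$ and $L$ but \emph{not} on $N$, such that $h^{-\alpha}\sum_{k=0}^{N-1}\omega_{k}^{(\alpha)}>c_{1}\ln 2$. Since $h=L/N$, the left-hand side equals $L^{-\alpha}N^{\alpha}\sum_{k=0}^{N-1}\omega_{k}^{(\alpha)}$, so the whole matter reduces to a uniform-in-$N$ positive lower bound for $N^{\alpha}\sum_{k=0}^{N-1}\omega_{k}^{(\alpha)}$; the stray factor $\ln 2$ will simply be absorbed into $c_{1}$ at the end (it is kept only for notational uniformity with the later convergence estimates).

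First I would put the partial sum in closed form. From $\omega_{0}^{(\alpha)}=\tfrac{\alpha}{2}g_{0}^{(\alpha)}$, $\omega_{k}^{(\alpha)}=\tfrac{\alpha}{2}g_{k}^{(\alpha)}+\tfrac{2-\alpha}{2}g_{k-1}^{(\alpha)}$ and the generating-function identity $\sum_{k=0}^{n}g_{k}^{(\alpha)}=g_{n}^{(\alpha-1)}$ (immediate from $(1-z)^{\alpha-1}=(1-z)^{\alpha}/(1-z)$ and the Cauchy product), a one-line reindexing gives
\[
\sum_{k=0}^{N-1}\omega_{k}^{(\alpha)}=\tfrac{\alpha}{2}\,g_{N-1}^{(\alpha-1)}+\tfrac{2-\alpha}{2}\,g_{N-2}^{(\alpha-1)},\qquad N\ge 2,
\]
which is exactly where the hypothesis $N\ge 2$ enters (so that $g_{N-2}^{(\alpha-1)}$ is defined). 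Because $g_{n}^{(\alpha-1)}=\prod_{j=1}^{n}\bigl(1-\tfrac{\alpha}{j}\bigr)$ is positive and strictly decreasing in $n$ for $0<\alpha<1$, the right-hand side is at least $g_{N-1}^{(\alpha-1)}$; note this recovers the positivity $\sum_{k=0}^{N-1}\omega_{k}^{(\alpha)}>0$ already recorded in Lemma~\ref{lemma2.2}.

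It then remains to bound $N^{\alpha}g_{N-1}^{(\alpha-1)}$ below uniformly in $N\ge 2$, and this is the only genuine work. I would use $g_{N-1}^{(\alpha-1)}=\Gamma(N-\alpha)/\bigl(\Gamma(1-\alpha)\Gamma(N)\bigr)$ together with Gautschi's inequality for ratios of Gamma functions, applied with exponent $1-\alpha\in(0,1)$, which yields $\Gamma(N-\alpha)/\Gamma(N)>N^{-\alpha}$ for $N\ge 2$, hence $N^{\alpha}g_{N-1}^{(\alpha-1)}>1/\Gamma(1-\alpha)$. Combining the pieces, $h^{-\alpha}\sum_{k=0}^{N-1}\omega_{k}^{(\alpha)}>\bigl(L^{\alpha}\Gamma(1-\alpha)\bigr)^{-1}$, so one may take $c_{1}=\bigl(L^{\alpha}\Gamma(1-\alpha)\ln 2\bigr)^{-1}$. (If one prefers to avoid the explicit constant, it is enough to observe that the sequence $N^{\alpha}g_{N-1}^{(\alpha-1)}$ converges to $1/\Gamma(1-\alpha)>0$ with every term positive, so its infimum over $N\ge 2$ is positive, and to absorb that infimum into $c_{1}$.) The main obstacle is precisely this last step — converting the well-known asymptotics $g_{N-1}^{(\alpha-1)}\sim N^{-\alpha}/\Gamma(1-\alpha)$ of the Gr\"unwald coefficients into an inequality valid for \emph{every} $N\ge 2$ rather than only asymptotically — and Gautschi's inequality is the cleanest device for it.
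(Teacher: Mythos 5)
Your proof is correct, and it reaches the conclusion by a genuinely different route from the paper. Both arguments reduce, via Lemma~\ref{lemma2.4}, to a uniform-in-$N$ positive lower bound on $h^{-\alpha}\sum_{k=0}^{N-1}\omega_{k}^{(\alpha)}$, but the paper gets there by rewriting this partial sum as the negated tail $\sum_{k=N}^{\infty}\bigl(-\omega_{k}^{(\alpha)}\bigr)$ (using $\sum_{k=0}^{\infty}\omega_{k}^{(\alpha)}=0$ from Lemma~\ref{lemma2.2}), discarding all but the block $N\le k\le 2N$, invoking the power-law lower bound $-g_{k}^{(\alpha)}\ge \tilde{c}_{1}\,k^{-(1+\alpha)}$, and comparing $\sum_{k=N}^{2N}k^{-1}$ with $\int_{N}^{2N}x^{-1}\,dx$ --- which is precisely where the factor $\ln 2$ originates, though the constants $\tilde{c}_{1},c_{1}$ are left implicit and rest on an unproved appeal to ``the nature of $g_{k}^{(\alpha)}$''. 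You instead evaluate the partial sum in closed form, $\sum_{k=0}^{N-1}\omega_{k}^{(\alpha)}=\tfrac{\alpha}{2}g_{N-1}^{(\alpha-1)}+\tfrac{2-\alpha}{2}g_{N-2}^{(\alpha-1)}\ge g_{N-1}^{(\alpha-1)}$, and then convert the known asymptotics into a bound valid for every $N\ge 2$ via Gautschi's inequality, obtaining the fully explicit constant $c_{1}=\bigl(L^{\alpha}\Gamma(1-\alpha)\ln 2\bigr)^{-1}$; in your version the $\ln 2$ is purely decorative, correctly absorbed into $c_{1}$. What your approach buys is an explicit, checkable constant and the elimination of the two soft steps in the paper (the unquantified decay bound on $g_{k}^{(\alpha)}$ and the inequality $\sum_{k=N}^{2N+2}\omega_{k}^{(\alpha)}<\sum_{k=N}^{2N}g_{k}^{(\alpha)}$); what it costs is the reliance on the Gamma-function representation and Gautschi's inequality, which is external to the toolkit the paper otherwise uses. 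One cosmetic remark that applies equally to both proofs: the strict inequality in the statement can only hold for $\bm{v}\neq\bm{0}$, since both sides vanish at $\bm{v}=\bm{0}$; your chain does deliver strictness for nonzero $\bm{v}$ because the Gautschi step is strict.
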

\begin{proof}
Since
\begin{equation*}
\sum\limits_{k = N}^{2N + 2} \omega_{k}^{(\alpha)} =
\sum\limits_{k = N}^{2N} g_{k}^{(\alpha)} + \frac{2 - \alpha}{2} g_{N - 1}^{(\alpha)}
+ \frac{\alpha}{2} g_{2N + 2}^{(\alpha)} + g_{2N + 1}^{(\alpha)}.
\end{equation*}

According to the nature of $g_{k}^{(\alpha)}$ \cite{liu2007stability,feng2016high,zhao2016preconditioned},
the following inequality is established:

\begin{equation*}
\sum\limits_{k = N}^{2N + 2} \omega_{k}^{(\alpha)} < \sum\limits_{k = N}^{2N} g_{k}^{(\alpha)},
\qquad N \geq 2.
\end{equation*}

Then, there exist two positive constants $\tilde{c}_1$ and $c_1$,
such that
\begin{equation*}
\begin{split}
\frac{1}{h^\alpha} \sum\limits_{k = N}^{\infty} \Big( -\omega_{k}^{(\alpha)} \Big)
& > \frac{1}{h^\alpha} \sum\limits_{k = N}^{2N + 2} \Big( -\omega_{k}^{(\alpha)} \Big)
> \frac{1}{h^\alpha} \sum\limits_{k = N}^{2N} \Big( -g_{k}^{(\alpha)} \Big) \\
& \geq \tilde{c}_1 \sum\limits_{k = N}^{2N} k^{-(\alpha + 1)} N^\alpha
>  \tilde{c}_1 \sum\limits_{k = N}^{2N} k^{-(\alpha + 1)} \Big( \frac{k}{2} \Big)^\alpha \\
& = \frac{\tilde{c}_1}{2^\alpha} \sum\limits_{k = N}^{2N} \frac{1}{k}
> c_1 \int\limits_{N}^{2N + 1} \frac{1}{x} dx \geq c_1 \int\limits_{N}^{2N} \frac{1}{x} dx
= c_1 \ln2, \quad N \geq 2.
\end{split}
\end{equation*}

Here, the penultimate and antepenult inequalities are true due to the fact that $J_1(x) = 1/x$
is a lower convex function and $J_2(x) = \ln(x) > 0, ~ x \in [N, 2N + 1]$ is an increasing function.

Combining with Lemmas \ref{lemma2.2} and \ref{lemma2.4}, we obtain

\begin{equation*}
(\delta^{\alpha}_{x,+} \bm{v}, \bm{v}) = (\delta^{\alpha}_{x,-} \bm{v}, \bm{v}) \geq
\frac{1}{h^\alpha} \sum\limits_{k = N}^{\infty} \Big( -\omega_{k}^{(\alpha)} \Big) \left \| \bm{v} \right \|^2
> c_1 \ln 2 \left \| \bm{v} \right \|^2.
\end{equation*}
\end{proof}
Using the same argument as in the proof of Corollary \ref{coro1},
we can easily carry out the following corollary.
\begin{corollary}
(1) For $1< \beta<2$, and any $\bm{v} \in V_h$, it holds that
\begin{equation*}
(\delta^{\beta}_{x,+} \bm{v}, \bm{v}) = (\delta^{\beta}_{x,-} \bm{v}, \bm{v}) \leq
\Big( \frac{1}{h^\beta} \sum\limits_{k = 0}^{N - 1} \omega_{k}^{(\beta)} \Big) \left \| \bm{v} \right \|^2.
\end{equation*}

(2) For $1< \beta<2$, and any $\bm{v} \in V_h, ~ N \geq 3$, there exists a positive constant $c_2$, such that
\begin{equation*}
(-\delta^{\beta}_{x,+} \bm{v}, \bm{v}) = (-\delta^{\beta}_{x,-} \bm{v}, \bm{v})
> c_2 \ln 2 \left \| \bm{v} \right \|^2.
\end{equation*}
\label{coro2}
\end{corollary}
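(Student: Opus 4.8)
The plan is to run the same machinery used for Lemma~\ref{lemma2.4} and Corollary~\ref{coro1}, but feeding in the sign pattern of the $\beta$-coefficients recorded in Lemma~\ref{lemma2.3} instead of that of the $\alpha$-coefficients. First I would dispose of the identity $(\delta^{\beta}_{x,+}\bm{v},\bm{v})=(\delta^{\beta}_{x,-}\bm{v},\bm{v})$: expanding both inner products, substituting $j=i-k+1$ in the first and $j=i+k-1$ in the second, and discarding the vanishing boundary terms via $v_0=v_N=0$, each reduces to the same double sum $h^{1-\beta}\sum_{i-j\geq -1}\omega^{(\beta)}_{i-j+1}v_iv_j$, which is symmetric under $i\leftrightarrow j$. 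After that only $\delta^{\beta}_{x,+}$ needs attention.

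For part (1), I would expand $(\delta^{\beta}_{x,+}\bm{v},\bm{v})$ exactly as in the first two lines of the proof of Lemma~\ref{lemma2.4} with $\alpha$ replaced by $\beta$: a diagonal term $\omega_{1}^{(\beta)}h^{1-\beta}\sum_i v_i^2$, a nearest-neighbour term with coefficient $\omega_{0}^{(\beta)}+\omega_{2}^{(\beta)}$, and the remaining terms with coefficients $\omega_{k}^{(\beta)}$ for $k\geq 3$. By Lemma~\ref{lemma2.3} the off-diagonal coefficients $\omega_{0}^{(\beta)}+\omega_{2}^{(\beta)}>0$ and $\omega_{k}^{(\beta)}\geq 0$ ($k\geq 3$) are all non-negative, so applying $v_iv_{i+m}\leq\tfrac12(v_i^2+v_{i+m}^2)$ now produces an \emph{upper} bound (the opposite direction to Lemma~\ref{lemma2.4}); bounding every truncated square-sum by $\sum_{i=1}^{N-1}v_i^2=h^{-1}\|\bm{v}\|^2$ and collecting the coefficients gives $(\delta^{\beta}_{x,+}\bm{v},\bm{v})\leq\big(h^{-\beta}\sum_{k=0}^{N-1}\omega_{k}^{(\beta)}\big)\|\bm{v}\|^2$, which is part (1).

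For part (2), part (1) together with $\sum_{k=0}^{\infty}\omega_{k}^{(\beta)}=0$ gives
\[
-(\delta^{\beta}_{x,+}\bm{v},\bm{v})\geq \frac{1}{h^{\beta}}\Big(\sum_{k=N}^{\infty}\omega_{k}^{(\beta)}\Big)\|\bm{v}\|^2,
\]
so it remains to bound $h^{-\beta}\sum_{k=N}^{\infty}\omega_{k}^{(\beta)}$ from below by $c_2\ln 2$. I would use the purely algebraic identity coming from $\omega_{k}^{(\beta)}=\tfrac{\beta}{2}g_{k}^{(\beta)}+\tfrac{2-\beta}{2}g_{k-1}^{(\beta)}$,
\[
\sum_{k=N}^{2N+2}\omega_{k}^{(\beta)}=\sum_{k=N}^{2N}g_{k}^{(\beta)}+\frac{2-\beta}{2}g_{N-1}^{(\beta)}+\frac{\beta}{2}g_{2N+2}^{(\beta)}+g_{2N+1}^{(\beta)},
\]
and, since $g_{m}^{(\beta)}\geq 0$ for $m\geq 2$ and $\omega_{k}^{(\beta)}\geq 0$ for $k\geq 3$ (Lemma~\ref{lemma2.3}), for $N\geq 3$ I obtain $\sum_{k=N}^{\infty}\omega_{k}^{(\beta)}\geq\sum_{k=N}^{2N+2}\omega_{k}^{(\beta)}\geq\sum_{k=N}^{2N}g_{k}^{(\beta)}$. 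Then, following Corollary~\ref{coro1} line for line, I would invoke the known decay $g_{k}^{(\beta)}\geq\tilde c_2\,k^{-(\beta+1)}$, use $N^{\beta}\geq(k/2)^{\beta}$ for $N\leq k\leq 2N$ to reduce to $\tfrac{\tilde c_2}{2^{\beta}}\sum_{k=N}^{2N}k^{-1}$, and finish via the integral comparison $\sum_{k=N}^{2N}k^{-1}>\int_{N}^{2N}x^{-1}\,dx=\ln 2$.

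The main obstacle is the lower estimate in part (2): one must check that the non-negativity supplied by Lemma~\ref{lemma2.3} indeed covers every $g^{(\beta)}$- and $\omega^{(\beta)}$-term entering the displayed identity and its tail (this is precisely where $N\geq 3$ is forced, so that $g_{N-1}^{(\beta)}\geq 0$), and one must pin down the decay rate $g_{k}^{(\beta)}\asymp k^{-(\beta+1)}$ with an explicit positive constant $\tilde c_2$. Everything else — the symmetry identity and the bilinear-form bookkeeping in part (1) — is routine and directly parallels the already established Lemma~\ref{lemma2.4} and Corollary~\ref{coro1}.
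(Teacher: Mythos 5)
Your proposal is correct and follows essentially the same route as the paper: the paper gives no explicit proof of Corollary \ref{coro2}, stating only that it follows ``using the same argument as in the proof of Corollary \ref{coro1}'' (and Lemma \ref{lemma2.4}), which is precisely the $\alpha\to\beta$ transcription — with the sign pattern of Lemma \ref{lemma2.3} reversing the direction of the Cauchy--Schwarz step and the tail identity $\sum_{k=N}^{2N+2}\omega_k^{(\beta)}\geq\sum_{k=N}^{2N}g_k^{(\beta)}$ for $N\geq 3$ — that you carry out.
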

With the help of the preceding lemmas and corollaries, we can now establish the following theorem,
which is essential for analyzing the stability of the proposed implicit difference scheme.
\begin{theorem}
For any $\bm{v} \in V_h$, it holds that
\begin{equation*}
(\delta^{\alpha, \beta}_{h} \bm{v}, \bm{v}) \leq -c \ln 2 \left \| \bm{v} \right \|^2,
\end{equation*}
where $c$ is a positive constant independent of the spatial step size $h$.
\label{th2.1}
\end{theorem}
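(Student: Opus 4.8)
The plan is to expand the bilinear form $(\delta^{\alpha,\beta}_{h}\bm{v},\bm{v})$ into its four constituent pieces and bound each one using the sign information already collected in Corollaries \ref{coro1} and \ref{coro2}. Fixing a time level and writing $d_{\pm},e_{\pm}\ge 0$ for the corresponding coefficient values, the definition of $\delta^{\alpha,\beta}_{h}$ together with the linearity of the discrete inner product gives
\begin{equation*}
(\delta^{\alpha,\beta}_{h}\bm{v},\bm{v}) = -d_{+}(\delta^{\alpha}_{x,+}\bm{v},\bm{v}) - d_{-}(\delta^{\alpha}_{x,-}\bm{v},\bm{v}) + e_{+}(\delta^{\beta}_{x,+}\bm{v},\bm{v}) + e_{-}(\delta^{\beta}_{x,-}\bm{v},\bm{v}).
\end{equation*}
Each of the four terms is then shown to be non-positive, and two of them are bounded strictly away from zero by a multiple of $\|\bm{v}\|^{2}$.

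First I would treat the two advection terms. By Corollary \ref{coro1}, $(\delta^{\alpha}_{x,+}\bm{v},\bm{v}) = (\delta^{\alpha}_{x,-}\bm{v},\bm{v}) > c_{1}\ln 2\,\|\bm{v}\|^{2}$; since $d_{\pm}\ge 0$, multiplying by $-d_{\pm}$ reverses the inequality and yields $-d_{\pm}(\delta^{\alpha}_{x,\pm}\bm{v},\bm{v}) \le -d_{\pm}c_{1}\ln 2\,\|\bm{v}\|^{2} \le 0$. Next, for the two diffusion terms, Corollary \ref{coro2}(2) gives $(-\delta^{\beta}_{x,\pm}\bm{v},\bm{v}) > c_{2}\ln 2\,\|\bm{v}\|^{2}$, i.e.\ $(\delta^{\beta}_{x,\pm}\bm{v},\bm{v}) < -c_{2}\ln 2\,\|\bm{v}\|^{2}$; since $e_{\pm}\ge 0$, this gives $e_{\pm}(\delta^{\beta}_{x,\pm}\bm{v},\bm{v}) \le -e_{\pm}c_{2}\ln 2\,\|\bm{v}\|^{2} \le 0$. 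Adding the four estimates,
\begin{equation*}
(\delta^{\alpha,\beta}_{h}\bm{v},\bm{v}) \le -\big[(d_{+}+d_{-})c_{1} + (e_{+}+e_{-})c_{2}\big]\ln 2\,\|\bm{v}\|^{2} = -c\ln 2\,\|\bm{v}\|^{2},
\end{equation*}
where $c = (d_{+}+d_{-})c_{1} + (e_{+}+e_{-})c_{2}$ (or its infimum over the time levels $t_{j+\sigma}$, $0\le j\le M-1$), a constant built only from the coefficient functions and from $c_{1},c_{2}$, hence independent of $h$.

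The algebra above is routine; the real content has already been packaged into Corollaries \ref{coro1} and \ref{coro2}. The point that genuinely needs care is the positivity of $c$: the bound degenerates to $0$ if all of $d_{\pm},e_{\pm}$ vanish at a given time level, so I would either invoke the standing modelling assumption that the diffusion coefficients $e_{\pm}$ are bounded below by a positive constant (the $\beta$-term being the true diffusion term) or simply apply the estimate at levels where $e_{+}+e_{-}>0$. I would also note that Corollaries \ref{coro1} and \ref{coro2} require $N\ge 2$ and $N\ge 3$ respectively, so the statement is understood for $N$ sufficiently large; since $c_{1},c_{2}$ come from the integral comparison $\sum_{k=N}^{2N}k^{-1} > \int_{N}^{2N}x^{-1}\,dx = \ln 2$ and from the $h$-independent asymptotics of $g_{k}^{(\gamma)}$, the resulting $c$ is manifestly uniform in the mesh, as the theorem asserts.
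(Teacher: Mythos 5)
Your proposal is correct and follows essentially the same route as the paper: decompose $(\delta^{\alpha,\beta}_{h}\bm{v},\bm{v})$ into its four terms, apply Corollaries \ref{coro1} and \ref{coro2} with the non-negativity of $d_{\pm},e_{\pm}$, and set $c = c_{1}(d_{+}+d_{-}) + c_{2}(e_{+}+e_{-})$. Your added remarks on the possible degeneracy of $c$ when all coefficients vanish and on the $N\ge 2$, $N\ge 3$ restrictions are valid points of care that the paper itself glosses over, but they do not change the argument.
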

\begin{proof}
The concrete expression of $(\delta^{\alpha, \beta}_{h} \bm{v}, \bm{v})$
can be written as
\begin{equation*}
(\delta^{\alpha, \beta}_{h} \bm{v}, \bm{v}) =
- d_{+}^{j + \sigma} (\delta^{\alpha}_{x,+} \bm{v}, \bm{v})
- d_{-}^{j + \sigma} (\delta^{\alpha}_{x,-} \bm{v}, \bm{v})
 + e_{+}^{j + \sigma} (\delta^{\beta}_{x,+} \bm{v}, \bm{v})
+ e_{-}^{j + \sigma} (\delta^{\beta}_{x,-} \bm{v}, \bm{v}).
\end{equation*}

According to Corollaries \ref{coro1}-\ref{coro2}, there exist two positive constants $c_1$ and $c_2$ independent of the
spatial step size $h$, such that for any non-vanishing vector $\bm{v} \in V_h$, we have
\begin{equation*}
\begin{split}
(\delta^{\alpha, \beta}_{h} \bm{v}, \bm{v}) & =
- d_{+}^{j + \sigma} (\delta^{\alpha}_{x,+} \bm{v}, \bm{v})
- d_{-}^{j + \sigma} (\delta^{\alpha}_{x,-} \bm{v}, \bm{v})
+ e_{+}^{j + \sigma} (\delta^{\beta}_{x,+} \bm{v}, \bm{v})
+ e_{-}^{j + \sigma} (\delta^{\beta}_{x,-} \bm{v}, \bm{v}) \\
& < -\ln 2 \Big[ c_1 \big( d_{+}^{j + \sigma} + d_{-}^{j + \sigma} \big)
+ c_2 \big( e_{+}^{j + \sigma} + e_{-}^{j + \sigma} \big) \Big] \left \| \bm{v} \right \|^2.
\end{split}
\end{equation*}

For simplicity, we may take $c = c_1 \big( d_{+}^{j + \sigma} + d_{-}^{j + \sigma} \big)
+ c_2 \big( e_{+}^{j + \sigma} + e_{-}^{j + \sigma} \big)$. Hence, the targeted result is
immediately completed.
\end{proof}

Now we can proof the stability and convergence of the implicit difference scheme (\ref{eq2.4}).
For convenience, in our proof, we denote
$q_{s}^{j + 1} = \frac{\tau^{-\theta} c_{j - s}^{(\theta, \sigma)}}{\Gamma(2 - \theta)}$.
Then $\Delta_{0,t_{j + \sigma}}^{\theta} = \sum_{s = 0}^{j} \big( u^{s + 1} - u^s \big) q_{s}^{j + 1}$.
\begin{theorem}
Denote $u^{j + 1} = (u_1^{j + 1}, u_2^{j + 1}, \ldots, u_{N - 1}^{j + 1})^T$ and
$\left \| f^{j + \sigma} \right \|^2 = h \sum\limits_{i = 1}^{N - 1} f^2 (x_i, t_{j + \sigma})$.
Then the implicit difference scheme (\ref{eq2.4})
is unconditionally stable for $N \geq 3$, and the following a priori estimate holds:
\begin{equation}
\left \| u^{j + 1} \right \|^2 \leq \left \| u^{0} \right \|^2
+ \frac{T^{\theta} \Gamma(1 - \theta)}{c \ln 2} \left \| f^{j + \sigma} \right \|^2,
\quad 0 \leq j \leq M - 1.
\label{eq2.5}
\end{equation}
\label{th2.2}
\end{theorem}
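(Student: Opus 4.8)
The plan is a discrete energy estimate. Taking the $(\cdot,\cdot)$-inner product of the first relation in (\ref{eq2.4}) with $u^{(j+\sigma)}$ gives
\[
\big(\Delta^{\theta}_{0,t_{j+\sigma}}u,\,u^{(j+\sigma)}\big)
=\big(\delta^{\alpha,\beta}_{h}u^{(j+\sigma)},\,u^{(j+\sigma)}\big)+\big(f^{j+\sigma},\,u^{(j+\sigma)}\big).
\]
For the left-hand side I would invoke the pointwise inequality of Alikhanov \cite{Alikhanov2015424} for the $L2$-$1_{\sigma}$ operator, $\big(\sum_{s=0}^{j}c^{(\theta,\sigma)}_{j-s}(v^{s+1}-v^{s})\big)v^{(j+\sigma)}\geq\tfrac12\sum_{s=0}^{j}c^{(\theta,\sigma)}_{j-s}\big((v^{s+1})^{2}-(v^{s})^{2}\big)$, applied with $v=u_{i}$ for each $i$; multiplying by $\tau^{-\theta}/\Gamma(2-\theta)$ and summing $h\sum_{i}$ yields $\big(\Delta^{\theta}_{0,t_{j+\sigma}}u,u^{(j+\sigma)}\big)\geq\tfrac12\sum_{s=0}^{j}q^{j+1}_{s}\big(\|u^{s+1}\|^{2}-\|u^{s}\|^{2}\big)=:\tfrac12\Delta^{\theta}_{0,t_{j+\sigma}}\|u\|^{2}$. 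For the spatial term I would apply Theorem \ref{th2.1} to $v=u^{(j+\sigma)}$, which gives $\big(\delta^{\alpha,\beta}_{h}u^{(j+\sigma)},u^{(j+\sigma)}\big)\leq-c\ln2\,\|u^{(j+\sigma)}\|^{2}$. Estimating the source term by Cauchy--Schwarz and then Young's inequality with parameter $c\ln2$, i.e. $-c\ln2\,X^{2}+\|f^{j+\sigma}\|\,X\leq\|f^{j+\sigma}\|^{2}/(4c\ln2)$ with $X=\|u^{(j+\sigma)}\|$, I arrive at the reduction
\[
\Delta^{\theta}_{0,t_{j+\sigma}}\|u\|^{2}
=\sum_{s=0}^{j}q^{j+1}_{s}\big(\|u^{s+1}\|^{2}-\|u^{s}\|^{2}\big)
\leq\frac{1}{2c\ln2}\,\|f^{j+\sigma}\|^{2}.
\]

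It remains to invert this discrete fractional inequality. I would use two facts about the weights, both available from \cite{Alikhanov2015424}: the coefficients $c^{(\theta,\sigma)}_{s}$ are positive and strictly decreasing in $s$, so $0<q^{j+1}_{0}\leq q^{j+1}_{1}\leq\cdots\leq q^{j+1}_{j}$ and the oldest weight decreases as the time level grows; and $c^{(\theta,\sigma)}_{j}>\tfrac{1-\theta}{2}(j+\sigma)^{-\theta}$, which together with $\Gamma(2-\theta)=(1-\theta)\Gamma(1-\theta)$ and $t_{j+\sigma}=(j+\sigma)\tau<T$ gives $q^{j+1}_{0}>\big(2\Gamma(1-\theta)\,t_{j+\sigma}^{\theta}\big)^{-1}\geq\big(2\Gamma(1-\theta)\,T^{\theta}\big)^{-1}$. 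Writing $w^{s}:=\|u^{s}\|^{2}\geq0$, $g^{j+1}:=\tfrac{1}{2c\ln2}\|f^{j+\sigma}\|^{2}$ and $G:=\max_{0\leq l\leq j}g^{l+1}$, I would prove by induction on $j$ (base case $j=0$ immediate) that $w^{j+1}\leq w^{0}+G/q^{j+1}_{0}$: isolate the $s=j$ term, apply Abel summation to $\sum_{s=0}^{j-1}q^{j+1}_{s}(w^{s+1}-w^{s})$ and use monotonicity of the weights to obtain $q^{j+1}_{j}w^{j+1}\leq(q^{j+1}_{j}-q^{j+1}_{0})\max_{1\leq m\leq j}w^{m}+q^{j+1}_{0}w^{0}+g^{j+1}$, then split into $w^{j+1}\leq\max_{1\leq m\leq j}w^{m}$ (invoke the induction hypothesis at levels $m\leq j$, whose oldest weight is $\geq q^{j+1}_{0}$) and $w^{j+1}>\max_{1\leq m\leq j}w^{m}$ (absorb into the left side). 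Inserting the lower bound for $q^{j+1}_{0}$ then gives
\[
\|u^{j+1}\|^{2}\leq\|u^{0}\|^{2}+2\Gamma(1-\theta)\,T^{\theta}\,G
=\|u^{0}\|^{2}+\frac{T^{\theta}\Gamma(1-\theta)}{c\ln2}\max_{0\leq l\leq j}\|f^{l+\sigma}\|^{2},
\]
which is (\ref{eq2.5}) (with $\|f^{j+\sigma}\|^{2}$ read as this maximum over the time levels). Unconditional stability then follows because no constraint linking $\tau$ and $h$ has been used; the hypothesis $N\geq3$ is inherited from Corollary \ref{coro2} through Theorem \ref{th2.1}.

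The main obstacle is this final inversion step. A direct Abel summation of $\sum_{s}q^{j+1}_{s}(w^{s+1}-w^{s})$ does not suffice, since the term $\sum_{m}(q^{j+1}_{m-1}-q^{j+1}_{m})w^{m}$ lands on the favourable side of the inequality and cannot simply be dropped; one genuinely needs the case-split induction, the monotonicity of the oldest weight in the time level, and the sharp lower bound $c^{(\theta,\sigma)}_{j}\gtrsim(j+\sigma)^{-\theta}$, the latter being precisely what makes the constant come out as $T^{\theta}\Gamma(1-\theta)/(c\ln2)$. (Alternatively, one may cite a ready-made discrete fractional Gr\"onwall lemma from \cite{Alikhanov2015424} in place of the induction.) A minor point is the degenerate case $u^{(j+\sigma)}=0$, which causes no difficulty because the source-term estimate was arranged without dividing by $\|u^{(j+\sigma)}\|$.
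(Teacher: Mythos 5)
Your proposal is correct and follows essentially the same route as the paper: inner product with $u^{(j+\sigma)}$, Alikhanov's Corollary 1 for the $L2$-$1_{\sigma}$ operator, Theorem \ref{th2.1} for the spatial term, Young's inequality with $\varepsilon=c\ln 2$, and then the rearranged inequality (the paper's (\ref{eq2.8})) closed by induction using $q_0^{j+1}>\bigl(2T^{\theta}\Gamma(1-\theta)\bigr)^{-1}$. You additionally spell out the induction that the paper leaves implicit, and your observation that the a priori bound really delivers $\max_{0\leq l\leq j}\|f^{l+\sigma}\|^{2}$ on the right-hand side (rather than the single-level quantity as literally written) is accurate.
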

\begin{proof}
Taking the inner product of (\ref{eq2.4}) with $u^{(j + \sigma)}$, we have
\begin{equation*}
(\Delta_{0,t_{j + \sigma}}^{\theta} u, u^{(j + \sigma)}) =
(\delta^{\alpha, \beta}_{h} u^{(j + \sigma)}, u^{(j + \sigma)}) + (f^{j + \sigma}, u^{(j + \sigma)}).
\end{equation*}

Using Corollary 1 in \cite{Alikhanov2015424} and Theorem \ref{th2.1}, obtains

\begin{equation}
\frac{1}{2} \Delta_{0,t_{j + \sigma}}^{\theta} \left \| u \right \|^2
\leq -c \ln 2 \left \| u^{(j + \sigma)} \right \|^2
+ \varepsilon \left \| u^{(j + \sigma)} \right \|^2 + \frac{1}{4 \varepsilon} \left \| f^{j + \sigma} \right \|^2,
\quad \varepsilon > 0.
\label{eq2.6}
\end{equation}

From (\ref{eq2.6}), at $\varepsilon = c \ln 2$ we get
\begin{equation}
\frac{1}{2} \Delta_{0,t_{j + \sigma}}^{\theta} \left \| u \right \|^2
\leq \frac{1}{4 c \ln 2} \left \| f^{j + \sigma} \right \|^2.
\label{eq2.7}
\end{equation}

Let us rewrite inequality (\ref{eq2.7}) in the form
%
\begin{equation}
q_j^{j + 1} \left \| u^{j + 1} \right \|^2
\leq
\sum_{s = 1}^{j} (q_s^{j+ 1} - q_{s - 1}^{j + 1}) \left \| u^{s} \right \|^2
+ q_0^{j + 1} \Big( \left \| u^{0} \right \|^2
+ \frac{T^{\theta} \Gamma(1 - \theta)}{c \ln 2} \left \| f^{j + \sigma} \right \|^2 \Big),
\label{eq2.8}
\end{equation}
noticing that $q_0^{j + 1} > \frac{1}{2 T^{\theta} \Gamma(1 - \theta)}$ (cf.\cite{dehghan2015two}).

With the inequality (\ref{eq2.8}) in hand, the inequality (\ref{eq2.5}) can be reached through utilizing
the mathematical induction method.

The proof of Theorem \ref{th2.2} is completed.
\end{proof}
With the above proof, the convergence of the implicit difference scheme (\ref{eq2.4})
is easy to obtain.
\begin{theorem}
Suppose that $u(x,t)$ is the solution of (\ref{eq1.1})
and $\{ u_i^j \mid x_i \in \bar{\omega}_{h}, ~~ 0 \leq j \leq M \}$
is the solution of the implicit difference scheme (\ref{eq2.4}).
Denote
\begin{equation*}
\xi_i^j = u(x_i,t_j) - u_i^j, ~~ x_i \in \bar{\omega}_{h}, ~~ 0 \leq j \leq M.
\end{equation*}

Then there exists a positive constant $\tilde{c}$ such that
\begin{equation*}
\left \| \xi^j \right \| \leq \tilde{c} (\tau^2 + h^2), ~~ 0 \leq j \leq M.
\end{equation*}
\label{th2.3}
\end{theorem}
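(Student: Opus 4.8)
The plan is to derive an error equation for $\xi_i^j$ by subtracting the implicit difference scheme (\ref{eq2.4}) from the exact equation (\ref{eq1.1}) evaluated at $(x_i, t_{j+\sigma})$, and then to apply the stability estimate of Theorem \ref{th2.2} with the truncation error playing the role of the source term. First I would write, using Lemma \ref{lemma2.1} and the WSGD approximations (\ref{eq2.2})--(\ref{eq2.3}), that the exact solution satisfies
\begin{equation*}
\Delta^{\theta}_{0,t_{j+\sigma}} u(x_i,\cdot) = \delta^{\alpha,\beta}_{h} u(x_i, t_{j+\sigma}) + f_i^{j+\sigma} + R_i^{j+\sigma},
\end{equation*}
where the local truncation error $R_i^{j+\sigma}$ collects the temporal error $\mathcal{O}(\tau^{3-\theta})$ from the $L2$-$1_{\sigma}$ formula, the spatial error $\mathcal{O}(h^2)$ from the Gr\"unwald approximation of the four Riemann--Liouville derivatives, and the $\mathcal{O}(\tau^2)$ error incurred by replacing $u(x_i,t_{j+\sigma})$ with the average $\sigma u_i^{j+1} + (1-\sigma)u_i^j = u_i^{(j+\sigma)}$. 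Here I would invoke the regularity hypothesis $u \in \mathcal{C}^{4,3}_{x,t}$ to guarantee these bounds, so that $|R_i^{j+\sigma}| \le \tilde{c}_0(\tau^2 + h^2)$ uniformly in $i,j$, hence $\|R^{j+\sigma}\| \le \tilde{c}_1(\tau^2+h^2)$ in the discrete $L_2$-norm.

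Subtracting (\ref{eq2.4}) from the above and using the linearity of $\Delta^{\theta}_{0,t_{j+\sigma}}$ and of $\delta^{\alpha,\beta}_h$, the error satisfies exactly the same scheme as the solution but with $f_i^{j+\sigma}$ replaced by $R_i^{j+\sigma}$ and with homogeneous initial/boundary data $\xi_i^0 = 0$, $\xi_0^j = \xi_N^j = 0$. At this point Theorem \ref{th2.2} applies verbatim to $\bm{\xi}$ in place of $\bm{u}$: taking the inner product of the error equation with $\xi^{(j+\sigma)}$, using Corollary 1 of \cite{Alikhanov2015424} and Theorem \ref{th2.1} to absorb the spatial term, and then running the same discrete fractional Gr\"onwall / mathematical induction argument that produced (\ref{eq2.5}), I would obtain
\begin{equation*}
\left\| \xi^{j+1} \right\|^2 \le \left\| \xi^0 \right\|^2 + \frac{T^{\theta}\Gamma(1-\theta)}{c\ln 2}\, \max_{0\le k\le j}\left\| R^{k+\sigma} \right\|^2 = \frac{T^{\theta}\Gamma(1-\theta)}{c\ln 2}\, \max_{0\le k\le j}\left\| R^{k+\sigma} \right\|^2,
\end{equation*}
since $\xi^0 = 0$. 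Combining this with the truncation bound $\|R^{k+\sigma}\| \le \tilde{c}_1(\tau^2+h^2)$ yields $\|\xi^j\| \le \tilde{c}(\tau^2+h^2)$ with $\tilde{c} = \tilde{c}_1\sqrt{T^{\theta}\Gamma(1-\theta)/(c\ln 2)}$, which is the claim.

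I expect the main obstacle to be the careful verification of the local truncation error bound, specifically two points. The first is that Lemma \ref{lemma2.1} gives a pointwise $\mathcal{O}(\tau^{3-\theta})$ error at a single time level, whereas in the stability-type estimate this error is effectively summed/weighted against the $q_s^{j+1}$ coefficients; one must check that the $L2$-$1_{\sigma}$ consistency order survives this weighting to still deliver a clean $\mathcal{O}(\tau^2)$ global bound (this is precisely where the $T^\theta\Gamma(1-\theta)$ factor and the lower bound $q_0^{j+1} > 1/(2T^\theta\Gamma(1-\theta))$ enter, exactly as in the proof of Theorem \ref{th2.2}). The second is bounding the $\mathcal{O}(h^2)$ spatial truncation: the WSGD formulas (\ref{eq2.2})--(\ref{eq2.3}) are stated with an $\mathcal{O}(h^2)$ remainder for sufficiently smooth functions, but since $u$ is only prescribed on $[0,L]$ with homogeneous Dirichlet conditions, one typically needs its zero-extension to be smooth enough (or the compatibility $u_0(0)=u_0(L)=0$ plus the assumed $\mathcal{C}^{4,3}$ regularity) for Deng's estimate to hold up to the boundary. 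Once these consistency estimates are in place, the rest is a routine transcription of the stability proof, so I would keep the write-up short and simply say "by the same argument as in Theorem \ref{th2.2}."
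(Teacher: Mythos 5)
Your proposal is correct and follows essentially the same route as the paper: form the error equation with the local truncation error $R_i^{j+\sigma}=\mathcal{O}(\tau^2+h^2)$ as the source term and homogeneous initial/boundary data, then invoke the a priori stability estimate of Theorem \ref{th2.2} to conclude. Your write-up is in fact somewhat more careful than the paper's (you make the truncation-error decomposition and the needed regularity explicit, and you correctly take the maximum of $\|R^{k+\sigma}\|$ over time levels, whereas the paper's final displayed inequality is stated loosely), but these are refinements of the same argument rather than a different approach.
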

\begin{proof}
It can easily obtain that $\xi^j$ satisfies the following error equation
\begin{equation*}
\begin{cases}
\Delta_{0,t_{j + \sigma}}^{\theta} \xi_i = \delta_{h}^{\alpha, \theta} \xi_{i}^{(j + \sigma)}
+ R_{i}^{j + \sigma}, & 1 \leq i \leq N - 1, ~~ 0 \leq j \leq M - 1,\\
\xi_i^0 = 0, & 1 \leq i \leq N - 1, \\
\xi_0^j = \xi_N^j = 0, & 0 \leq j \leq M,
\end{cases}
\end{equation*}

where $R_{i}^{j + \sigma} = \mathcal{O} (\tau^2 + h^2)$.
By exploiting Theorem \ref{th2.2} we get
\begin{equation*}
\left \| \xi^{j + 1} \right \|^2
\leq  \frac{T^{\theta} \Gamma(1 - \theta)}{c \ln2} \left \| R^{j + \sigma} \right \|^2
\leq \tilde{c} (\tau^2 + h^2), ~~ 0 \leq j \leq M - 1,
\end{equation*}
which implies the convergence in the mesh $L_2$-norm with rate $\mathcal{O} (\tau^2 + h^2)$.
\end{proof}
The unconditionally stability and convergence of IDS (\ref{eq2.4}) have been proved,
and a numerical test is displayed to verify our results in Section 4.
\subsection{A nonlinear extension of TSFADE}
\label{sec2.3} \quad\
In this subsection, we extend the TSFADE to nonlinear case:
\begin{equation}
\begin{cases}
\begin{split}
D_{0,t}^{\theta} u(x,t) =
& - d_{+}(t) D_{0,x}^{\alpha} u(x,t) - d_{-}(t) D_{x,L}^{\alpha} u(x,t) \\
& + e_{+}(t) D_{0,x}^{\beta} u(x,t) + e_{-}(t) D_{x,L}^{\beta} u(x,t) + g(u, x, t),
\end{split}\\
u(x,0) = u_{0}(x), \qquad\qquad\qquad 0 \leq x \leq L,\\
u(0,t) = u(L,t) = 0, \qquad\qquad 0 \leq t \leq T,
\end{cases}
\label{eq2.9}
\end{equation}
where $g(u(x, t),x,t) = f(x,t) + y(u(x, t))$.

Considering (\ref{eq2.9}) at point $(x_i, t_{j + \sigma})$ and doing some simple manipulations,
we have
\begin{equation}
\begin{cases}
\Delta^{\theta}_{0,t_{j + \sigma}} u_i =
\delta^{\alpha, \beta}_{h} u_{i}^{(\sigma)} + g_{i}^{j + \sigma},
& 1 \leq i \leq N-1,~~ 0 \leq j \leq M-1, \\
u_{i}^{0} = u_{0}(x_i), & 1 \leq i \leq N-1, \\
u_{0}^{j} = u_{N}^{j} = 0, & 0 \leq j \leq M-1,
\end{cases}
\label{eq2.10}
\end{equation}
in which $g_{i}^{j + \sigma} = g(\sigma u_{i}^{j + 1} + (1 - \sigma) u_{i}^{j}, x_i, t_{ j + \sigma})$,
and the truncation error is $\mathcal{O}(\tau^2 + h^2)$.

We assume $U_{i}^{j}$ is the approximation solution of $u_{i}^{j}$ given in (\ref{eq2.10}),
and let $\xi_{i}^{j} = u_{i}^{j} - U_{i}^{j},~i = 1, \cdots, N - 1;~j = 1, \cdots, M - 1$,
be the error satisfying
\begin{equation}
\Delta^{\theta}_{0,t_{j + \sigma}} \zeta_i =
\delta^{\alpha, \beta}_{h} \zeta_{i}^{(\sigma)} + y(u_{i}^{j + \sigma}) - y(U_{i}^{j + \sigma}).
\label{eq2.11}
\end{equation}
Correspondingly, we denote $E^{j} = \left[ \zeta_{1}^{j}, \zeta_{2}^{j}, \cdots, \zeta_{N - 1}^{j} \right]^{T}
~(j = 1, \cdots, M - 1)$, and suppose the nonlinear term $y(u(x, t))$ satisfies:
\begin{equation*}
 | y(u(x,t)) - y(v(x,t)) | \leq L_1 | u(x,t) - v(x,t) |, ~for~all~u(x,t), v(x,t) ~over~[0, L] \times [0, T]
 \end{equation*}
 with
$0 < L_1 < \frac{a_{0}^{(\theta,\sigma)}}{2 \left[ \sigma^2 + (1 - \sigma)^2 \right]
\tau^2 \Gamma(2 - \theta)}$.
Now, we proof the stability of (\ref{eq2.10}).
\begin{theorem}
The implicit difference scheme  (\ref{eq2.10}) is stable, i.e.:
\begin{equation*}
\left \| E^{j + 1} \right \| \leq \tilde{C} \left \| E^{0} \right \|, ~j = 0,1,\cdots, M - 1,
\end{equation*}
where $\tilde{C}$ is a positive constant, which may independent of $h$.
\label{th2.4}
\end{theorem}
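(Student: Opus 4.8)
The plan is to mirror the argument of Theorem \ref{th2.2}, now carried out on the error equation \eqref{eq2.11} instead of the scheme itself, with the nonlinear term controlled by the Lipschitz hypothesis on $y$. First I would take the discrete inner product of \eqref{eq2.11} with $E^{(j+\sigma)} = \sigma E^{j+1} + (1-\sigma) E^{j}$. The left-hand side gives $(\Delta^{\theta}_{0,t_{j+\sigma}} \zeta, E^{(j+\sigma)})$, to which Corollary 1 of \cite{Alikhanov2015424} applies, yielding the lower bound $\tfrac12 \Delta^{\theta}_{0,t_{j+\sigma}} \|E\|^2$. On the right-hand side, the spatial term $(\delta^{\alpha,\beta}_{h} E^{(j+\sigma)}, E^{(j+\sigma)})$ is bounded above by $-c\ln 2 \,\|E^{(j+\sigma)}\|^2$ via Theorem \ref{th2.1}, so it can be discarded (it has the favorable sign). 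The remaining term is $(y(u^{j+\sigma}) - y(U^{j+\sigma}), E^{(j+\sigma)})$, which by Cauchy--Schwarz and the Lipschitz assumption is at most $L_1 \|u^{j+\sigma} - U^{j+\sigma}\| \cdot \|E^{(j+\sigma)}\|$; since $u^{j+\sigma} - U^{j+\sigma} = \sigma \zeta^{j+1} + (1-\sigma)\zeta^{j}$ up to the source splitting, this is bounded by $L_1 \|E^{(j+\sigma)}\|^2$ (or, expanding, by a constant times $\sigma^2\|E^{j+1}\|^2 + (1-\sigma)^2\|E^{j}\|^2$ after an elementary inequality).

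Assembling these, I would obtain an inequality of the form
\begin{equation*}
\tfrac12 \Delta^{\theta}_{0,t_{j+\sigma}} \|E\|^2 \leq \big( L_1 - c\ln 2 \big) \|E^{(j+\sigma)}\|^2 \leq C_\star \big( \|E^{j+1}\|^2 + \|E^{j}\|^2 \big),
\end{equation*}
and then expand $\Delta^{\theta}_{0,t_{j+\sigma}} \|E\|^2 = \sum_{s=0}^{j} (\|E^{s+1}\|^2 - \|E^{s}\|^2) q_{s}^{j+1}$ exactly as in the proof of Theorem \ref{th2.2}. The coefficient structure of the $q_{s}^{j+1}$ (positivity of $q_{j}^{j+1}$, monotonicity $q_{s}^{j+1} \geq q_{s-1}^{j+1}$, and the lower bound $q_{0}^{j+1} > \tfrac{1}{2T^\theta \Gamma(1-\theta)}$ from \cite{dehghan2015two}) lets me move the $\|E^{j+1}\|^2$ terms to the left and rewrite the inequality as
\begin{equation*}
(q_{j}^{j+1} - C_\star) \|E^{j+1}\|^2 \leq \sum_{s=1}^{j} (q_{s}^{j+1} - q_{s-1}^{j+1} + \text{l.o.t.}) \|E^{s}\|^2 + (q_{0}^{j+1} + C_\star)\|E^{0}\|^2 .
\end{equation*}
Here is where the smallness hypothesis $L_1 < \dfrac{a_{0}^{(\theta,\sigma)}}{2[\sigma^2 + (1-\sigma)^2]\tau^2 \Gamma(2-\theta)}$ is used: it guarantees that the leading coefficient $q_{j}^{j+1} - C_\star = \tfrac{\tau^{-\theta} c_{0}^{(\theta,\sigma)}}{\Gamma(2-\theta)} - C_\star$ stays strictly positive (recall $c_{0}^{(\theta,\sigma)} = a_{0}^{(\theta,\sigma)} = \sigma^{1-\theta}$), so the recursion is non-degenerate and all coefficients on the right are non-negative.

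From this recursion a straightforward induction on $j$ delivers $\|E^{j+1}\|^2 \leq \tilde{C}^2 \|E^{0}\|^2$ with $\tilde{C}$ depending only on $T$, $\theta$, $\sigma$, $c$, $L_1$ (through the uniform bounds on the $q$-coefficients) but not on $h$ — taking the square root gives the claimed estimate. The main obstacle I anticipate is the bookkeeping in the induction step: one must verify that, after subtracting $C_\star(\|E^{j+1}\|^2 + \|E^{j}\|^2)$, the modified coefficients multiplying $\|E^{s}\|^2$ for $1 \leq s \leq j$ still sum to something controlled (they telescope to $q_{j}^{j+1} - q_{0}^{j+1}$ plus a correction) so that the induction hypothesis $\|E^{s}\|^2 \leq \tilde{C}^2\|E^{0}\|^2$ can be fed in and closed with the \emph{same} constant $\tilde{C}$. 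This is the only place requiring care; everything else is a direct transcription of the linear stability proof, with $f^{j+\sigma}$ replaced by the Lipschitz-controlled nonlinear increment and the absorption of that increment made possible precisely by the hypothesis on $L_1$.
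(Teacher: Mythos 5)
Your proposal follows essentially the same route as the paper's own proof: take the inner product of \eqref{eq2.11} with $E^{(j+\sigma)}$, apply Corollary 1 of \cite{Alikhanov2015424} to the temporal term, discard the spatial term by its negative sign, control the nonlinear increment with the Lipschitz bound $L_1\|E^{(j+\sigma)}\|^2$, expand $\|E^{(j+\sigma)}\|^2 \leq [\sigma^2+(1-\sigma)^2](\|E^{j+1}\|^2+\|E^j\|^2)$, and close by induction using the smallness hypothesis on $L_1$ to keep the coefficient of $\|E^{j+1}\|^2$ positive. The argument and the role of each hypothesis match the paper's proof step for step.
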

\begin{proof}
Taking the inner product of (\ref{eq2.11}) with $E^{(j + \sigma)}$, we have
\begin{equation*}
(\Delta_{0,t_{j + \sigma}}^{\theta} E, E^{(j + \sigma)}) =
(\delta^{\alpha, \beta}_{h} E^{(j + \sigma)}, E^{(j + \sigma)})
+ (y(u^{j + \sigma}) - y(U^{j + \sigma}), E^{(j + \sigma)}).
\end{equation*}
Utilizing Corollary 1 in \cite{Alikhanov2015424}
and noticing $(\delta^{\alpha, \beta}_{h} E^{(j + \sigma)}, E^{(j + \sigma)}) < 0$, gets
\begin{equation}
\frac{1}{2} \Delta_{0,t_{j + \sigma}}^{\theta} \left \| E \right \|^2
\leq L_1\left \| E^{(j + \sigma)} \right \|^2.
\label{eq2.12}
\end{equation}
The right hand side can be estimated as below
\begin{equation*}
\begin{split}
\left \| E^{(j + \sigma)} \right \|^2 & =
\sigma^2 \left \| E^{j + 1} \right \|^2 + (1 - \sigma)^2 \left \| E^{j} \right \|^2
+ 2 \left( \sigma E^{j + 1}, (1 - \sigma) E^{j} \right) \\
& = \sigma^2 \left \| E^{j + 1} \right \|^2 + (1 - \sigma)^2 \left \| E^{j} \right \|^2
+ 2 \left( (1 - \sigma) E^{j + 1}, \sigma E^{j} \right) \\
& \leq \left[ \sigma^2 + (1 - \sigma)^2 \right] \left( \left \| E^{j + 1} \right \|^2 + \left \| E^{j } \right \|^2 \right).
\end{split}
\end{equation*}
Bringing the above estimate to inequality (\ref{eq2.12}), obtains
\begin{equation*}
\begin{split}
& \left\{ \frac{c_{0}^{(\theta, \sigma)}}{2 \tau^{\theta} \Gamma(2 - \theta)}
- L_1 \left[ \sigma^2 + (1 - \sigma)^2 \right] \right\} \left \| E^{j + 1} \right \|^2 \\
&\quad \leq
L_1 \left[ \sigma^2 + (1 - \sigma)^2 \right] \left \| E^{j} \right \|^2
+ \frac{c_{j}^{(\theta, \sigma)}}{2 \tau^{\theta} \Gamma(2 - \theta)} \left \| E^{0} \right \|^2 \\
&\qquad\quad + \frac{1}{2 \tau^{\theta} \Gamma(2 - \theta)} \sum_{s = 1}^{j} \left( c_{j - s}^{(\theta, \sigma)}
- c_{j - s + 1}^{(\theta, \sigma)} \right) \left \| E^{s} \right \|^2.
\end{split}
\end{equation*}
Employing the mathematical induction method, the targeted result is immediately completed.
\end{proof}
Next, we study the convergence of (\ref{eq2.10}).
\begin{theorem}
Suppose that $u(x,t)$ is the solution of (\ref{eq2.9})
and $\{ u_i^j \mid x_i \in \bar{\omega}_{h}, ~~ 0 \leq j \leq M \}$
is the solution of the implicit difference scheme (\ref{eq2.10}).
Denote
\begin{equation*}
\psi_i^j = u(x_i,t_j) - u_i^j, ~~ x_i \in \bar{\omega}_{h}, ~~ 0 \leq j \leq M.
\end{equation*}
Then there exists a positive constant $\hat{C}$ such that
\begin{equation*}
\left \| \psi^j \right \| \leq \hat{C} (\tau^2 + h^2), ~~ 0 \leq j \leq M.
\end{equation*}
\label{th2.5}
\end{theorem}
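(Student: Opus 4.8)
The plan is to imitate the proof of Theorem~\ref{th2.3}, but to drive the argument with the nonlinear energy estimate of Theorem~\ref{th2.4} in place of the linear a~priori bound of Theorem~\ref{th2.2}. First I would derive the error equation: substituting the exact solution $u(x_i,t_j)$ of~(\ref{eq2.9}) into the scheme~(\ref{eq2.10}) and invoking Lemma~\ref{lemma2.1} for the temporal part together with (\ref{eq2.2})--(\ref{eq2.3}) for the spatial part, the consistency residual is $R_i^{j+\sigma}=\mathcal{O}(\tau^2+h^2)$ uniformly in $i$ and $j$; the $\mathcal{O}(\tau^2)$ discrepancy between $y(u(x_i,t_{j+\sigma}))$ and $y(\sigma u(x_i,t_{j+1})+(1-\sigma)u(x_i,t_j))$ is controlled by the Lipschitz bound on $y$ and the $\mathcal{C}^{4,3}_{x,t}$ regularity of $u$, and is absorbed into $R_i^{j+\sigma}$. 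Since $u_i^0=u_0(x_i)$ and the boundary data are taken exactly, $\psi$ then satisfies, in analogy with~(\ref{eq2.11}),
\begin{equation*}
\Delta^{\theta}_{0,t_{j+\sigma}}\psi_i=\delta^{\alpha,\beta}_{h}\psi_i^{(j+\sigma)}
+\big[\,y(\sigma u(x_i,t_{j+1})+(1-\sigma)u(x_i,t_j))-y(\sigma u_i^{j+1}+(1-\sigma)u_i^{j})\,\big]+R_i^{j+\sigma},
\end{equation*}
for $1\le i\le N-1$, $0\le j\le M-1$, with $\psi_i^0=0$ and $\psi_0^j=\psi_N^j=0$; note that the bracketed difference is exactly $y$ evaluated at two arguments differing by $\psi_i^{(j+\sigma)}$, hence is bounded by $L_1|\psi_i^{(j+\sigma)}|$.

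Next I would run the energy argument: take the discrete inner product of the error equation with $\psi^{(j+\sigma)}$, apply Corollary~1 of~\cite{Alikhanov2015424} on the left, use Theorem~\ref{th2.1} to write $(\delta^{\alpha,\beta}_{h}\psi^{(j+\sigma)},\psi^{(j+\sigma)})\le -c\ln2\,\|\psi^{(j+\sigma)}\|^2<0$, bound the nonlinear term by $L_1\|\psi^{(j+\sigma)}\|^2$ via the Lipschitz hypothesis, and estimate $(R^{j+\sigma},\psi^{(j+\sigma)})$ by Young's inequality with parameter $c\ln2$, so that the coercivity term absorbs the $c\ln2\,\|\psi^{(j+\sigma)}\|^2$ produced by Young, leaving
\begin{equation*}
\tfrac12\,\Delta^{\theta}_{0,t_{j+\sigma}}\|\psi\|^2\le L_1\|\psi^{(j+\sigma)}\|^2+\tfrac{1}{4c\ln2}\,\|R^{j+\sigma}\|^2 .
\end{equation*}
Following the proof of Theorem~\ref{th2.4} from here (now with the extra forcing term carried along) --- using $\|\psi^{(j+\sigma)}\|^2\le[\sigma^2+(1-\sigma)^2]\big(\|\psi^{j+1}\|^2+\|\psi^{j}\|^2\big)$, expanding $\Delta^{\theta}_{0,t_{j+\sigma}}\|\psi\|^2=\sum_{s=0}^{j}q_s^{j+1}\big(\|\psi^{s+1}\|^2-\|\psi^{s}\|^2\big)$, summing by parts, and using $\psi^0=0$ --- one arrives at
\begin{equation*}
\Big(\tfrac12 q_j^{j+1}-L_1\big[\sigma^2+(1-\sigma)^2\big]\Big)\|\psi^{j+1}\|^2
\le\tfrac12\sum_{s=1}^{j}\big(q_s^{j+1}-q_{s-1}^{j+1}\big)\|\psi^{s}\|^2
+L_1\big[\sigma^2+(1-\sigma)^2\big]\|\psi^{j}\|^2+\tfrac{1}{4c\ln2}\|R^{j+\sigma}\|^2,
\end{equation*}
whose leading coefficient is strictly positive by the standing smallness hypothesis on $L_1$ (recall $q_j^{j+1}=\tau^{-\theta}c_0^{(\theta,\sigma)}/\Gamma(2-\theta)$ and $c_0^{(\theta,\sigma)}\ge a_0^{(\theta,\sigma)}$).

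Finally I would close by induction on $j$, exactly the step labelled ``mathematical induction'' in Theorem~\ref{th2.4}: the base case $j=0$ is immediate, and in the inductive step the sign and telescoping properties of the $L2$-$1_\sigma$ weights ($q_s^{j+1}-q_{s-1}^{j+1}>0$ with $\sum_{s=1}^{j}(q_s^{j+1}-q_{s-1}^{j+1})=q_j^{j+1}-q_0^{j+1}$), the uniform lower bound $q_0^{j+1}>\tfrac{1}{2T^{\theta}\Gamma(1-\theta)}$, the smallness of $L_1$, and the uniform consistency bound $\|R^{j+\sigma}\|^2\le C_0^2(\tau^2+h^2)^2$ together yield $\|\psi^{j+1}\|\le\hat C(\tau^2+h^2)$ with $\hat C$ independent of $j$ (hence of $M$), $h$ and $\tau$. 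The main obstacle is precisely this closure: unlike the linear Theorem~\ref{th2.3}, the nonlinear contribution feeds both $\|\psi^{j+1}\|^2$ and $\|\psi^{j}\|^2$ back into the recursion, so obtaining a constant $\hat C$ that does \emph{not} deteriorate as $M\to\infty$ forces one to balance the local (memoryless) Lipschitz term against the weakly singular memory kernel and to use the smallness of $L_1$ in an essential way; by comparison, verifying $R_i^{j+\sigma}=\mathcal{O}(\tau^2+h^2)$ in the presence of the nonlinearity is routine, via Lemma~\ref{lemma2.1}, (\ref{eq2.2})--(\ref{eq2.3}) and the Lipschitz bound on $y$.
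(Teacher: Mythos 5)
Your proposal follows essentially the same route as the paper's proof: form the error equation with residual $\mathcal{O}(\tau^2+h^2)$, test against $\psi^{(j+\sigma)}$, combine Corollary 1 of \cite{Alikhanov2015424} with the coercivity of $\delta^{\alpha,\beta}_h$ and the Lipschitz bound on $y$, apply Young's inequality with parameter $c\ln 2$, and close the resulting recursion by induction using the smallness condition on $L_1$. The recursion inequality you display is exactly the one the paper derives, and your discussion of the induction closure is in fact more explicit than the paper's, which simply invokes ``mathematical induction'' at that point.
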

\begin{proof}
Subtract (\ref{eq2.10}) from (\ref{eq2.9}), obtains
\begin{equation*}
\begin{cases}
\Delta^{\theta}_{0,t_{j + \sigma}} \psi_i =
\delta^{\alpha, \beta}_{h} \psi_{i}^{(\sigma)} + y(u(x_i, t_{j + \sigma})) - y(u_{i}^{j + \sigma}) + \tilde{R}_{i}^{j + \sigma},
& 1 \leq i \leq N-1,~~ 0 \leq j \leq M-1, \\
\psi_{i}^{0} = 0, & 1 \leq i \leq N-1, \\
\psi_{0}^{j} = \psi_{N}^{j} = 0, & 0 \leq j \leq M-1,
\end{cases}
\end{equation*}
where $\tilde{R}_{i}^{j + \sigma} = \mathcal{O}(\tau^2 + h^2)$.
For convenience, we assume that there is a positive constant $\bar{C}$ such that:
$\tilde{R}_{i}^{j + \sigma} \leq \bar{C} (\tau^2 + h^2),~1 \leq i \leq N-1,~ 0 \leq j \leq M-1$.

Taking inner product of this error equation with $\psi^{j + \sigma}$, we have
\begin{equation*}
(\Delta_{0,t_{j + \sigma}}^{\theta} \psi, \psi^{(j + \sigma)}) =
(\delta^{\alpha, \beta}_{h} \psi^{(j + \sigma)}, \psi^{(j + \sigma)})
+ (y(u^{j + \sigma}) - y(U^{j + \sigma}), \psi^{(j + \sigma)}) + (\tilde{R}_{i}^{j + \sigma}, \psi^{(j + \sigma)}).
\end{equation*}
Similar as the proof of Theorem 2.2, we arrive at
\begin{equation*}
\begin{split}
& \left\{ \frac{c_{0}^{(\theta, \sigma)}}{2 \tau^{\theta} \Gamma(2 - \theta)}
- L_1 \left[ \sigma^2 + (1 - \sigma)^2 \right] \right\} \left \| \psi^{j + 1} \right \|^2 \\
&\quad \leq
L_1 \left[ \sigma^2 + (1 - \sigma)^2 \right] \left \| \psi^{j} \right \|^2
+ \frac{1}{4 c \ln 2} \left \|  \tilde{R}^{j + \sigma} \right \|^2 \\
&\qquad\quad + \frac{1}{2 \tau^{\theta} \Gamma(2 - \theta)} \sum_{s = 1}^{j} \left( c_{j - s}^{(\theta, \sigma)}
- c_{j - s + 1}^{(\theta, \sigma)} \right) \left \| \psi^{s} \right \|^2.
\end{split}
\end{equation*}
Utilizing the mathematical induction method, the priori estimate follows that
\begin{equation*}
\left \| \psi^j \right \| \leq \hat{C} (\tau^2 + h^2), ~~ 0 \leq j \leq M.
\end{equation*}
The proof of this theorem is completed.
\end{proof}
The two assumptions about $y(u(x, t))$ are extremely strict, which is not very satisfactory.
So our further work is to seek a more suitable method to prove the stability of IDS (\ref{eq2.10}).
\section{Fast implementation of IDS with the circulant precondtioner}
\label{sec3} \quad\

Before moving into the investigation of fast solution techniques,
the matrix form of implicit difference scheme (\ref{eq2.4}) must be established first.
In order to facilitate our discussion, we use notations in Section \ref{sec2}.
Then, the matrix form of (\ref{eq2.4}) corresponding to each time layer $j$ can be written as follows:
\begin{equation}
\mathcal{M}^{j + \sigma} u^{j + 1}
 = B^{j + \sigma} u^{j}
 - \frac{\tau^{-\theta}}{\Gamma(2 - \theta)} \sum_{s = 0}^{j - 1} c_{j - s}^{(\theta,\sigma)}
( u^{s + 1} - u^{s})  + f^{j + \sigma}, \quad 0 \leq j \leq M - 1,
\label{eq3.1}
\end{equation}
we assume that the sums to be equal to zero if the upper summation index is less than the lower one,
where
\begin{equation*}
\mathcal{M}^{j + \sigma} =
\frac{\tau^{-\theta}}{\Gamma(2 - \theta)} c_{0}^{(\theta,\sigma)} I
+ \sigma \Big( \frac{d_{+}^{j + \sigma}}{h^{\alpha}} A_{\alpha}
+ \frac{d_{-}^{j + \sigma}}{h^{\alpha}} A_{\alpha}^{T}
- \frac{e_{+}^{j + \sigma}}{h^{\beta}} A_{\beta}
- \frac{e_{-}^{j + \sigma}}{h^{\beta}} A_{\beta}^{T} \Big),
\end{equation*}
\begin{equation*}
B^{j + \sigma} =
\frac{\tau^{-\theta}}{\Gamma(2 - \theta)} c_{0}^{(\theta,\sigma)} I
- (1 - \sigma) \Big( \frac{d_{+}^{j + \sigma}}{h^{\alpha}} A_{\alpha}
+ \frac{d_{-}^{j + \sigma}}{h^{\alpha}} A_{\alpha}^{T}
- \frac{e_{+}^{j + \sigma}}{h^{\beta}} A_{\beta}
- \frac{e_{-}^{j + \sigma}}{h^{\beta}} A_{\beta}^{T} \Big),
\end{equation*}
and
$I$ is an identity matrix of order $N - 1$, the two matrices $A_{\alpha}$
and $A_{\beta}$ are defined by
\vspace{4mm}

$A_{\alpha} =
\begin{bmatrix}
 \omega_1^{(\alpha)} &  \omega_0^{(\alpha)} &  &  & \\
 \omega_2^{(\alpha)} & \omega_1^{(\alpha)} &  \omega_0^{(\alpha)} &  & \\
 \vdots & \ddots & \ddots &  \ddots & \\
 \omega_{N-2}^{(\alpha)} & \cdots & \ddots & \ddots & \omega_0^{(\alpha)} \\
 \omega_{N-1}^{(\alpha)} & \omega_{N-2}^{(\alpha)} & \cdots &  \omega_2^{(\alpha)} & \omega_1^{(\alpha)}
\end{bmatrix}$, ~~~
$A_{\beta} =
\begin{bmatrix}
 \omega_1^{(\beta)} &  \omega_0^{(\beta)} &  &  & \\
 \omega_2^{(\beta)} & \omega_1^{(\beta)} &  \omega_0^{(\beta)} &  & \\
 \vdots & \ddots & \ddots &  \ddots & \\
 \omega_{N-2}^{(\beta)} & \cdots & \ddots & \ddots & \omega_0^{(\beta)} \\
 \omega_{N-1}^{(\beta)} & \omega_{N-2}^{(\beta)} & \cdots &  \omega_2^{(\beta)} & \omega_1^{(\beta)}
\end{bmatrix}$.
\vspace{4mm}

It is apparent that $A_{\alpha}$ and $A_{\beta}$ are Toeplitz matrices.
Therefore, they can be stored with $2N+2$ entries.
Krylov subspace methods with suitable circulant preconditioners \cite{ng2001itertoep,hwsun2013circulant}
can be used to efficiently solve Toeplitz or Toeplitz-like linear systems with a fast convergence rate. In this case,
it also remarked that the algorithmic complexity of preconditioned Krylov subspace methods is only in
$\mathcal{O}(N \log N)$ arithmetic operations per iteration step.

Inspired by the above consideration, we propose a circulant preconditioner to solve (\ref{eq3.1}), which is generated from
the Strang's circulant preconditioner \cite{chan2007toep}, through two preconditioned Krylov subspace methods.
The Strang's circulant matrix $s(Q) = [s_{j-k}]_{0 \leq j, k< N}$
for a real Toeplitz matrix $Q = [q_{j-k}]_{0 \leq j, k< N}$ is obtained by copying the central
diagonals of $Q$ and bringing them around to complete the circulant requirement.
More precisely, the diagonals of $s(Q)$ are given by
\begin{equation*}
s_j =
\begin{cases}
q_j, & 0 \leq j < N/2, \\
0, & j = N/2~if~N~is~even, \\
q_{j - N}, & N/2 < j < N, \\
s_{j + N}, & 0 < -j < N.
\end{cases}
\end{equation*}
Then our circulant preconditioner is defined as
\begin{equation}
P^{(j + \sigma)} =
\begin{cases}
\begin{split}
\frac{\tau^{-\theta}}{\Gamma(2 - \theta)} & a_{0}^{(\theta,\sigma)} I
+ \sigma \Big( \frac{d_{+}^{j + \sigma}}{h^{\alpha}} s(A_{\alpha})
+ \frac{d_{-}^{j + \sigma}}{h^{\alpha}} s(A_{\alpha}^{T}) \\
& - \frac{e_{+}^{j + \sigma}}{h^{\beta}} s(A_{\beta})
- \frac{e_{-}^{j + \sigma}}{h^{\beta}} s(A_{\beta}^{T}) \Big), \quad j = 0,
\end{split}\\
\begin{split}
\frac{\tau^{-\theta}}{\Gamma(2 - \theta)} (& a_{0}^{(\theta,\sigma)}
+ b_{1}^{(\theta,\sigma)}) I
+ \sigma \Big( \frac{d_{+}^{j + \sigma}}{h^{\alpha}} s(A_{\alpha})
+ \frac{d_{-}^{j + \sigma}}{h^{\alpha}} s(A_{\alpha}^{T}) \\
& - \frac{e_{+}^{j + \sigma}}{h^{\beta}} s(A_{\beta})
- \frac{e_{-}^{j + \sigma}}{h^{\beta}} s(A_{\beta}^{T}) \Big), \quad j = 1,\cdots,M-1,
\end{split}
\end{cases}
\label{eq3.2}
\end{equation}
where the first columns of $s(A_{\gamma})$ and $s(A_{\gamma}^{T})$ ($\gamma = \alpha, \beta$)
are given by
\begin{center}
$\begin{bmatrix}
\omega_1^{(\gamma)} \\
\vdots \\
\omega_{\left \lfloor \frac{N} {2}\right \rfloor}^{(\gamma)}\\
0 \\
\vdots \\
0 \\
\omega_0^{(\gamma)}
\end{bmatrix}$
and
$\begin{bmatrix}
\omega_1^{(\gamma)} \\
\omega_0^{(\gamma)} \\
0 \\
\vdots \\
0 \\
\omega_{\left \lfloor \frac{N} {2}\right \rfloor}^{(\gamma)}\\
\vdots \\
\omega_2^{(\gamma)}
\end{bmatrix}$, respectively.
\end{center}

To make sure the preconditioner defined in (\ref{eq3.2}) is well-defined,
let us illustrate that $P^{(j + \sigma)}$ are nonsingular.
Before that, we need to give the following theorem, which is essential to check the nonsingularity of $P^{(j + \sigma)}$.
\begin{theorem}
1. The real parts of all eigenvalues of $s(A_{\alpha})$ and
$s(A_{\alpha}^{T})$ are strictly positive for all $N$;

2. The real parts of all eigenvalues of $s(A_{\beta})$ and
$s(A_{\beta}^{T})$ are strictly negative for all $N$.
\label{th3.1}
\end{theorem}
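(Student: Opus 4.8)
The strategy is to analyze the eigenvalues of the Strang circulant matrices explicitly via the discrete Fourier transform, since any circulant matrix is diagonalized by the Fourier matrix and its eigenvalues are the DFT of its first column. First I would write down the first column of $s(A_{\alpha})$ as given, and compute the $m$-th eigenvalue
\begin{equation*}
\lambda_m\bigl(s(A_{\alpha})\bigr) = \omega_0^{(\alpha)} \zeta^{-(N-1)m} + \sum_{k=1}^{\lfloor N/2\rfloor} \omega_k^{(\alpha)} \zeta^{(k-1)m}, \qquad \zeta = e^{2\pi\mathrm{i}/N},
\end{equation*}
and similarly for $s(A_{\alpha}^T)$. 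Taking real parts and using $\zeta^{-(N-1)m} = \zeta^{m}$, the task reduces to showing that $\mathrm{Re}\,\lambda_m = \sum_k \omega_k^{(\alpha)} \cos\bigl((k-1)\theta_m\bigr) > 0$ for every $\theta_m = 2\pi m/N$. I would invoke the sign pattern of the weights $\omega_k^{(\alpha)}$ from Lemma~\ref{lemma2.2} (namely $\omega_0^{(\alpha)},\omega_1^{(\alpha)}>0$, all $\omega_k^{(\alpha)}<0$ for $k\ge 2$, monotone increasing to $0$, and $\sum_{k=0}^{N}\omega_k^{(\alpha)}>0$) together with a generating-function / summation-by-parts argument.

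The cleanest route is the generating-function identity: the $\omega_k^{(\gamma)}$ are the coefficients of $W_\gamma(z) = \bigl(\tfrac{\gamma}{2} + \tfrac{2-\gamma}{2}z\bigr)(1-z)^{\gamma}$, so that formally $\sum_k \omega_k^{(\gamma)} z^k = W_\gamma(z)$. Evaluating at $z = e^{\mathrm{i}\phi}$ gives a symbol whose real part can be shown to be positive for $0<\gamma<1$ (and negative for $1<\gamma<2$) for all $\phi \in (0, 2\pi)$; this is precisely the coercivity estimate that underlies the spatial discretization and is available in the WSGD literature (Deng's paper, and the analyses cited via \cite{feng2016high,zhao2016preconditioned}). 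For the Strang preconditioner one must be careful: $s(A_\gamma)$ is not the full Toeplitz symbol but a truncation keeping only $\omega_0^{(\gamma)},\dots,\omega_{\lfloor N/2\rfloor}^{(\gamma)}$ wrapped around. I would therefore split $\mathrm{Re}\,\lambda_m$ into the "head" $\omega_1^{(\gamma)} + \omega_0^{(\gamma)}\cos\theta_m$ plus the tail $\sum_{k=2}^{\lfloor N/2\rfloor}\omega_k^{(\gamma)}\cos((k-1)\theta_m)$, bound the tail in absolute value by $\sum_{k=2}^{\infty}|\omega_k^{(\gamma)}| = \omega_0^{(\gamma)}+\omega_1^{(\gamma)}$ (using $\sum_{k=0}^\infty \omega_k^{(\gamma)}=0$), and check that the head never drops below this bound except possibly at isolated $\theta_m$ where a sharper, non-absolute estimate of the tail is needed.

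I expect the main obstacle to be exactly that borderline case: a crude triangle-inequality bound on the wrapped tail is not quite enough to keep $\mathrm{Re}\,\lambda_m$ strictly positive uniformly in $N$, because near $\theta_m \approx \pi$ the head $\omega_1^{(\gamma)} - \omega_0^{(\gamma)}$ is small. The fix is to retain the oscillation: since the $\omega_k^{(\gamma)}$ for $k\ge 2$ are monotone (increasing to $0$ for $\alpha$, decreasing to $0$ for $\beta$) and of one sign, an Abel summation on $\sum \omega_k^{(\gamma)}\cos((k-1)\theta_m)$ together with the bounded-partial-sums property of $\sum\cos$ yields a tail bound proportional to $|\omega_2^{(\gamma)}|/|\sin(\theta_m/2)|$ away from $\theta_m=0$, and a direct telescoping estimate handles $\theta_m$ near $0$ where all cosines are close to $1$ and the true sum $\sum_{k=0}^{\lfloor N/2\rfloor}\omega_k^{(\gamma)}$ is positive (resp. negative) by Lemmas~\ref{lemma2.2}--\ref{lemma2.3}. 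Combining these three regimes — $\theta_m$ near $0$, $\theta_m$ near $\pi$, and the bulk — gives strict positivity (resp. negativity) of $\mathrm{Re}\,\lambda_m$ for all $m$ and all $N$, which is the assertion. The statement for $s(A_\gamma^T)$ is immediate since $s(A_\gamma^T) = s(A_\gamma)^T$ has the conjugate eigenvalues, hence the same real parts.
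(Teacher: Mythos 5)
Your Fourier-symbol route is legitimate in principle (a circulant is normal, so its eigenvalue real parts are exactly the values $\sum_k \omega_k^{(\gamma)}\cos((k-1)\theta_m)$ you write down), but the plan as described has a genuine gap precisely in the regime you flag as delicate, $\theta_m$ near $\pi$. There your "head'' is $\omega_1^{(\alpha)}+\omega_0^{(\alpha)}\cos\theta_m\approx\omega_1^{(\alpha)}-\omega_0^{(\alpha)}=\tfrac{1}{2}\bigl(3-(1+\alpha)^2\bigr)$, which is not merely small but \emph{strictly negative} for $\alpha>\sqrt{3}-1$. Any estimate that controls the tail only in absolute value — whether the crude bound $\sum_{k\ge2}|\omega_k^{(\alpha)}|$ or the Abel-summation bound $C|\omega_2^{(\alpha)}|/|\sin(\theta_m/2)|$ — can then only make the lower bound worse, so the three-regime argument cannot close: near $\pi$ the tail is in fact positive and must be shown to \emph{overcompensate} the negative head, which requires a signed estimate. (For $\beta$ there is the additional wrinkle that $\omega_2^{(\beta)}$ is not covered by the monotonicity chain of Lemma \ref{lemma2.3} and changes sign around $\beta\approx1.56$, so "monotone and of one sign for $k\ge2$'' fails there too.)

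The missing ingredient is the pairing $\omega_0^{(\gamma)}+\omega_2^{(\gamma)}$: in the Strang circulant the wrapped entry $\omega_0^{(\gamma)}$ and the entry $\omega_2^{(\gamma)}$ sit at conjugate positions, so in the real part they contribute jointly as $(\omega_0^{(\gamma)}+\omega_2^{(\gamma)})\cos\theta_m$, and Lemmas \ref{lemma2.2}--\ref{lemma2.3} guarantee this combination has the \emph{same} sign as all the remaining coefficients $\omega_k^{(\gamma)}$, $k\ge3$. Once every coefficient multiplying a cosine has one sign, replacing each cosine by its extremal value $1$ gives immediately $\mathrm{Re}\,\lambda_m\ge\sum_{k=0}^{\lfloor N/2\rfloor}\omega_k^{(\alpha)}>0$ (resp.\ $\le\sum_{k=0}^{\lfloor N/2\rfloor}\omega_k^{(\beta)}<0$), uniformly in $m$ and $N$, with no case analysis. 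This is exactly the content of the paper's proof, phrased there as the Gershgorin circle theorem applied to $\tfrac{1}{2}\bigl(s(A_\gamma)+s^*(A_\gamma)\bigr)$: the disc is centred at $\omega_1^{(\gamma)}$ with radius $\mp\sum_{k=0,k\ne1}^{\lfloor N/2\rfloor}\omega_k^{(\gamma)}<\pm\omega_1^{(\gamma)}$, the off-diagonal absolute row sum collapsing to that expression only because $\omega_0^{(\gamma)}+\omega_2^{(\gamma)}$ and the $\omega_k^{(\gamma)}$, $k\ge3$, share a sign. So the fix to your argument is not finer harmonic analysis but this one algebraic observation; with it, your symbol computation and the paper's Gershgorin argument become the same half-line proof.
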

\begin{proof}
Firstly, we proof the real parts of
all eigenvalues of $s(A_{\alpha})$ are strictly positive for all
$N$.

Recall that the real parts of all eigenvalues of
$s(A_{\alpha})$ are equivalent to the eigenvalues of
$\frac{s(A_{\alpha}) + [s(A_{\alpha})]^{*}}{2}$
(where $S^{*}$ represents conjugate transpose of matrix $S$).
Therefore, it is sufficient to show that all eigenvalues of
$\frac{s(A_{\alpha}) + s^{*}(A_{\alpha})}{2}$ are
strictly positive for all $N$.

According to the Gershgorin circle theorem,
all the Gershgorin disc of the circulant matrix
$\frac{s(A_{\alpha}) + s^{*}(A_{\alpha})}{2}$
are centered at $\omega_1^{(\alpha)} > 0$ with radius
\begin{equation*}
r_{\alpha} = -\sum\limits_{k = 0,k \neq 1}^{\left \lfloor \frac{N} {2}\right \rfloor} \omega_{k}^{(\alpha)}
< -\sum\limits_{k = 0,k \neq 1}^{\infty} \omega_{k}^{(\alpha)}
= \omega_1^{(\alpha)},
\end{equation*}
by the Lemma \ref{lemma2.2}, the first part of this
theorem is proved.

Then, we prove the second part of the theorem.
Similarly, all the Gershgorin disc of the circulant matrix
$\frac{s(A_{\beta}) + s^{*}(A_{\beta})}{2}$
are centered at $\omega_1^{(\beta)} < 0$ with radius
\begin{equation*}
r_{\beta} = \sum\limits_{k = 0,k \neq 1}^{\left \lfloor \frac{N} {2}\right \rfloor} \omega_{k}^{(\beta)}
< \sum\limits_{k = 0,k \neq 1}^{\infty} \omega_{k}^{(\beta)}
= -\omega_1^{(\beta)},
\end{equation*}
by the Lemma \ref{lemma2.3}, the second part of this
theorem is proved. Finally, we complete the proof of the
theorem.
\end{proof}
With the help of Theorem \ref{th3.1}, we will check the circulant preconditioners $P^{(j + \sigma)}$ are
nonsingular.
\begin{theorem}
The circulant preconditioners $P^{(j + \sigma)}$ defined in (\ref{eq3.2}) are nonsingular.
\end{theorem}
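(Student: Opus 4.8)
The plan is to show that each $P^{(j+\sigma)}$ has no zero eigenvalue by arguing, as in Theorem \ref{th3.1}, that its eigenvalues all lie strictly in the open right (or left) half-plane and hence cannot be $0$. Since $P^{(j+\sigma)}$ is a sum of circulant matrices, it is itself circulant, and its eigenvalues can be read off coordinatewise via the discrete Fourier transform; equivalently, one studies the Hermitian part and invokes the Gershgorin circle theorem exactly as in the previous proof.

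First I would observe that $P^{(j+\sigma)}$ is a real constant times the identity plus $\sigma$ times the circulant combination
\begin{equation*}
\frac{d_{+}^{j + \sigma}}{h^{\alpha}} s(A_{\alpha})
+ \frac{d_{-}^{j + \sigma}}{h^{\alpha}} s(A_{\alpha}^{T})
- \frac{e_{+}^{j + \sigma}}{h^{\beta}} s(A_{\beta})
- \frac{e_{-}^{j + \sigma}}{h^{\beta}} s(A_{\beta}^{T}).
\end{equation*}
The identity part contributes a strictly positive real number to every eigenvalue: for $j=0$ it is $\frac{\tau^{-\theta}}{\Gamma(2-\theta)}a_{0}^{(\theta,\sigma)}$ with $a_{0}^{(\theta,\sigma)}=\sigma^{1-\theta}>0$, and for $j\ge 1$ it is $\frac{\tau^{-\theta}}{\Gamma(2-\theta)}\big(a_{0}^{(\theta,\sigma)}+b_{1}^{(\theta,\sigma)}\big)$, which one checks to be positive from the explicit formulas for $a_0^{(\theta,\sigma)}$ and $b_1^{(\theta,\sigma)}$ in Lemma \ref{lemma2.1}. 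Then I would take real parts: the real part of any eigenvalue of $P^{(j+\sigma)}$ equals that strictly positive constant plus $\sigma$ times the corresponding eigenvalue of the Hermitian part of the circulant combination above. By Theorem \ref{th3.1}, $s(A_{\alpha})$ and $s(A_{\alpha}^{T})$ have eigenvalues with strictly positive real part, while $s(A_{\beta})$ and $s(A_{\beta}^{T})$ have eigenvalues with strictly negative real part; since $d_{\pm}^{j+\sigma}\ge 0$, $e_{\pm}^{j+\sigma}\ge 0$, $h>0$, and $\sigma>0$, every term $\frac{d_\pm}{h^\alpha}\mathrm{Re}(\lambda(s(A_\alpha)))$ and $-\frac{e_\pm}{h^\beta}\mathrm{Re}(\lambda(s(A_\beta)))$ is nonnegative. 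Hence the real part of each eigenvalue of $P^{(j+\sigma)}$ is bounded below by the strictly positive identity contribution, so no eigenvalue is zero and $P^{(j+\sigma)}$ is nonsingular.

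A point worth stating carefully is that the eigenvalues of different circulant matrices of the same size are all indexed by the same Fourier modes, so one really is adding eigenvalues mode by mode rather than facing a genuine non-commuting sum; this is what makes the real-part argument clean. The main obstacle — really the only nontrivial check — is verifying $a_{0}^{(\theta,\sigma)}+b_{1}^{(\theta,\sigma)}>0$ for $j\ge1$, i.e. that
\begin{equation*}
\sigma^{1-\theta}+\frac{1}{2-\theta}\big[(1+\sigma)^{2-\theta}-\sigma^{2-\theta}\big]-\frac{1}{2}\big[(1+\sigma)^{1-\theta}-\sigma^{1-\theta}\big]>0,
\end{equation*}
which follows from $\theta\in(0,1)$, $\sigma=1-\theta/2\in(1/2,1)$ and elementary monotonicity/convexity estimates on $x^{2-\theta}$ and $x^{1-\theta}$ (indeed this positivity is exactly the content of the coefficient-sign properties used in \cite{Alikhanov2015424,dehghan2015two}, so it may simply be cited). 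Everything else is a direct combination of Theorem \ref{th3.1}, Lemmas \ref{lemma2.2}–\ref{lemma2.3}, and the non-negativity of the diffusion coefficients.
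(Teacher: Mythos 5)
Your proof is correct and follows essentially the same route as the paper: both exploit the simultaneous diagonalization of all the circulant blocks by the Fourier matrix, take real parts of the eigenvalues mode by mode, and invoke Theorem \ref{th3.1} together with the non-negativity of $d_{\pm}$, $e_{\pm}$ to conclude that every eigenvalue of $P^{(j+\sigma)}$ has strictly positive real part. You are in fact slightly more careful than the paper, which silently assumes the positivity of the diagonal contribution $a_{0}^{(\theta,\sigma)}+b_{1}^{(\theta,\sigma)}$; your explicit verification (or citation to \cite{Alikhanov2015424}) of that fact closes a small gap rather than opening one.
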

\begin{proof}
As we know, a circulant matrix can be diagonalized by the Fourier matrix F \cite{chan2007toep}. Then it follows that
$s(A_{\gamma}) = F^{*} \Lambda_{\gamma} F$, $s(A_{\gamma}^{T}) = F^{*} \bar{\Lambda}_{\gamma} F$,
where $\bar{\Lambda}_{\gamma}$ is the complex conjugate of $\Lambda_{\gamma}$. Decompose the circulant matrix
$P^{(j + \sigma)} = F^{*} \Lambda_{P} F$ with the diagonal matrix
\begin{equation*}
\Lambda_{P} =
\begin{cases}
\begin{split}
\frac{\tau^{-\theta}}{\Gamma(2 - \theta)} & a_{0}^{(\theta,\sigma)} I
+ \sigma \Big( \frac{d_{+}^{j + \sigma}}{h^{\alpha}} \Lambda_{\alpha}
+ \frac{d_{-}^{j + \sigma}}{h^{\alpha}} \bar{\Lambda}_{\alpha} \\
& - \frac{e_{+}^{j + \sigma}}{h^{\beta}} \Lambda_{\beta}
- \frac{e_{-}^{j + \sigma}}{h^{\beta}} \bar{\Lambda}_{\beta} \Big), \quad j = 0,
\end{split}\\
\begin{split}
\frac{\tau^{-\theta}}{\Gamma(2 - \theta)} (& a_{0}^{(\theta,\sigma)}
+ b_{1}^{(\theta,\sigma)}) I
+ \sigma \Big( \frac{d_{+}^{j + \sigma}}{h^{\alpha}} \Lambda_{\alpha}
+ \frac{d_{-}^{j + \sigma}}{h^{\alpha}} \bar{\Lambda}_{\alpha} \\
& - \frac{e_{+}^{j + \sigma}}{h^{\beta}} \Lambda_{\beta}
- \frac{e_{-}^{j + \sigma}}{h^{\beta}} \bar{\Lambda}_{\beta} \Big), \quad j = 1,\cdots,M-1.
\end{split}
\end{cases}
\end{equation*}

Then the real part of $\Lambda_P$ can be written as
\begin{equation*}
Re([\Lambda_{P}]_{k,k}) =
\begin{cases}
\begin{split}
\frac{\tau^{-\theta}}{\Gamma(2 - \theta)} & a_{0}^{(\theta,\sigma)}
+ \sigma \Big( \frac{d_{+}^{j + \sigma}}{h^{\alpha}} Re([\Lambda_{\alpha}]_{k,k})
+ \frac{d_{-}^{j + \sigma}}{h^{\alpha}} Re([\bar{\Lambda}_{\alpha}]_{k,k}) \\
& - \frac{e_{+}^{j + \sigma}}{h^{\beta}} Re([\Lambda_{\beta}]_{k,k})
- \frac{e_{-}^{j + \sigma}}{h^{\beta}} Re([\bar{\Lambda}_{\beta}]_{k,k}) \Big), \quad j = 0,
\end{split}\\
\begin{split}
\frac{\tau^{-\theta}}{\Gamma(2 - \theta)} (& a_{0}^{(\theta,\sigma)}
+ b_{1}^{(\theta,\sigma)})
+ \sigma \Big( \frac{d_{+}^{j + \sigma}}{h^{\alpha}} Re([\Lambda_{\alpha}]_{k,k})
+ \frac{d_{-}^{j + \sigma}}{h^{\alpha}} Re([\bar{\Lambda}_{\alpha}]_{k,k}) \\
& - \frac{e_{+}^{j + \sigma}}{h^{\beta}} Re([\Lambda_{\beta}]_{k,k})
- \frac{e_{-}^{j + \sigma}}{h^{\beta}} Re([\bar{\Lambda}_{\beta}]_{k,k}) \Big), \quad j = 1,\cdots,M-1.
\end{split}
\end{cases}
\end{equation*}
Combining Theorem \ref{th3.1}, we obtain
\begin{equation*}
Re([\Lambda_P]_{k,k}) > 0.
\end{equation*}

Consequently, $P^{(j + \sigma)}$ are invertible.
\end{proof}

Unfortunately, due to the properties of coefficients $\omega_{k}^{(\alpha)}$ and $\omega_{k}^{(\beta)}$,
it is difficult to theoretically investigate the eigenvalue distributions of
preconditioned matrix $(P^{(j + \sigma)})^{-1} \mathcal{M}^{j + \sigma}$,
but we still can give some figures to illustrate the
clustering eigenvalue distributions of several specified preconditioned matrices in section 4.

At the end of this section, we study the fast implementation of IDS (\ref{eq2.10}).
For convenience, (\ref{eq2.10}) is recast as
\begin{equation}
\mathcal{M}^{j + \sigma} u^{j + 1}
 = B^{j + \sigma} u^{j}
 - \frac{\tau^{-\theta}}{\Gamma(2 - \theta)} \sum_{s = 0}^{j - 1} c_{j - s}^{(\theta,\sigma)}
( u^{s + 1} - u^{s}) + F^{j + 1}, \quad 0 \leq j \leq M - 1,
\label{eq3.3}
\end{equation}
in which
\begin{equation*}
F^{j + 1} = \left[ g(\sigma u_{1}^{j + 1} + (1 - \sigma) u_{1}^{j}, x_1, t_{j + \sigma}),
\cdots, g(\sigma u_{N - 1}^{j + 1} + (1 - \sigma) u_{N - 1}^{j}, x_{N - 1}, t_{j + \sigma}) \right]^{T}.
\end{equation*}
Then the following algorithm is utilized to solve (\ref{eq3.3}),
\begin{equation}
\begin{split}
\mathcal{M}^{j + \sigma} u^{j + 1 (l + 1)}
 = B^{j + \sigma} u^{j}
 - \frac{\tau^{-\theta}}{\Gamma(2 - \theta)} \sum_{s = 0}^{j - 1} c_{j - s}^{(\theta,\sigma)}
& ( u^{s + 1} - u^{s}) + F^{j + 1 (l)}, \\
& 0 \leq j \leq M - 1, l = 0,1,2,\cdots
\end{split}
\label{eq3.4}
\end{equation}
with
\begin{equation}
u_{0}^{j (l)} = u_{N}^{j (l)} = 0, \quad
u^{j + 1 (0)} =
\begin{cases}
u^{j}, & j = 0, \\
2 u^{j} - u^{j - 1}, & j \geq 1
\end{cases}
\label{eq3.5}
\end{equation}
and
\begin{equation}
\begin{split}
F^{j + 1 (l)} = & \Big[ g(\sigma u_{1}^{j + 1(l)} + (1 - \sigma) u_{1}^{j}, x_1, t_{j + \sigma}), \\
&\quad \cdots, g(\sigma u_{N - 1}^{j + 1(l)} + (1 - \sigma) u_{N - 1}^{j}, x_{N - 1}, t_{j + \sigma}) \Big]^{T}.
\end{split}
\label{eq3.6}
\end{equation}
At each iteration, the above system is linearized,
and compared with (\ref{eq3.1}), the system has not changed significantly.
Thus, the previously mentioned fast method can be applied directly to this system.
In the next section, we provide two experiments to test the effectiveness of our fast method, even for nonlinear situation.
\section{Numerical results}
\label{sec5} \quad\

In this section, we carry out several numerical experiments,
which are given in Examples 1-2, to illustrate that
our proposed IDM can indeed convergent with second order accuracy in both space
and time. Some other numerical experiments, given in Examples 3-4, are
reported to illustrate the effectiveness of the fast solution techniques.
For direct solver, we choose LU factorization of MATLAB in Examples 3-4,
and its CPU time is represented.
For two Krylov subspace methods (BiCGSTAB and GPBiCOR($m$, $\ell$)) with circulant preconditioners,
number of iterations required for convergence and CPU time of those methods are reported.
We denote PBiCGSTAB and PGPBiCOR($m$, $\ell$) as the preconditioned version of
BiCGSTAB and GPBiCOR($m$, $\ell$) respectively. The stopping criterion of those methods is
\begin{equation*}
\frac{\left \| r^{(k)} \right \|_2}{\left \| r^{(0)} \right \|_2} < 10^{-12},
\end{equation*}
where $r^{(k)}$ is the residual vector of the linear system after $k$ iterations,
and the initial guess at each time step is chosen as the zero vector.
All experiments were performed on a Windows 7 (32 bit) PC-Intel(R)
Core(TM) i3-2130 CPU 3.40GHz, 4GB of RAM using MATLAB R2015b.
\vspace{4mm}

\subsection{Verification of convergence rate}
\label{sec4.1} \quad\

\noindent{\textbf{Example 1.}} We consider the linear problem  (\ref{eq1.1}), take coefficients
\begin{equation*}
\begin{split}
& d_{+}(t) = \exp(t), \qquad d_{-}(t) = 3\exp(-t), \\
& e_{+}(t) = (1 + t)^2, \qquad e_{-}(t) = 1 + t^2,
\end{split}
\end{equation*}
and the source term
\begin{equation*}
\begin{split}
f(x,t) = & \frac{\Gamma(3 + \theta)}{\Gamma(3)} t^2 x^2 (1 - x)^2
+ t^{\theta + 2} \Big\{ \frac{\Gamma(3)}{\Gamma(3 - \alpha)} \left[ d_{+}(t) x^{2 - \alpha} + d_{-}(t) (1 - x)^{2 - \alpha} \right] \\
&~ - \frac{2 \Gamma(4)}{\Gamma(4 - \alpha)} \left[ d_{+}(t) x^{3 - \alpha} + d_{-}(t) (1 - x)^{3 - \alpha} \right]
+ \frac{\Gamma(5)}{\Gamma(5 - \alpha)} \left[ d_{+}(t) x^{4 - \alpha} + d_{-}(t) (1 - x)^{4 - \alpha} \right] \\
&~ - \frac{\Gamma(3)}{\Gamma(3 - \beta)} \left[ e_{+}(t) x^{2 - \beta} + e_{-}(t) (1 - x)^{2 - \beta} \right]
+ \frac{2 \Gamma(4)}{\Gamma(4 - \beta)} \left[ e_{+}(t) x^{3 - \beta} + e_{-}(t) (1 - x)^{3 - \beta} \right] \\
&\quad - \frac{\Gamma(5)}{\Gamma(5 - \beta)} \left[ e_{+}(t) x^{4 - \beta} + e_{-}(t) (1 - x)^{4 - \beta} \right] \Big\}.
\end{split}
\end{equation*}
Then the causal solution is $u(x,t) = t^{\theta  + 2} x^2 (1 - x)^2$.

As can be seen in Tables \ref{tab1} and \ref{tab3}, when $h = 1/3000$,
the maximum error decreases steadily with the shortening of time step,
and the convergence order of time is the expected $\mathcal{O} (\tau^2)$,
where the convergence order (CO) is given by the
following formula: $\rm{CO} = \log_{\tau_1/\tau_2} \frac{\left \| \xi_1 \right \|}{\left \| \xi_2 \right \|}$
($\xi_i$ is the error corresponding to $h_i$).
On the other hand,
Tables \ref{tab2} and \ref{tab4} illustrate that if $h = \tau$,
a reduction in the maximum error occurs along with the decrease of space step and time step,
and the spatial convergence order is $\mathcal{O} (h^2)$,
where the convergence order (CO) is given by the formula:
$\rm{CO} =\log_{h_1/h_2} \frac{\left \| \xi_1 \right \|}{\left \| \xi_2 \right \|}$.
Figs. \ref{fig:1}-\ref{fig:2} are plotted to further explain the
reliability of our proposed scheme, and indicate that the `quadratic-type' order of accuracy can achieve
the desired order $\mathcal{O}(\tau^2 + h^2)$.

\begin{table}[h]
\caption{$L_2$-norm and maximum norm error behavior versus
$\tau$-grid size reduction when $\alpha = 0.6$, $\beta = 1.8$, and $h = 1/3000$ in Example 1.}
\centering
\begin{tabular}{cccccc}
\hline
$\theta$ & $\tau$ & $ \left \| \xi \right \|_{\mathcal{C}(\bar{\omega}_{h \tau})}$ & CO in $ \left \| \cdot \right \|_{\mathcal{C}(\bar{\omega}_{h \tau})} $
& max$_{0 \leq n \leq M} \left \| \xi^n \right \|_0$ & CO in $\left \| \cdot \right \|_0$ \\
\hline
0.10 & 1/8 & 5.2321e-05 & -- & 3.3293e-05 & -- \\
    & 1/16 & 1.3096e-05 & 1.9982 & 8.3340e-06 & 1.9981 \\
    & 1/32 & 3.2603e-06 & 2.0061 & 2.0748e-06 & 2.0061 \\
    & 1/64 & 7.9868e-07 & 2.0293 & 5.0828e-07 & 2.0292 \\
    & 1/128 & 1.8626e-07 & 2.1003 & 1.1850e-07 & 2.1007 \\
\hline
0.50 & 1/8 & 3.3405e-04 & -- & 2.1261e-04 & -- \\
    & 1/16 & 8.3929e-05 & 1.9928 & 5.3418e-05 & 1.9928 \\
    & 1/32 & 2.1014e-05 & 1.9978 & 1.3374e-05 & 1.9979 \\
    & 1/64 & 5.2426e-06 & 2.0030 & 3.3366e-06 & 2.0030 \\
    & 1/128 & 1.2947e-06 & 2.0177 & 8.2404e-07 & 2.0176 \\
\hline
0.90 & 1/8 & 6.5352e-04 & -- & 4.1601e-04 & -- \\
    & 1/16 & 1.6365e-04 & 1.9976 & 1.0417e-04 & 1.9976 \\
    & 1/32 & 4.0921e-05 & 1.9997 & 2.6048e-05 & 1.9997 \\
    & 1/64 & 1.0216e-05 & 2.0020 & 6.5027e-06 & 2.0021 \\
    & 1/128 & 2.5376e-06 & 2.0093 & 1.6153e-06 & 2.0093 \\
\hline
0.99 & 1/8 & 7.1584e-04 & -- & 4.5567e-04 & -- \\
    & 1/16 & 1.7899e-04 & 1.9998 & 1.1395e-04 & 1.9996 \\
    & 1/32 & 4.4733e-05 & 2.0004 & 2.8479e-05 & 2.0005 \\
    & 1/64 & 1.1167e-05 & 2.0021 & 7.1095e-06 & 2.0021 \\
    & 1/128 & 2.7756e-06 & 2.0084 & 1.7671e-06 & 2.0084 \\
\hline
\end{tabular}
\label{tab1}
\end{table}
\begin{table}[h]
\caption{$L_2$-norm and maximum norm error behavior versus
grid size reduction when $\alpha = 0.6$, $\beta = 1.8$, and $\tau = h$ in Example 1.}
\centering
\begin{tabular}{cccccc}
\hline
$\theta$ & $\tau$ & $ \left \| \xi \right \|_{\mathcal{C}(\bar{\omega}_{h \tau})}$ & CO in $ \left \| \cdot \right \|_{\mathcal{C}(\bar{\omega}_{h \tau})} $
& max$_{0 \leq n \leq M} \left \| \xi^n \right \|_0$ & CO in $\left \| \cdot \right \|_0$ \\
\hline
0.10 & 1/20 & 6.0326e-04 & -- & 4.1160e-04 & -- \\
    & 1/40 & 1.4719e-04 & 2.0351 & 9.9593e-05 & 2.0471 \\
    & 1/80 & 3.5781e-05 & 2.0404 & 2.4068e-05 & 2.0489 \\
    & 1/160 & 8.6941e-06 & 2.0411 & 5.8187e-06 & 2.0483 \\
    & 1/320 & 2.1128e-06 & 2.0409 & 1.4087e-06 & 2.0464 \\
\hline
0.50 & 1/20 & 5.5002e-04 & -- & 3.7743e-04 & -- \\
    & 1/40 & 1.3407e-04 & 2.0365 & 9.1172e-05 & 2.0496 \\
    & 1/80 & 3.2522e-05 & 2.0435 & 2.1985e-05 & 2.0520 \\
    & 1/160 & 7.8829e-06 & 2.0446 & 5.3038e-06 & 2.0514 \\
    & 1/320 & 1.9110e-06 & 2.0444 & 1.2815e-06 & 2.0492 \\
\hline
0.90 & 1/20 & 4.8566e-04 & -- & 3.3574e-04 & -- \\
    & 1/40 & 1.1823e-04 & 2.0384 & 8.0954e-05 & 2.0522 \\
    & 1/80 & 2.8599e-05 & 2.0475 & 1.9472e-05 & 2.0557 \\
    & 1/160 & 6.9105e-06 & 2.0491 & 4.6857e-06 & 2.0551 \\
    & 1/320 & 1.6699e-06 & 2.0490 & 1.1296e-06 & 2.0524 \\
\hline
0.99 & 1/20 & 4.7183e-04 & -- & 3.2657e-04 & -- \\
    & 1/40 & 1.1479e-04 & 2.0392 & 7.8721e-05 & 2.0526 \\
    & 1/80 & 2.7752e-05 & 2.0484 & 1.8925e-05 & 2.0564 \\
    & 1/160 & 6.7015e-06 & 2.0500 & 4.5520e-06 & 2.0558 \\
    & 1/320 & 1.6181e-06 & 2.0502 & 1.0969e-06 & 2.0530 \\
\hline
\end{tabular}
\label{tab2}
\end{table}
\begin{figure}[!htbp]
\centering
\includegraphics[width=3.12in,height=3.0in]{table1.eps}
\includegraphics[width=3.12in,height=3.0in]{table2.eps}
\caption{Comparison the order of accuracy obtained by our proposed IDS for Example 1 in
space and time variables. Left: time direction; Right: space direction.}
\label{fig:1}
\end{figure}
\begin{table}[h]
\caption{$L_2$-norm and maximum norm error behavior versus
$\tau$-grid size reduction when $\alpha = 0.99$, $\beta = 1.99$, and $h = 1/3000$ in Example 1.}
\centering
\begin{tabular}{cccccc}
\hline
$\theta$ & $\tau$ & $ \left \| \xi \right \|_{\mathcal{C}(\bar{\omega}_{h \tau})}$ & CO in $ \left \| \cdot \right \|_{\mathcal{C}(\bar{\omega}_{h \tau})} $
& max$_{0 \leq n \leq M} \left \| \xi^n \right \|_0$ & CO in $\left \| \cdot \right \|_0$ \\
\hline
0.10 & 1/8 & 5.2548e-05 & -- & 3.3453e-05 & -- \\
    & 1/16 & 1.3149e-05 & 1.9987 & 8.3697e-06 & 1.9989 \\
    & 1/32 & 3.2701e-06 & 2.0076 & 2.0799e-06 & 2.0086 \\
    & 1/64 & 7.9770e-07 & 2.0354 & 5.0584e-07 & 2.0398 \\
    & 1/128 & 1.8525e-07 & 2.1064 & 1.1708e-07 & 2.1112 \\
\hline
0.50 & 1/8 & 3.3536e-04 & -- & 2.1354e-04 & -- \\
    & 1/16 & 8.4257e-05 & 1.9929 & 5.3650e-05 & 1.9929 \\
    & 1/32 & 2.1094e-05 & 1.9980 & 1.3430e-05 & 1.9982 \\
    & 1/64 & 5.2595e-06 & 2.0038 & 3.3469e-06 & 2.0045 \\
    & 1/128 & 1.2956e-06 & 2.0213 & 8.2291e-07 & 2.0240 \\
\hline
0.90 & 1/8 & 6.5583e-04 & -- & 4.1765e-04 & -- \\
    & 1/16 & 1.6423e-04 & 1.9976 & 1.0458e-04 & 1.9976 \\
    & 1/32 & 4.1065e-05 & 1.9997 & 2.6150e-05 & 1.9998 \\
    & 1/64 & 1.0249e-05 & 2.0024 & 6.5248e-06 & 2.0028 \\
    & 1/128 & 2.5427e-06 & 2.0111 & 1.6172e-06 & 2.0124 \\
\hline
0.99 & 1/8 & 7.1823e-04 & -- & 4.5737e-04 & -- \\
    & 1/16 & 1.7958e-04 & 1.9998 & 1.1437e-04 & 1.9997 \\
    & 1/32 & 4.4877e-05 & 2.0005 & 2.8580e-05 & 2.0006 \\
    & 1/64 & 1.1200e-05 & 2.0025 & 7.1310e-06 & 2.0028 \\
    & 1/128 & 2.7804e-06 & 2.0101 & 1.7688e-06 & 2.0113 \\
\hline
\end{tabular}
\label{tab3}
\end{table}
\begin{table}[h]
\caption{$L_2$-norm and maximum norm error behavior versus
grid size reduction when $\alpha = 0.99$, $\beta = 1.99$, and $\tau = h$ in Example 1.}
\centering
\begin{tabular}{cccccc}
\hline
$\theta$ & $\tau$ & $ \left \| \xi \right \|_{\mathcal{C}(\bar{\omega}_{h \tau})}$
& CO in $ \left \| \cdot \right \|_{\mathcal{C}(\bar{\omega}_{h \tau})} $
& max$_{0 \leq n \leq M} \left \| \xi^n \right \|_0$ & CO in $\left \| \cdot \right \|_0$ \\
\hline
0.10 & 1/20 & 6.0735e-04 & -- & 4.4270e-04 & -- \\
    & 1/40 & 1.5155e-04 & 2.0027 & 1.1041e-04 & 2.0035 \\
    & 1/80 & 3.7819e-05 & 2.0026 & 2.7528e-05 & 2.0039 \\
    & 1/160 & 9.4336e-06 & 2.0032 & 6.8625e-06 & 2.0041 \\
    & 1/320 & 2.3531e-06 & 2.0033 & 1.7107e-06 & 2.0041 \\
\hline
0.50 & 1/20 & 5.5527e-04 & -- & 4.0960e-04 & -- \\
    & 1/40 & 1.3860e-04 & 2.0022 & 1.0219e-04 & 2.0030 \\
    & 1/80 & 3.4589e-05 & 2.0026 & 2.5475e-05 & 2.0041 \\
    & 1/160 & 8.6263e-06 & 2.0035 & 6.3494e-06 & 2.0044 \\
    & 1/320 & 2.1512e-06 & 2.0036 & 1.5825e-06 & 2.0045 \\
\hline
0.90 & 1/20 & 4.9287e-04 & -- & 3.6976e-04 & -- \\
    & 1/40 & 1.2317e-04 & 2.0006 & 9.2308e-05 & 2.0021 \\
    & 1/80 & 3.0734e-05 & 2.0027 & 2.3012e-05 & 2.0041 \\
    & 1/160 & 7.6637e-06 & 2.0037 & 5.7346e-06 & 2.0046 \\
    & 1/320 & 1.9108e-06 & 2.0039 & 1.4289e-06 & 2.0048 \\
\hline
0.99 & 1/20 & 4.7956e-04 & -- & 3.6117e-04 & -- \\
    & 1/40 & 1.1989e-04 & 2.0000 & 9.0184e-05 & 2.0017 \\
    & 1/80 & 2.9914e-05 & 2.0028 & 2.2483e-05 & 2.0041 \\
    & 1/160 & 7.4589e-06 & 2.0038 & 5.6023e-06 & 2.0047 \\
    & 1/320 & 1.8596e-06 & 2.0040 & 1.3958e-06 & 2.0049 \\
\hline
\end{tabular}
\label{tab4}
\end{table}
\begin{figure}[H]
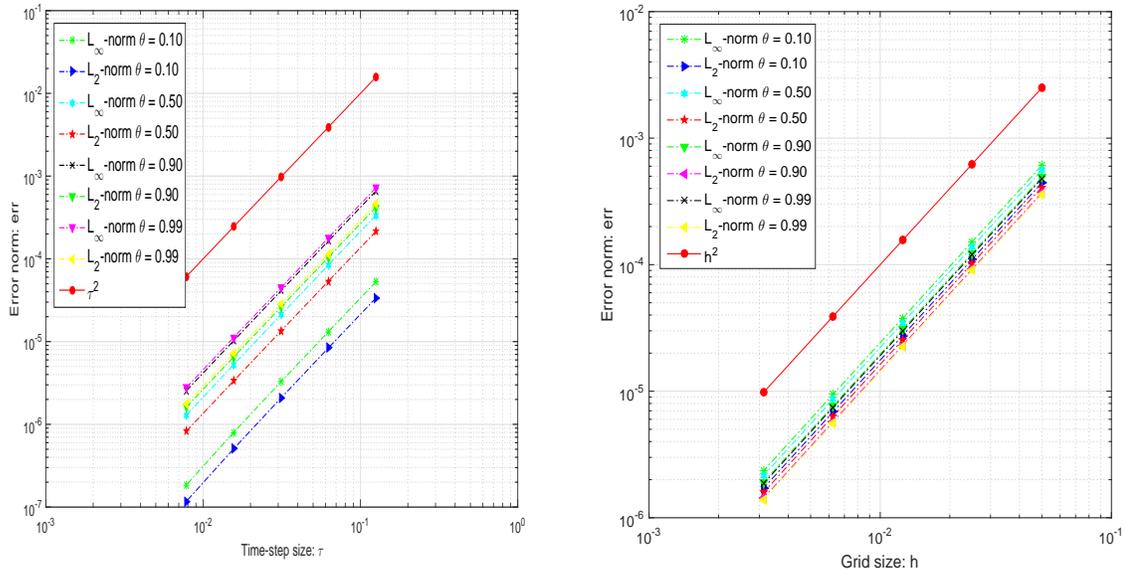

\centering
\includegraphics[width=3.12in,height=3.0in]{table3.eps}
\includegraphics[width=3.12in,height=3.0in]{table4.eps}
\caption{Comparison the order of accuracy obtained by our proposed IDS for Example 1 in
space and time variables. Left: time direction; Right: space direction.}
\label{fig:2}
\end{figure}

\noindent{\textbf{Example 2.}} In this example, we consider the nonlinear equation
on space interval $[0, L] = [0, 1]$ and time interval $[0, T] = [0, 1]$ with
advection coefficients $d_+(t) = sech(t)$, $d_-(t) = 4 sech(-t)$,
diffusion coefficients $e_+(t) = (2 + \cos(t))^2$, $e_-(t) = 2 + \cos^2(t)$,
and nonlinear term
\begin{equation*}
\begin{split}
g(u(x ,t), x, t) = & \frac{\sin(u(x, t))}{4} + \left( \frac{\Gamma(3 + \theta)}{\Gamma(3)} t^2 +
2 t^{1 - \theta} E_{1, 2 - \theta}(2t) \right) x^2 (1 - x)^2 \\
& + \left( t^{2 + \theta} + \exp(2t) \right)
\Big\{ \frac{\Gamma(3)}{\Gamma(3 - \alpha)} [d_+ x^{2 - \alpha} + d_- (1 - x)^{2 - \alpha}] \\
& - \frac{2 \Gamma(4)}{\Gamma(4 - \alpha)} [d_+ x^{3 - \alpha} + d_- (1 - x)^{3 - \alpha}]
+ \frac{\Gamma(5)}{\Gamma(5 - \alpha)} [d_+ x^{4 - \alpha} + d_- (1 - x)^{4 - \alpha}] \Big\} \\
& - t^{2 + \theta} \Big\{ \frac{\Gamma(3)}{\Gamma(3 - \beta)} [e_+ x^{2 - \beta} + e_- (1 - x)^{2 - \beta}]
- \frac{2 \Gamma(4)}{\Gamma(4 - \beta)} [e_+ x^{3 - \beta} + e_- (1 - x)^{3 - \beta}] \\
& + \frac{\Gamma(5)}{\Gamma(5 - \beta)} [e_+ x^{4 - \beta} + e_- (1 - x)^{4 - \beta}] \Big\}
 - \frac{\sin\left( (t^{2 + \theta} + \exp(2t)) x^2 (1 - x)^2 \right)}{4},
\end{split}
\end{equation*}
in which $E_{\mu, \nu}(z)$ is the Mittag-Leffler function with two parameters defined by
\begin{equation*}
E_{\mu, \nu}(z) = \sum_{k = 0}^{\infty} \frac{z^{k}}{\Gamma(\mu k + \nu)}.
\end{equation*}

The exact solution is $u(x, t) = \left( t^{2 + \theta} + \exp(2t) \right) x^2 (1 - x)^2 $.
For the finite difference discretization, the space step and time step are taken
to be $h = 1/N$ and $\tau = h$, respectively.
The errors ($\xi = U - u$) and convergence order (CO)
in the norms $\left \| \cdot \right \|_0$ and
$\left \| \cdot \right \|_{\mathcal{C}(\bar{\omega}_{h \tau})}$,
where $\left \| U \right \|_{\mathcal{C}(\bar{\omega}_{h \tau})}
=$ max$_{(x_i,t_j) \in \bar{\omega}_{h \tau}} |U|$, are given in Example 1.
\vspace{4mm}

As can be seen in Tables \ref{tab5}-\ref{tab8}, the numerical solution provided by the difference approximation
is in good agreement with our theoretical analysis. In Tables \ref{tab5} and \ref{tab7}, we take $h = 1/800$,
the errors in maximum norm and $L_2$-norm decrease steadily with the shortening of time step,
and the convergence order of time is the expected $\mathcal{O}(\tau^2)$. While in Tables \ref{tab6} and \ref{tab8},
the mesh size $\tau = h$ is chosen and the spatial convergence rates of the scheme (\ref{eq2.10}) are also nearly two,
for $\theta = 0.1, 0.5, 0.9, 0.99$.
Furthermore, Figs. \ref{fig:3} and \ref{fig:4} are plotted to further illustrate the reliability of our proposed
scheme, i.e., the slopes of the error curves in this log-log plot are 2, for $\theta = 0.1, 0.5, 0.9, 0.99$.

\begin{table}[h]
\caption{$L_2$-norm and maximum norm error behavior versus
$\tau$-grid size reduction when $\alpha = 0.6$, $\beta = 1.8$ and $h = 1/800$ in Example 2.}
\centering
\begin{tabular}{cccccc}
\hline
$\theta$ & $\tau$ & $ \left \| \xi \right \|_{\mathcal{C}(\bar{\omega}_{h \tau})}$
& CO in $ \left \| \cdot \right \|_{\mathcal{C}(\bar{\omega}_{h \tau})} $
& max$_{0 \leq n \leq M} \left \| \xi^n \right \|_0$ & CO in $\left \| \cdot \right \|_0$ \\
\hline
0.10	&1/8	&6.7718e-04&--&	4.3108e-04	&-- \\
	&1/16	&1.7300e-04	&1.9688 &	1.1009e-04&	1.9693 \\
	&1/32&	4.1798e-05&	2.0493 &	2.6564e-05	&2.0511 \\
	&1/64&	8.3378e-06&	2.3257 &	5.2789e-06&	2.3312 \\
	&1/128	&1.6425e-06	&2.3438 	&4.9654e-07	&3.4103 \\
\hline
0.50&	1/8&	2.8747e-03&--&	1.8297e-03&	-- \\
	&1/16	&7.3181e-04&	1.9739 	&4.6572e-04	&1.9741 \\
	&1/32&	1.8254e-04	&2.0033 	&1.1613e-04	&2.0037 \\
	&1/64	&4.3648e-05	&2.0642 &	2.7733e-05	&2.0661 \\
	&1/128	&8.7404e-06	&2.3201 &	5.5326e-06&	2.3256 \\
\hline
0.90&	1/8&	4.0951e-03&	--&	2.6098e-03	&-- \\
	&1/16&	1.0327e-03&	1.9875 &	6.5724e-04	&1.9894 \\
	&1/32&	2.5667e-04&	2.0084 	&1.6330e-04&	2.0089 \\
	&1/64	&6.2038e-05	&2.0487 	&3.9436e-05&	2.0499 \\
	&1/128&	1.3330e-05&	2.2185 &	8.4480e-06&	2.2228 \\

\hline
0.99	&1/8&	4.1885e-03	&--&	2.6801e-03&	-- \\
&	1/16&	1.0594e-03&	1.9832 &	6.7470e-04&	1.9900 \\
	&1/32&	2.6328e-04	&2.0086 &	1.6752e-04	&2.0099 \\
	&1/64	&6.3658e-05	&2.0482 &	4.0472e-05&	2.0493 \\
	&1/128	&1.3737e-05&	2.2123 	&8.7084e-06	&2.2164 \\
\hline
\end{tabular}
\label{tab5}
\end{table}
\begin{table}[h]
\caption{$L_2$-norm and maximum norm error behavior versus
grid size reduction when $\alpha = 0.6$, $\beta = 1.8$ and $\tau = h$ in Example 2.}
\centering
\begin{tabular}{cccccc}
\hline
$\theta$ & $\tau$ & $ \left \| \xi \right \|_{\mathcal{C}(\bar{\omega}_{h \tau})}$
& CO in $ \left \| \cdot \right \|_{\mathcal{C}(\bar{\omega}_{h \tau})} $
& max$_{0 \leq n \leq M} \left \| \xi^n \right \|_0$ & CO in $\left \| \cdot \right \|_0$ \\
\hline
0.10 	&1/20	&5.2772e-03	&--&	3.6233e-03&-- \\
	&1/40	&1.2913e-03	&2.0309 &	8.7967e-04	&2.0423 \\
	&1/80	&3.1470e-04	&2.0368 &	2.1331e-04	&2.0440 \\
&	1/160	&7.6674e-05&	2.0372 &	5.1742e-05&	2.0435 \\
	&1/320	&1.8683e-05&	2.0370& 	1.2566e-05	&2.0418 \\
\hline
0.50 &	1/20&	4.8930e-03	&--&	3.3744e-03	&-- \\
	&1/40	&1.1943e-03	&2.0346 	&8.1793e-04	&2.0446 \\
&	1/80	&2.9056e-04	&2.0393 &	1.9797e-04&	2.0467 \\
	&1/160&	7.0647e-05	&2.0401 	&4.7934e-05	&2.0462 \\
&	1/320	&1.7181e-05	&2.0398 &	1.1621e-05	&2.0443 \\
\hline
0.90 &	1/20	&4.6652e-03&--&	3.2255e-03	&-- \\
	&1/40	&1.1378e-03	&2.0357 	&7.8171e-04	&2.0448 \\
	&1/80&	2.7670e-04&	2.0399& 	1.8908e-04	&2.0476 \\
	&1/160	&6.7216e-05	&2.0414 &	4.5748e-05	&2.0472 \\
	&1/320	&1.6330e-05	&2.0413 &	1.1083e-05	&2.0454 \\
\hline
0.99	&1/20&	4.6390e-03	&--&	3.2055e-03&-- \\
	&1/40	&1.1310e-03	&2.0362 	&7.7718e-04	&2.0442 \\
&	1/80	&2.7508e-04	&2.0397 &	1.8801e-04	&2.0474 \\
&	1/160&	6.6816e-05&	2.0416 &	4.5488e-05	&2.0473 \\
&	1/320	&1.6231e-05	&2.0414& 	1.1020e-05	&2.0454 \\
\hline
\end{tabular}
\label{tab6}
\end{table}
\begin{figure}[!htbp]
\centering
\includegraphics[width=3.12in,height=3.0in]{table5.eps}
\includegraphics[width=3.12in,height=3.0in]{table6.eps}
\caption{Comparison the order of accuracy obtained by our proposed IDS for Example 2 in
space and time variables. Left: time direction; Right: space direction.}
\label{fig:3}
\end{figure}
\begin{table}[h]
\caption{$L_2$-norm and maximum norm error behavior versus
$\tau$-grid size reduction when $\alpha = 0.99$, $\beta = 1.99$ and $h = 1/800$ in Example 2.}
\centering
\begin{tabular}{cccccc}
\hline
$\theta$ & $\tau$ & $ \left \| \xi \right \|_{\mathcal{C}(\bar{\omega}_{h \tau})}$
& CO in $ \left \| \cdot \right \|_{\mathcal{C}(\bar{\omega}_{h \tau})} $
& max$_{0 \leq n \leq M} \left \| \xi^n \right \|_0$ & CO in $\left \| \cdot \right \|_0$ \\
\hline
0.10 &	1/8	&6.7919e-04&--&	4.3233e-04	&-- \\
	&1/16	&1.7333e-04	&1.9703& 	1.1015e-04	&1.9727 \\
	&1/32&	4.1678e-05&	2.0562& 	2.6300e-05	&2.0663 \\
	&1/64	&8.0974e-06	&2.3638& 	4.9278e-06	&2.4160 \\
	&1/128&	9.5365e-07&	3.0859& 	6.4747e-07	&2.9281 \\
\hline
0.50 &	1/8&	2.8857e-03	&--&	1.8374e-03	&-- \\
	&1/16&	7.3455e-04&	1.9740& 	4.6750e-04	&1.9746 \\
	&1/32&	1.8305e-04	&2.0046 &	1.1631e-04	&2.0070 \\
	&1/64&	4.3571e-05	&2.0708& 	2.7500e-05&	2.0805 \\
	&1/128&	8.5104e-06&	2.3561 &	5.1893e-06&	2.4058 \\
\hline
0.90 &	1/8&	4.1053e-03	&--&	2.6172e-03	&-- \\
	&1/16&	1.0367e-03&	1.9855 	&6.5998e-04&	1.9875 \\
	&1/32&	2.5752e-04&	2.0092 	&1.6373e-04&	2.0111 \\
	&1/64&	6.2046e-05&	2.0533 	&3.9263e-05&	2.0601 \\
	&1/128&	1.3122e-05&	2.2414 	&8.1204e-06	&2.2735 \\
\hline
0.99 &	1/8&	4.1843e-03	&--&	2.6789e-03	& -- \\
	&1/16&	1.0619e-03	&1.9783 	&6.7724e-04&	1.9839 \\
	&1/32&	2.6411e-04	&2.0074 	&1.6791e-04	&2.0120 \\
	&1/64&	6.3651e-05	&2.0529 	&4.0288e-05&	2.0593 \\
	&1/128	&1.3524e-05&	2.2347 	&8.3771e-06&	2.2658 \\
\hline
\end{tabular}
\label{tab7}
\end{table}
\begin{table}[h]
\caption{$L_2$-norm and maximum norm error behavior versus
grid size reduction when $\alpha = 0.99$, $\beta = 1.99$ and $\tau = h$ in Example 2.}
\centering
\begin{tabular}{cccccc}
\hline
$\theta$ & $\tau$ & $ \left \| \xi \right \|_{\mathcal{C}(\bar{\omega}_{h \tau})}$
& CO in $ \left \| \cdot \right \|_{\mathcal{C}(\bar{\omega}_{h \tau})} $
& max$_{0 \leq n \leq M} \left \| \xi^n \right \|_0$ & CO in $\left \| \cdot \right \|_0$ \\
\hline
0.10 &	1/20&	5.1023e-03	&--&	3.7233e-03	&-- \\
	&1/40&	1.2730e-03&	2.0029& 	9.2852e-04	&2.0036 \\
	&1/80&	3.1774e-04	&2.0023 &	2.3151e-04	&2.0039 \\
	&1/160	&7.9261e-05&	2.0032 &	5.7718e-05	&2.0040 \\
	&1/320	&1.9772e-05&	2.0032& 	1.4389e-05	&2.0041 \\
\hline
0.50 	&1/20&	4.7154e-03	&--&	3.4794e-03	&-- \\
	&1/40&	1.1767e-03	&2.0026 	&8.6769e-04	&2.0036 \\
	&1/80&	2.9361e-04	&2.0028 	&2.1629e-04	&2.0042 \\
&1/160	&7.3226e-05&	2.0035 	&5.3912e-05&	2.0043 \\
	&1/320&	1.8263e-05&	2.0034 	&1.3438e-05&	2.0043 \\
\hline
0.90 &	1/20&	4.4905e-03	&--&	3.3363e-03	&-- \\
	&1/40&	1.1217e-03	&2.0012 	&8.3257e-04&	2.0026 \\
	&1/80&	2.7992e-04	&2.0026 	&2.0758e-04&	2.0039 \\
	&1/160&	6.9818e-05&	2.0033 	&5.1743e-05&	2.0042 \\
	&1/320&	1.7413e-05&	2.0034 	&1.2897e-05	&2.0043 \\
\hline
0.99 &	1/20&	4.4668e-03	&--&	3.3193e-03	&-- \\
	&1/40&	1.1155e-03&	2.0016 	&8.2848e-04	&2.0023 \\
&	1/80&	2.7838e-04&	2.0026 	&2.0657e-04	&2.0038 \\
	&1/160&	6.9435e-05&	2.0033 	&5.1492e-05&	2.0042 \\
	&1/320&	1.7317e-05&	2.0035 	&1.2834e-05&	2.0044 \\
\hline
\end{tabular}
\label{tab8}
\end{table}
\begin{figure}[H]
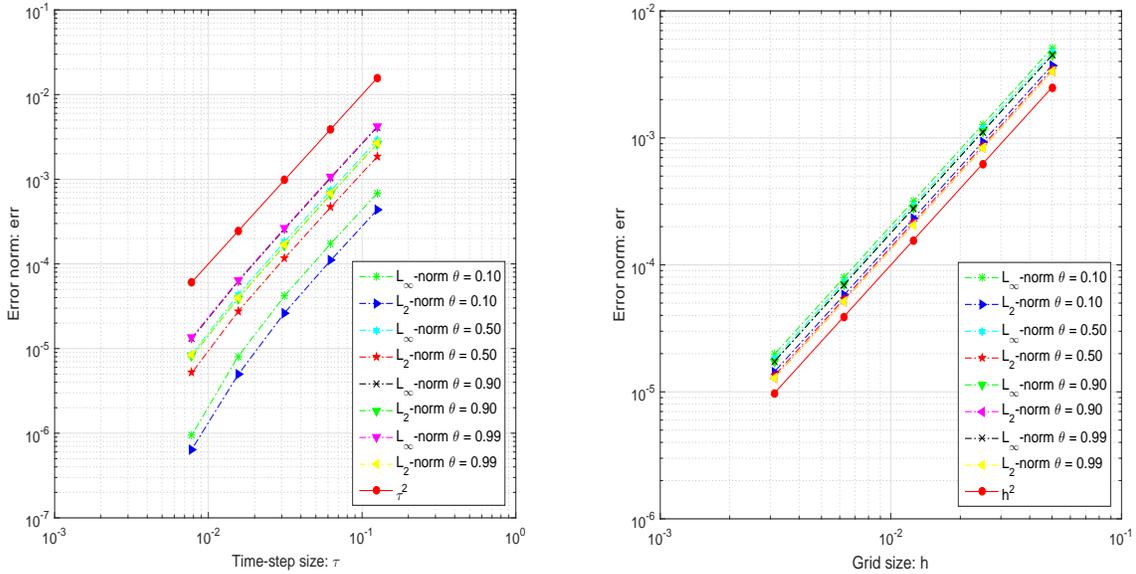

\centering
\includegraphics[width=3.12in,height=3.0in]{table7.eps}
\includegraphics[width=3.12in,height=3.0in]{table8.eps}
\caption{Comparison the order of accuracy obtained by our proposed IDS for Example 2 in
space and time variables. Left: time direction; Right: space direction.}
\label{fig:4}
\end{figure}

\subsection{Fast implementations}
\label{sec4.2} \quad\

\noindent{\textbf{Example 3.}} In this example, we consider the case of linear situation (\ref{eq1.1}),
and the exact solution is given in Example 1.
In MATLAB code, we take the commands of generating matrices as the start point of \texttt{Time1},
and in the rest tables,
``\texttt{Speed-up}'' denotes ${\frac{\texttt{Time1}}{\texttt{Time}k}~(k = 2,3)}$
and ``\texttt{Iter}'' is the average number of iterations required for solving the TSADE problem,
namely,
\begin{equation*}
\texttt{Iter} = \frac{1}{M}
\sum\limits_{m = 1}^{M} \texttt{Iter}(m),
\end{equation*}
where $\texttt{Iter}(m)$ represents the number of iterations required for solving (\ref{eq3.1}).
As for GPBiCOR($m$, $\ell$) method \cite{gu2015bicorstab} with circulant preconditioners in this example,
a large number of experiments reveal that the results are better than others,
when $m = 3$ and $\ell = 1$. Some eigenvalue plots about both
original and preconditioned matrices are drawn in Figs. \ref{fig:5}-\ref{fig:6}.
These two figures confirm that for circulant preconditioning,
the eigenvalues of preconditioned matrices are clustered at 1 except for a few of them.
That is to say, the vast majority of the eigenvalues are well separated away from 0.
We validate the effectiveness and robustness of the designed circulant preconditioner from the
respective of clustering spectrum distribution.

The numerical results in Example 3 are presented in Tables \ref{tab9}-\ref{tab10},
reflecting that the average iterations of the two Krylov subspace methods
with circulant preconditioners have little difference.
In other words, the number of average iterations of these methods is about 6.6.
On the other hand, the CPU time by the two Krylov subspace methods
with circulant preconditioners are much less than
that by LU factorization method as $M$ and $N$ become large.
When $M = N = 2^{10}$ in Table \ref{tab9}, the CPU time is about 18.5 seconds,
and the \texttt{Speed-up} is almost 7.4, especially for PBiCGSTAB method,
the \texttt{Speed-up} can reach about 8.21. At the same case,
in Table \ref{tab10} the CPU time is about 18.7 seconds,
and the \texttt{Speed-up} is almost 7.7.
Meanwhile, although \texttt{Time1} required by LU factorization method
for small test problems ($M = N = 2^{6},2^{7}$) is the cheapest
among other two methods (i.e. PBiCGSTAB and PGPBiCOR($m$, $\ell$)),
our proposed methods are still more attractive in aspects of the lower memory requirement.
\begin{table}[H]\small\tabcolsep=0.4cm
\begin{center}
\caption{{\small {CPU time in seconds for solving Example 1 with $\theta = 0.9$,
$\alpha = 0.8$, $\beta = 1.9$.}}}
\begin{tabular}{cccccc}
\hline  & & \multicolumn{2}{c}{$\rm{PBiCGSTAB}$} & \multicolumn{2}{c}{$\rm{PGPBiCOR(3,1)}$} \\
[-2pt] \cmidrule(lr){3-4} \cmidrule(lr){5-6} \\ [-11pt]
$h = \tau$ & \texttt{Time1} & \texttt{Time2(Iter)} & $\texttt{Speed-up}$
& \texttt{Time3(Iter)} & $\texttt{Speed-up}$  \\
\hline
$2^{-6}$ & 0.031 & 0.161(6.9) & 0.19  & 0.213(5.6) & 0.14  \\
$2^{-7}$ & 0.145 & 0.488(7.0) & 0.30 & 0.612(6.1) & 0.24  \\
$2^{-8}$ & 1.624 & 1.217(7.0) & 1.33 &1.500(6.3) & 1.08  \\
$2^{-9}$ &17.375 & 4.828(7.0) & 3.60 & 5.592(6.5) & 3.11  \\
$2^{-10}$ & 143.968 & 17.544(7.0) & 8.21 &19.393(6.7) & 7.42 \\
\hline
\end{tabular}
\label{tab9}
\end{center}
\end{table}
\begin{figure}[H]
\centering
\includegraphics[width=3.12in,height=3.0in]{table90.eps}
\includegraphics[width=3.12in,height=3.0in]{table91.eps}
\caption{Spectrum of both original and preconditioned matrices,
when $M = N = 2^6$, $\theta = 0.9$,
$\alpha = 0.8$, $\beta = 1.9$.
Left: Time level $j = 0$; Right: Time level $j = 1$.}
\label{fig:5}
\end{figure}
\begin{table}[H]\small\tabcolsep=0.4cm
\begin{center}
\caption{{\small {CPU time in seconds for solving Example 1 with $\theta = 0.7$,
$\alpha = 0.6$, $\beta = 1.5$.}}}
\begin{tabular}{cccccc}
\hline  & & \multicolumn{2}{c}{$\rm{PBiCGSTAB}$}& \multicolumn{2}{c}{$\rm{PGPBiCOR(3,1)}$} \\
[-2pt] \cmidrule(lr){3-4} \cmidrule(lr){5-6} \\ [-11pt]
$h = \tau$ & \texttt{Time1} & \texttt{Time2(Iter)} & $\texttt{Speed-up}$
& \texttt{Time3(Iter)} & $\texttt{Speed-up}$ \\
\hline
$2^{-6}$ & 0.031 & 0.167(6.8) & 0.18 & 0.154(6.0) & 0.20  \\
$2^{-7}$ & 0.149 & 0.509(7.0) & 0.29 &0.485(6.3)  & 0.31  \\
$2^{-8}$ & 1.701 & 1.335(7.8) & 1.27 &1.281(7.2) &  1.33 \\
$2^{-9}$ & 17.251 & 5.325(8.0) & 3.24 &5.275(7.6) &  3.27 \\
$2^{-10}$ & 143.739 & 19.166(8.1) & 7.50 &18.261(7.8) &  7.87 \\
\hline
\end{tabular}
\label{tab10}
\end{center}
\end{table}
\begin{figure}[H]
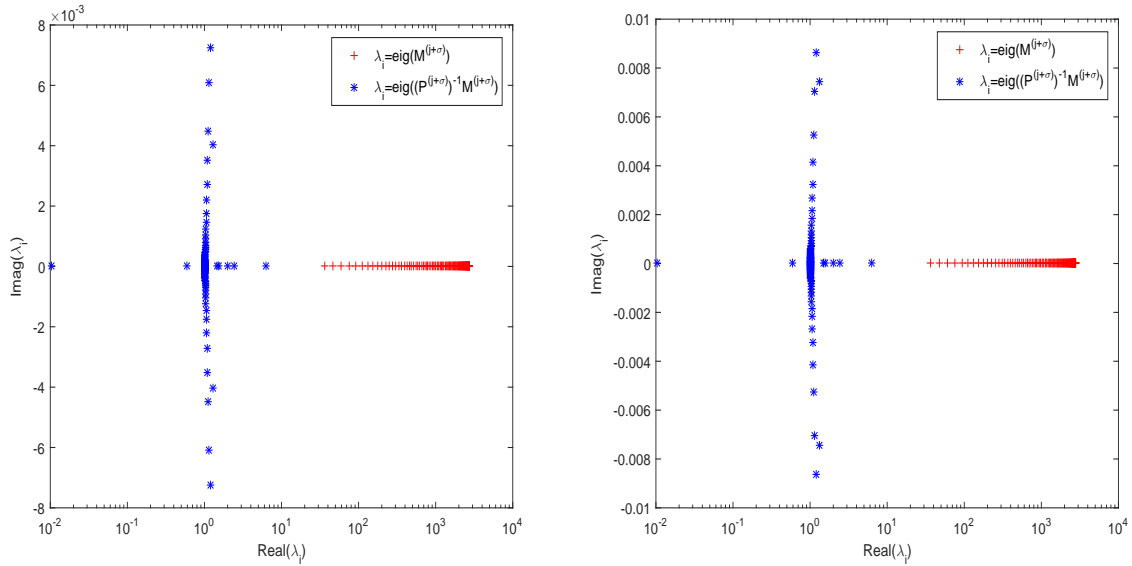

\centering
\includegraphics[width=3.12in,height=3.0in]{table100.eps}
\includegraphics[width=3.12in,height=3.0in]{table101.eps}
\caption{Spectrum of both original and preconditioned matrices,
when $M = N = 2^7$, $\theta = 0.7$,
$\alpha = 0.6$, $\beta = 1.5$.
Left: Time level $j = 0$; Right: Time level $j = 1$.}
\label{fig:6}
\end{figure}

\noindent{\textbf{Example 4.}} In this example, the exact solution and nonlinear source term are given in Example 2.
For performing algorithm (\ref{eq3.4})-(\ref{eq3.6}), the outer iteration tolerance is chosen as $10^{-12}$.
As for GPBiCOR($m$, $\ell$) method \cite{gu2015bicorstab} with circulant preconditioners in this example,
a large number of experiments reveal that the results are better than others,
when $m = 3$ and $\ell = 1$. Some eigenvalue plots about both
original and preconditioned matrices are drawn in Figs. \ref{fig:7}-\ref{fig:8}.
These two figures confirm that for circulant preconditioning,
the eigenvalues of preconditioned matrices are clustered at 1 except for a few of them.
That is to say, the vast majority of the eigenvalues are well separated away from 0.
We validate the effectiveness and robustness of the designed circulant preconditioner from the
respective of clustering spectrum distribution.

In Tables \ref{tab11}-\ref{tab12}, ``\texttt{Iter1}" represents the average number of iterations
required for solving the nonlinear problem with algorithm (\ref{eq3.4})-(\ref{eq3.6}), i.e.,
\begin{equation*}
\texttt{Iter1} = \frac{1}{M}\sum\limits_{m = 1}^{M} \texttt{Iter1}(m),
\end{equation*}
in which $\texttt{Iter1}(m)$ is the number of outer iteration required to solve (\ref{eq3.4}),
and ``\texttt{Iter2}" denotes the average iterative number
of a preconditioned Krylov subspace method required to solve (\ref{eq3.4}), namely,
\begin{equation*}
 \texttt{Iter2} =  \frac{1}{M}\sum\limits_{m = 1}^{M}
 \frac{1}{\texttt{Iter1}(m)} \sum\limits_{k = 1}^{\texttt{Iter1}(m)} \texttt{Iter2}(k)
\end{equation*}
in which $\texttt{Iter2}(k)$ is the iterative number of the preconditioned Krylov subspace method
required at $k$th outer iteration.
In view of the overall, the CPU time by the two Krylov subspace methods
with circulant preconditioners are much less than
that by LU factorization method as $M$ and $N$ become large.
When $M = N = 2^{10}$ in Table \ref{tab11}, the CPU time is about 41.8 seconds,
and the \texttt{Speed-up} is almost 3.8. At the same case,
Table \ref{tab12} indicates that the CPU time is about 52.3 seconds,
and the \texttt{Speed-up} is almost 3.2.
Meanwhile, although \texttt{Time1} required by LU factorization method
for small test problems ($M = N = 2^{6},2^{7},2^{8}$) is the cheapest
among other two methods (i.e. PBiCGSTAB and PGPBiCOR($m$, $\ell$)),
our proposed methods are still more attractive in aspects of the lower memory requirement.
As a conclusion, our proposed IDS with fast solution techniques is still more practical than LU factorization method.
\begin{table}[h]\small\tabcolsep=5.3pt
\begin{center}
\caption{{\small {CPU time in seconds for solving Example 2 with $\theta = 0.9$,
$\alpha = 0.8$, $\beta = 1.9$.}}}
\begin{tabular}{cccccc}
\hline  & & \multicolumn{2}{c}{$\rm{PBiCGSTAB}$} & \multicolumn{2}{c}{$\rm{PGPBiCOR(3,1)}$} \\
[-2pt] \cmidrule(lr){3-4} \cmidrule(lr){5-6}\\ [-11pt]
$h = \tau$ & \texttt{Time1(Iter1)} & \texttt{Time2(Iter1, Iter2)} & $\texttt{Speed-up}$
& \texttt{Time3(Iter1,Iter2)} & $\texttt{Speed-up}$ \\
\hline
$2^{-6}$&0.038(5.0) &0.470(5.0, 7.0)&0.08&0.482(5.0, 6.0)&0.08 \\
$2^{-7}$&0.164(4.1)&1.414(4.1, 7.0)&0.12&1.420(4.1, 6.0)&0.12 \\
$2^{-8}$&1.922(4.0)&3.325(4.0, 7.0)&0.58&3.350(4.0, 6.0)&0.57 \\
$2^{-9}$&19.122(4.0)&13.923(4.0, 7.0)&1.37&13.879(4.0, 6.0)&1.38 \\
$2^{-10}$&159.561(3.5)&42.497(3.5, 7.4)&3.75&41.005(3.5, 6.0)&3.89\\
\hline
\end{tabular}
\label{tab11}
\end{center}
\end{table}
\begin{figure}[H]
\centering
\includegraphics[width=3.12in,height=3.0in]{table110.eps}
\includegraphics[width=3.12in,height=3.0in]{table111.eps}
\caption{Spectrum of both original and preconditioned matrices,
when $M = N = 2^6$, $\theta = 0.9$,
$\alpha = 0.8$, $\beta = 1.9$.
Left: Time level $j = 0$; Right: Time level $j = 1$.}
\label{fig:7}
\end{figure}
\begin{table}[h]\small\tabcolsep=5.3pt
\begin{center}
\caption{{\small {CPU time in seconds for solving Example 2 with $\theta = 0.7$,
$\alpha = 0.6$, $\beta = 1.7$.}}}
\begin{tabular}{cccccc}
\hline  & & \multicolumn{2}{c}{$\rm{PBiCGSTAB}$} & \multicolumn{2}{c}{$\rm{PGPBiCOR(3,1)}$} \\
[-2pt] \cmidrule(lr){3-4} \cmidrule(lr){5-6}\\ [-11pt]
$h = \tau$ & \texttt{Time1(Iter1)} & \texttt{Time2(Iter1, Iter2)} & $\texttt{Speed-up}$
& \texttt{Time3(Iter1, Iter2)} & $\texttt{Speed-up}$  \\
\hline
$2^{-6}$&0.037(5.0)&0.480(5.0, 7.0)&0.08&0.476(5.0, 6.0)&0.08 \\
$2^{-7}$&0.171(5.0)&1.666(5.0, 7.0)&0.10&1.866(5.0, 7.0)&0.09 \\
$2^{-8}$&1.802(4.3)&3.975(4.3, 8.1)&0.45&4.022(4.3, 7.0)&0.45 \\
$2^{-9}$&18.950(4.0)&15.604(4.0, 8.0)&1.21&15.873(4.0, 7.2)&1.19 \\
$2^{-10}$&164.387(4.0)&50.313(4.0, 8.0)&3.27&54.279(4.0, 7.8)&3.03 \\
\hline
\end{tabular}
\label{tab12}
\end{center}
\end{table}
\begin{figure}[H]
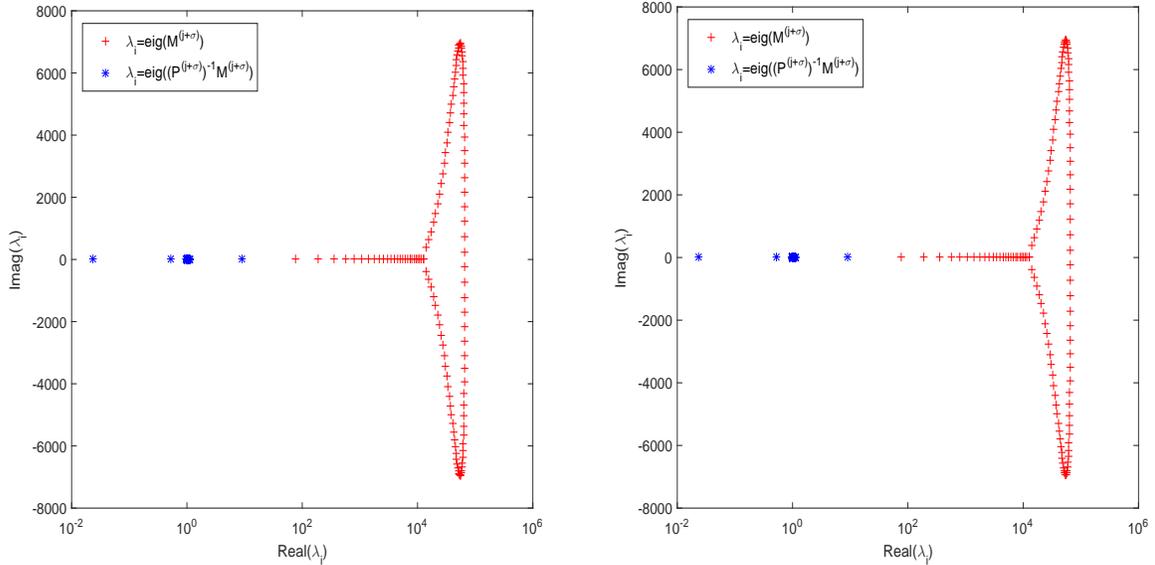

\centering
\includegraphics[width=3.12in,height=3.0in]{table120.eps}
\includegraphics[width=3.12in,height=3.0in]{table121.eps}
\caption{Spectrum of both original and preconditioned matrices,
when $M = N = 2^7$, $\theta = 0.7$,
$\alpha = 0.6$, $\beta = 1.7$.
Left: Time level $j = 0$; Right: Time level $j = 1$.}
\label{fig:8}
\end{figure}

\section{Conclusion}
\label{sec5} \quad \
In this work, we constructed a difference scheme of the second order approximation
in space and time for the linear TSADE with variable coefficients.
Then, we utilized the same numerical approximation technique to
simulate the nonlinear problem of TSADE and the convergence order still was $\mathcal{O}(\tau^2 + h^2)$.
Numerical tests shown that our implicit difference schemes
converge with second-order accuracy in both time and space.
More significantly, we employed two reliable preconditioning iterative techniques,
which have only computational cost and memory of
$\mathcal{O}(N \log N)$ and $\mathcal{O}(N)$ respectively,
to ameliorate the calculation skill.
And numerical experiments strongly support the
efficiency of the two preconditioning methods.
In our future work, we will apply our IDM into
more linear/nonlinear fractional partial differential equations
and make a rigorous theoretical analysis of the stability
for the developed difference schemes.

\section*{Acknowledgments}
\addcontentsline{toc}{section}{Acknowledgments}
\label{sec6} \quad \
\textit{This work is supported by 973 Program (2013CB329404),
NSFC (1370147, 11501085 and 61402082),
the Fundamental Research Funds for the Central Universities (ZYGX2016J132 and ZYGX2016J138),
the Scientific Research Fund of Sichuan Provincial Education Department (15ZA0288).}
\section*{Appendix}
\addcontentsline{toc}{section}{Appendix}
\label{sec7} \quad \
\begin{algorithm}[H]
\caption{GPBiCOR(3,1) for $A x = b$ with preconditioner $K$.}
\label{alg1}
\begin{multicols}{2}
\begin{algorithmic}[1]
\small
\STATE {Select initial guess $\textbf{x}_0$ and $\textbf{r}_0 = \textbf{b} - A \textbf{x}_0$}
\STATE {Choose $\textbf{r}_0^{\ast} = A K^{-1} \textbf{r}_0$ s.t. $(\textbf{r}_0^{\ast},A K^{-1} \textbf{r}_0) \neq 0$. Set $\textbf{t}_{-1} = \textbf{w}_{-1} = \textbf{u}_{-1} = \hat{\textbf{u}}_{-1} = \textbf{0}$, $\beta_{-1} = 0$, $\textbf{e}_{0} = K^{-1} \textbf{r}_0$, $\hat{\textbf{e}}_{0} = A \textbf{e}_0$.}
\STATE {\textbf{for} $n = 0, 1, \cdots$, until convergence \textbf{do}}
\STATE\quad  {$\textbf{p}_n = \textbf{e}_n + \beta_{n-1}(\textbf{p}_{n-1} - \textbf{u}_{n-1})$}
\STATE\quad {$\hat{\textbf{p}}_n = \hat{\textbf{e}}_n + \beta_{n-1}(\hat{\textbf{p}}_{n-1} - \hat{\textbf{u}}_{n-1})$}
\STATE\quad {Solve $K \textbf{h}_n = \hat{\textbf{p}}_n$}
\STATE\quad {Compute $\textbf{g}_n = A \textbf{h}_n$ and then $\alpha_n = \frac{(\textbf{r}_0^{\ast},\textbf{e}_n)}
{(\textbf{r}_0^{\ast},\textbf{g}_n)}$}
\STATE\quad {$\textbf{t}_n = \textbf{r}_n - \alpha_n \hat{\textbf{p}}_n$}
\STATE\quad {$\textbf{y}_n = \textbf{t}_{n-1} - \textbf{t}_n - \alpha_n \textbf{q}_{n-1}$}
\STATE\quad {$\textbf{s}_n = \textbf{e}_n - \alpha_n \textbf{h}_n$ \qquad ($\triangleright \textbf{s}_n \triangleq K^{-1} \textbf{t}_n$)}
\STATE\quad {$\textbf{w}_n = \hat{\textbf{e}}_n - \alpha_n \textbf{g}_n$ \qquad($\triangleright \textbf{w}_n \triangleq A \textbf{s}_n$)}
\STATE\quad {\textbf{if} ($\mod(n,4) < 3$ or $n=0$) \textbf{then}}
\STATE\quad\quad {$\xi = \frac{(\textbf{w}_n,\textbf{t}_n)}{(\textbf{w}_n,\textbf{w}_n)}$
~(Hint: $\eta_n = 0$)}
\STATE\quad\quad {$\textbf{u}_n = \xi_n \textbf{h}_n$, $\hat{\textbf{u}}_n = \xi_n \textbf{g}_n$}
\STATE\quad\quad {$\textbf{z}_n = \xi_n \textbf{e}_n - \alpha_n \textbf{u}_n$}
\STATE\quad\quad {$\textbf{r}_{n+1} = \textbf{t}_n - \xi_n \textbf{w}_n$}
\STATE\quad {\textbf{else}}
\STATE\quad\quad {$\xi_n =
\frac{(\textbf{y}_n,\textbf{y}_n)(\textbf{w}_n,\textbf{t}_n) - (\textbf{y}_n,\textbf{t}_n)(\textbf{w}_n,\textbf{y}_n)}
{(\textbf{w}_n,\textbf{w}_n)(\textbf{y}_n,\textbf{y}_n)
- (\textbf{y}_n,\textbf{w}_n)(\textbf{w}_n,\textbf{y}_n)}$}
\STATE\quad\quad {$\eta_n =
\frac{(\textbf{w}_n,\textbf{w}_n)(\textbf{y}_n,\textbf{t}_n) - (\textbf{y}_n,\textbf{w}_n)(\textbf{w}_n,\textbf{t}_n)}
{(\textbf{w}_n,\textbf{w}_n)(\textbf{y}_n,\textbf{y}_n)
- (\textbf{y}_n,\textbf{w}_n)(\textbf{w}_n,\textbf{y}_n)}$}
\STATE\quad\quad {$\textbf{u}_n = \xi_n \textbf{h}_n + \eta_n(\textbf{s}_{n-1} - \textbf{e}_n + \beta_{n-1} \textbf{u}_{n-1})$}
\STATE\quad\quad {$\hat{\textbf{u}}_n = \xi_n \textbf{g}_n + \eta_n(\textbf{w}_{n-1} - \hat{\textbf{e}}_n + \beta_{n-1} \hat{\textbf{u}}_{n-1})$}
\STATE\quad\quad {$\textbf{z}_n = \xi_n \textbf{e}_n + \eta_n \textbf{z}_{n-1} - \alpha_n \textbf{u}_n$}
\STATE\quad\quad {$\textbf{r}_{n+1} = \textbf{t}_n - \eta_n \textbf{y}_n - \xi_n \textbf{w}_n$}
\STATE\quad {\textbf{endif}}
\STATE\quad {Solve $K \textbf{e}_{n+1} = \textbf{r}_{n+1}$}
\STATE\quad {Compute $\hat{\textbf{e}}_{n+1} = A \textbf{e}_{n+1}$}
\STATE\quad {$\beta_n = \frac{\alpha_n}{\xi_n} \cdot \frac{(\textbf{r}_0^{\ast}, \hat{\textbf{e}}_{n+1})}
{(\textbf{r}_0^{\ast}, \hat{\textbf{e}}_{n})}$}
\STATE\quad {$\textbf{x}_{n+1} = \textbf{x}_n + \alpha_n \textbf{p}_n + \textbf{z}_n$}
\STATE\quad {$\textbf{q}_n = \textbf{w}_n + \beta_n \hat{\textbf{p}}_n$}
\STATE {\textbf{end for}}
\end{algorithmic}
\end{multicols}
\end{algorithm}
\bibliography{reference}
\addcontentsline{toc}{section}{References}
{\small}
\end{document}